\definecolor{darkgreen}{rgb}{0.00,0.50,0.10}
\definecolor{lightgreen}{rgb}{0.20,0.70,0.30}
\newtheorem{theorem}                   {Theorem}
\newtheorem{lemma}           [theorem] {Lemma}
\newtheorem{claim}            {Claim} 
\newtheorem{fact}			 [theorem]{Fact}		
\newtheorem{definition}      [theorem] {Definition} 
\newtheorem{example}         [theorem] {Example} 
\newtheorem{conjecture}      [theorem] {Conjecture}
\newtheorem{remark}          [theorem] {Remark}
\newtheorem{setup}           [theorem] {Setup}
\newcommand{\reminder}[1]{\marginpar{\textcolor{red}{#1}}}
\let\subset\subseteq
\let\epsilon\varepsilon
\let\eps\varepsilon
\let\rho\varrho
\def\Pc{\eta} 
\def\dbigcup{\dot\bigcup}
\def\maxdeg{\Delta}
\newcommand{\JUSTIFY}[1]{\fbox{\tiny{#1}}\quad}
\newcommand{\By}[2]{\overset{\mbox{\tiny{#1}}}{#2}}
\newcommand{\ByRef}[2]{   \By{\eqref{#1}}{#2} }
\newcommand{\eqBy}[1]{    \By{#1}{=} }
\newcommand{\leBy}[1]{    \By{#1}{\le} }
\newcommand{\eqByRef}[1]{ \ByRef{#1}{=} }
\newcommand{\leByRef}[1]{ \ByRef{#1}{\le} }
\newcommand{\geByRef}[1]{ \ByRef{#1}{\ge} }
\newcommand{\Exp}{\mathbf{E}} 
\newcommand{\Prob}{\mathbf{P}} 
\newcommand{\bbN}{\mathbb{N}} 
\newcommand{\bbZ}{\mathbb{Z}} 
\newcommand{\bbA}{\mathbb{A}} 
\newcommand{\bbC}{\mathbb{B}} 
\newcommand{\setVert}{\mathsf{A}}
\newcommand{\setEdge}{\mathsf{B}}
\newcommand{\cI}{\mathcal{I}_{\mathsf V}}
\newcommand{\iI}{I}
\newcommand{\cIe}{\mathcal{I}_{\mathsf E}}
\newcommand{\iIe}{I_{\mathsf E}}
\newcommand{\cJ}{\mathcal{J}}
\newcommand{\iJ}{J}
\newcommand{\cX}{\mathcal{X}}
\newcommand{\Xone}[1]{X_1\ifthenelse{\equal{#1}{none}}{}{\left\llbracket #1 \right\rrbracket}} 
\newcommand{\Xtwo}[1]{X_2\ifthenelse{\equal{#1}{none}}{}{\left\llbracket #1 \right\rrbracket}} 
\newcommand{\Xthree}[1]{X_3\ifthenelse{\equal{#1}{none}}{}{\left\llbracket #1 \right\rrbracket}} 
\newcommand{\Xfour}[1]{X_4\ifthenelse{\equal{#1}{none}}{}{\left\llbracket #1 \right\rrbracket}} 
\newcommand{\ind}{\mathbbm{1}}
\newcommand{\chosenv}[2]{\mathrm{Ch_{\mathsf{V}}}(#1;#2)}
\newcommand{\chosene}[2]{\mathrm{Ch_{\mathsf{E}}}(#1;#2)}
\newcommand{\el}[1]{\mathfrak{Q}(#1)}
\newcommand{\ed}{\mathfrak{e}}
\newcommand{\cF}{\mathcal{F}}
\newcommand{\cE}{\mathcal{E}}
\newcommand{\hist}{\mathscr{H}}
\newcommand{\Vh}{\mathcal{D}_h}
\newcommand{\Corv}{\mathrm{Cor_{\mathsf V}}} 
\newcommand{\Core}{\mathrm{Cor_{\mathsf E}}} 
\newcommand{\diff}{\mathrm{Diff}}
\newcommand{\freev}{\mathrm{Free_{\mathsf V}}}
\newcommand{\freee}{\mathrm{Free_{\mathsf E}}}
\newcommand{\free}{\mathrm{free}}
\newcommand{\adm}{\mathrm{Adm}} 
\newcommand{\prt}[1]{{#1^+}}
\newcommand{\oldqed}{}
\def\endofClaim{\hfill\scalebox{.6}{$\Box$}}
\newenvironment{claimproof}[1][Proof]{
  \renewcommand{\oldqed}{\qedsymbol}
  \renewcommand{\qedsymbol}{\endofClaim}
  \begin{proof}[#1]
}{
  \end{proof}
  \renewcommand{\qedsymbol}{\oldqed}
}
\begin{document}

\title{Almost all trees are almost graceful}
\author[A. Adamaszek]{Anna Adamaszek}\address{\emph{Affiliation at time of submission:} Department of Computer Science, University of Copenhagen, Universitetsparken~5, 2100 Copenhagen, Denmark.}
\email{a.m.adamaszek@gmail.com}
\author[P. Allen]{Peter Allen}
\address{Department of Mathematics, London School of Economics, Houghton Street, London, WC2A~2AE, UK.}
\email{p.d.allen@lse.ac.uk}
\author[C. Grosu]{Codru\unichar{355} Grosu}
\address{\emph{Affiliation at time of submission:} Freie Universit\"at Berlin, Germany.} 
\email{grosu.codrut@gmail.com}
\author[J. Hladk\'y]{Jan Hladk\'y}
\address{Institute of Mathematics of the Czech Academy of Sciences, \v Zitn\'a 25, Praha.} \email{hladky@math.cas.cz}
\thanks{{\it AA:} Most of the work was done while the author was at Max-Planck-Institut f\"ur Informatik, Saarbr\"ucken, Germany supported by the Humboldt Foundation. 
{\it CG:} This research was supported by the Deutsche Forschungsgemeinschaft within the research training group `Methods for Discrete Structures' (GRK 1408). {\it JH:} The research leading to these results has received funding from the People Programme (Marie Curie Actions) of the European Union's Seventh Framework Programme (FP7/2007-2013) under REA grant agreement number 628974.
The Institute of Mathematics is supported by RVO:67985840}

\begin{abstract}
The Graceful Tree Conjecture of Rosa from 1967 asserts that the vertices of each tree $T$ of order $n$ can be injectively labelled by using the numbers $\{1,2,\ldots,n\}$ in such a way that the absolute differences induced on the edges are pairwise distinct.

We prove the following relaxation of the conjecture for each $\gamma>0$ and for all $n>n_0(\gamma)$. Suppose that \emph{(i)} the maximum degree of $T$ is bounded by $O_\gamma(n/\log n$), and \emph{(ii)} the vertex labels are chosen from the set $\{1,2,\ldots,\lceil (1+\gamma)n\rceil\}$. Then there is an injective labelling of $V(T)$ such that the absolute differences on the edges are pairwise distinct. In particular, asymptotically almost all trees on $n$ vertices admit such a labelling.

As a consequence, for any such tree $T$ we can pack $\lceil(2+2\gamma)n\rceil-1$ copies of $T$ into $K_{\lceil(2+2\gamma)n\rceil-1}$ cyclically. This proves an approximate version of the Ringel--Kotzig conjecture (which asserts the existence of a cyclic packing of $2n-1$ copies of any $T$ into $K_{2n-1}$) for these trees.

The proof proceeds by showing that a certain very natural randomized algorithm produces a desired labelling with high probability.
\end{abstract}
\maketitle


\section{Introduction}

\subsection{Graceful labelling}

Let $G$ be a graph with $n$ vertices and $q$ edges.
A \emph{vertex labelling} of $G$ is an assignment of natural numbers to the vertices of $G$, subject to certain conditions. A vertex labelling $f$ is called \emph{graceful} if
$f$ is an injection from $V(G)$ to the set $\{1,\ldots,q+1\}$ such that, if each edge $xy \in E(G)$ is assigned the \emph{induced} label $|f(x)-f(y)|$, then the resulting edge labels are distinct. The graph $G$ is called \emph{graceful} if it admits a graceful labelling.

Graceful labellings were first introduced by Rosa~\cite{Rosa66} under the name of \emph{$\beta$-valuations}. 
It was Golomb \cite{Golomb72} who used the term \emph{graceful} for the first time.

A natural problem associated with graceful labellings is to determine which graphs are graceful. According to an unpublished result of Erd\H os almost all graphs are not graceful. A version of this argument for a very similar concept of the so-called harmonious labellings, which we introduce in detail in Section~\ref{ssec:harmonious}, was later recorded by Graham and Sloane \cite{GrahamSloane80}. It was shown by Rosa \cite{Rosa66} that a graph with every vertex of even degree and number of edges congruent to $1$ or $2$ (mod $4$) is not graceful. Nevertheless, it appears that many graphs that exhibit some regularity in structure are graceful. For example, paths $P_n$ and wheels $W_n$ are graceful, \cite{Rosa66}, \cite{Frucht79}. A comprehensive survey on the current status of knowledge on graceful graphs can be found in \cite{Gallian09}.

The most important problem in the area is to determine whether every tree is graceful. Conjectured first by Rosa in $1967$, the problem remains wide open.

\begin{conjecture}[Graceful Tree Conjecture]
\label{conj:GTLC}
For any $n$-vertex tree $T$ there exists an injective
labelling $\psi:V(T) \rightarrow \{1,\ldots,n\}$ that induces pairwise
distinct labels on the edges of~$T$.
\end{conjecture}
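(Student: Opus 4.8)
The plan is to attack Conjecture~\ref{conj:GTLC} constructively, by exhibiting an explicit (possibly randomized) procedure that builds the labelling $\psi$ one vertex at a time. First I would fix a root $r$ of $T$ and order the vertices $v_1=r,v_2,\ldots,v_n$ so that every $v_i$ with $i\ge 2$ has exactly one neighbour among $\{v_1,\ldots,v_{i-1}\}$, namely its parent $p(v_i)$; such an order exists because $T$ is a tree (a breadth-first order will do). Labelling $T$ gracefully is then equivalent to choosing, for each $i\ge 2$, an edge label $d_i\in\{1,\ldots,n-1\}$ — the induced value $|\psi(v_i)-\psi(p(v_i))|$ — so that \emph{(a)} the $d_i$ form a permutation of $\{1,\ldots,n-1\}$, \emph{(b)} the resulting vertex label $\psi(v_i)=\psi(p(v_i))\pm d_i$ lands in $\{1,\ldots,n\}$, and \emph{(c)} the vertex labels are pairwise distinct. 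Condition \emph{(a)} alone is just a perfect matching in the bipartite graph between $\{v_2,\ldots,v_n\}$ and $\{1,\ldots,n-1\}$; the whole difficulty lies in enforcing \emph{(b)} and \emph{(c)} simultaneously, since these couple the choices across the tree.

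Next I would analyse a natural random instance of this procedure: process $v_2,\ldots,v_n$ in the chosen order, and for each $v_i$ pick $\psi(v_i)$ uniformly at random among those values in $\{1,\ldots,n\}$ that are still unused and whose induced difference to $\psi(p(v_i))$ has not yet occurred. The hope is to show, via a union bound together with a second-moment or differential-equation tracking of the number of surviving (label, difference) pairs, that with positive probability the procedure never gets stuck and, at the very end, the last vertex can still be placed. The heart of such an argument is a \emph{deficiency} estimate: after $t$ vertices have been labelled, one wants that the bipartite compatibility graph between the $n-t$ unlabelled vertices and the $n-t$ unused labels still satisfies Hall's condition with room to spare, so that some completion exists; then one only needs the randomized early choices to preserve this robustness. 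The tools would be Hall's theorem (in its defect form), Chernoff/Azuma concentration for the counts, and an entropy-compression or Lov\'asz-Local-Lemma style control of the rare bad events localised around high-degree vertices and around the extreme labels $1$ and $n$.

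The step I expect to be the genuine obstacle — and the reason the conjecture is still open — is the \emph{endgame}, when only a few labels in $\{1,\ldots,n\}$ remain. With the palette of size exactly $n=q+1$ there is no slack: a single poorly-placed early label can force the last handful of differences to be unrealisable, and neither a union bound nor a Hall-type estimate has the spare capacity to absorb this. Every known technique sidesteps precisely this difficulty rather than resolving it — one either enlarges the palette to $\{1,\ldots,\lceil(1+\gamma)n\rceil\}$ so that the compatibility graph stays far from tight, or bounds the maximum degree so that the local structure is simple, or restricts to highly structured trees (caterpillars, spiders, symmetrical trees) where an explicit labelling is available. To close the gap I see two possible routes, both of which I expect to be hard: build inside a general tree an \emph{absorbing} substructure robust enough to repair the last $o(n)$ choices (but trees, being sparse, seem too flexible to host such a gadget), or produce an algebraic certificate — e.g. via the Combinatorial Nullstellensatz, a polynomial in the $\psi(v_i)$ whose top-degree coefficient provably does not vanish for every tree shape. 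Extracting the nonvanishing of that coefficient uniformly over all trees is, as far as I can tell, the real crux, and it is not reached by any of the probabilistic or extremal methods above.
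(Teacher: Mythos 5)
You have not given a proof: the statement you were asked about is Conjecture~\ref{conj:GTLC}, the Graceful Tree Conjecture, which is an open problem, and your text is an attack plan that explicitly stops at an admitted obstacle (the ``endgame'' with palette exactly $\{1,\ldots,n\}$). That missing step is not a technicality --- it is the entire content of the conjecture. The paper itself does not prove Conjecture~\ref{conj:GTLC} either; it proves Theorem~\ref{thm:main}, a relaxation in which the palette is enlarged to $[\lceil(1+\gamma)n\rceil]$ and the maximum degree is bounded by $O_\gamma(n/\log n)$, and it states plainly that the conjecture itself ``remains wide open.'' So the concrete gap in your proposal is exactly the one you name: with only $q+1=n$ labels there is no slack for a greedy or random sequential procedure, Hall-type or concentration estimates have no room to absorb even a single unlucky early choice, and neither you nor the existing literature supplies the absorbing structure or algebraic certificate that would close it. A plan whose crux is declared unresolved cannot be accepted as a proof of the statement.

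It is worth noting that the machinery you sketch for the relaxed regime is essentially the machinery the paper actually uses for Theorem~\ref{thm:main}: order the vertices so each new vertex has one labelled neighbour, label sequentially and randomly among admissible values, and track the evolving sets of unused vertex and edge labels by a differential-equations-method / martingale-concentration analysis (the paper's Lemma~\ref{lem:seqhoeff}, the quasirandomness conditions \ref{quasi:edged}--\ref{quasi:X}, and Algorithm~\ref{alg:label}). One substantive point your sketch misses even for the relaxed problem: choosing each new label \emph{uniformly} among admissible values does not keep the edge-label set quasirandom (small differences get used up much faster than large ones), which is why the paper assigns each vertex a short interval $\iJ(v_i)$, pairs most parent--child intervals as complementary about the midpoint, and adds the correction distributions $\Corv$, $\Core$. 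Without some such device your proposed process would drift away from uniformity and the concentration argument would not close even with the extra $\gamma n$ labels. Your diagnosis of why the exact conjecture resists these methods is accurate, but a correct diagnosis is not a proof.
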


While Conjecture~\ref{conj:GTLC} has attracted a lot of attention, it was proved only for some special classes of trees (paths and caterpillars \cite{Rosa66}, firecrackers \cite{Chen97}, banana trees \cite{Sethuraman09}, olive trees \cite{Pastel78}, trees of diameter at most $7$ \cite{Wang15}, and several others, see Table $4$ in \cite{Gallian09}). The related conjecture of Bermond \cite{Bermond79} that all lobsters are graceful is still open, and has also been the subject of many papers.

Van Bussel~\cite{Bussel02} introduced the following relaxation of gracefulness. A map $\psi:V(G)\rightarrow[m]$ from a vertex set of a graph is \emph{$m$-graceful} if $\psi$ is injective, and the map $\psi_*$ induced on the edges, $\psi_*:E(G)\rightarrow[m-1]$, $\psi_*(xy):=|\psi(x)-\psi(y)|$, is injective as well. Thus, to obtain the original (nonparametric) definition, we take $m=|E(G)|+1$.  If the codomain $[m]$ of an $m$-graceful map is clear from the context, we simply call $\psi$ \textit{graceful}. Van Bussel proved that every tree $T$ admits a $\big(2v(T)-2\big)$-graceful labelling.

\subsection{Tree packings}\label{ssec:treepackings}
The motivation for considering graceful labellings comes from the area of graph packings. A collection $G_1,\dots,G_t$ of graphs \emph{pack} into a graph $H$ if there are embeddings $\psi_1,\dots,\psi_t$ of $G_1\dots,G_t$ into $H$ such that each edge of $H$ is used in at most one embedding. If in addition each edge of $H$ is used in some embedding, we say that $G_1,\dots,G_t$ \emph{decompose} $H$.

There are several open problems in the area of graph packings. These are special cases of the following general meta-conjecture: if $G_1,\dots,G_t$ are drawn from a family of `sparse' graphs, and $H$ is `dense', then provided some `simple' necessary conditions are satisfied, $G_1,\dots,G_t$ pack into $H$.

In particular, we can consider the (sparse) family of trees and the (dense) complete graph. Even here there are several incarnations of the meta-conjecture. The one which will concern us is Ringel's conjecture from 1963.

\begin{conjecture}[Ringel's conjecture, \cite{Ringel63}]
\label{conj:ringel}
For any $(n+1)$-vertex tree $T$ the complete graph $K_{2n+1}$ can be decomposed into $2n+1$ edge-disjoint subgraphs isomorphic to $T$.
\end{conjecture}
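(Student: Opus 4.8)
\medskip

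\noindent\emph{Proof proposal.}
The plan is to reduce the decomposition problem to a labelling problem of the kind studied in this paper, and then to solve that labelling problem by a randomised embedding followed by an absorption step. First recall Rosa's classical observation: if an $(n+1)$-vertex tree $T$ admits a \emph{$\rho$-labelling}, i.e.\ an injection $f\colon V(T)\to\bbZ_{2n+1}$ for which the induced edge lengths $\big\{\min\big(|f(x)-f(y)|,\,2n+1-|f(x)-f(y)|\big):xy\in E(T)\big\}$ are precisely $\{1,2,\dots,n\}$, then the $2n+1$ cyclic translates $f(T)+i$ ($i\in\bbZ_{2n+1}$) are pairwise edge-disjoint and hence decompose $K_{2n+1}$: there are $(2n+1)n=|E(K_{2n+1})|$ translated edges in all, and two of them coincide only if they arise from edges of $T$ of equal length, which by the $\rho$-labelling property forces the same translate of the same edge. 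Thus it suffices to show that every tree has a $\rho$-labelling --- a statement implied by, but strictly weaker than, Conjecture~\ref{conj:GTLC}. The finitely many small cases are handled by the known special classes, so we may assume $n$ large.

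Next I would isolate the $O(\log n)$ vertices of $T$ of degree exceeding $n/\log n$; call this set $W$. One embeds the bounded-degree forest obtained by deleting $W$, together with a small reserved skeleton joining the vertices of $W$, ``by hand'', pre-committing the labels incident to $W$ and the lengths those edges use. On the remaining bounded-degree part of $T$ I would run essentially the randomised algorithm of the present paper, but with codomain $\bbZ_{2n+1}$: here the effective slack is of order $n$ --- morally $\gamma\approx 1$ in the cyclic world, far more generous than the main theorem needs --- which with high probability yields a partial labelling whose edge lengths are pairwise distinct and omit only a controlled short list $L$ of lengths. It then remains to upgrade ``pairwise distinct'' to ``exactly $\{1,\dots,n\}$''.

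This last step is the crux, and the place where one must go beyond the present paper. The idea is to set aside, before the random embedding, an \emph{absorber}: a flexible substructure of $T$ living on a reserved block of labels, whose internal labelling can be chosen after the fact so as to realise any prescribed short list $L$ of still-missing lengths. Every large tree contains either $\Omega(n/\log n)$ leaves or a bare path of length $\Omega(n/\log n)$, and both a large star-forest (leaves slotted one per missing length into reserved positions) and a long path (routed through a reserved, nearly arithmetic-progression set of positions to pick up a prescribed set of differences) are flexible enough to serve; the work is in conditioning the random embedding to keep the reserved positions and reserved lengths free while still covering everything else, and in checking that the absorber is compatible with the hand-built skeleton around $W$. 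Combining skeleton, bulk, and absorber into one injection realising all of $\{1,\dots,n\}$ then completes the argument. The main obstacle is exactly this exact surjectivity onto the full length spectrum: a randomised process necessarily wastes lengths, and pinning it down on the nose requires an absorber that is simultaneously large enough to catch every conceivable leftover, small enough not to perturb the rest of the argument, and consistent with the high-degree part. A plausible alternative that sidesteps labellings altogether is to abandon cyclicity and build a non-cyclic decomposition of $K_{2n+1}$ by iterative absorption: repeatedly peel off random families of edge-disjoint copies of $T$, each covering almost all the remaining edges while keeping the leftover quasirandom, until a pre-reserved absorbing family of copies finishes the decomposition --- there the obstacle migrates to maintaining, after many rounds, enough pseudorandomness of the leftover graph to still embed edge-disjoint copies of a spread-out tree.
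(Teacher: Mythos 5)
There is a genuine gap --- and, more fundamentally, a mismatch with what the paper actually establishes. The statement you were asked about is Ringel's conjecture, which this paper does \emph{not} prove: it is stated as Conjecture~\ref{conj:ringel}, and the paper only proves the relaxation in Theorem~\ref{thm:main}, namely a graceful labelling into the enlarged set $[\lceil(1+\gamma)n\rceil]$ for trees with $\Delta(T)=O(n/\log n)$, which yields only an \emph{approximate} cyclic packing (of $\lceil(2+2\gamma)n\rceil-1$ copies into $K_{\lceil(2+2\gamma)n\rceil-1}$), not a decomposition of $K_{2n+1}$. Your opening reduction is fine and classical: Rosa's argument that a $\rho$-labelling of an $(n+1)$-vertex tree gives a cyclic decomposition of $K_{2n+1}$ is correct, and $\rho$-labellings are indeed weaker than graceful ones. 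But everything after that is a research programme, not a proof, and you say so yourself: the absorber that upgrades ``pairwise distinct edge lengths inside a slack codomain'' to ``edge lengths exactly $\{1,\dots,n\}$'' is never constructed, the conditioning of the randomised embedding to keep the reserved positions and reserved lengths untouched is never analysed, and the ``by hand'' treatment of the set $W$ of vertices of degree exceeding $n/\log n$ is asserted without any construction (a single vertex of degree close to $n$ forces almost the entire length spectrum onto its incident edges, so pre-committing those labels compatibly with the bulk embedding and the absorber is itself a major task). The paper's machinery cannot be borrowed for the exactness step: its entire design accepts losing a $\gamma n$ fraction of labels and lengths, so ``morally $\gamma\approx 1$ in the cyclic world'' buys you slack but nothing about surjectivity onto the length spectrum.

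For context, the route you sketch ($\rho$-labellings plus reserved flexible structures such as long bare paths or many leaves, plus absorption) is genuinely close in spirit to how Ringel's conjecture was eventually resolved (by Montgomery, Pokrovskiy and Sudakov, after this paper's publication; the paper itself cites only the bounded-degree case of Joos, Kim, K\"uhn and Osthus), and it is quite different from anything in this paper, whose contribution is precisely the quasirandom sequential-labelling analysis for the approximate statement. So your instinct about where the difficulty lies --- exact coverage of the length spectrum and compatibility of the absorber with the high-degree part --- is correct, but as submitted the argument does not prove the statement; it identifies the obstacles without overcoming them.
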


This conjecture can be strengthened by requiring the embeddings to have a special structure. Specifically, suppose that the vertices of $K_{2n+1}$ are the integers $0,1,\ldots,2n$. For any subgraph $G$ of $K_{2n+1}$ we may define the \emph{cyclic shift} of $G$ as the subgraph $S(G)$ with 
\begin{equation*}
\label{eq:cyclic_shift}
S(G)= \left(\{x+1 \::\: x \in V(G)\}, \{(x+1,y+1) \::\: (x,y) \in E(G)\}\right)
\end{equation*}
where all addition is performed modulo $2n+1$.

If $G$ is any graph with $n$ edges, we say that $K_{2n+1}$ can be \emph{cyclically decomposed} into copies of $G$ if there is a subgraph $G' \simeq G$ of $K_{2n+1}$ such that the cyclic shifts $G',S(G'),\ldots,S^{2n}(G')$ are edge-disjoint (and thus form a decomposition of $K_{2n+1})$.

As reported by Rosa \cite{Rosa66}, the following conjecture is due to Kotzig.
\begin{conjecture}[Ringel--Kotzig conjecture]
\label{conj:kotzig}
For any $(n+1)$-vertex tree $T$ the complete graph $K_{2n+1}$ can be cyclically decomposed into copies of $T$.
\end{conjecture}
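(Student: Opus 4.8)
The plan is to derive Conjecture~\ref{conj:kotzig} from the Graceful Tree Conjecture (Conjecture~\ref{conj:GTLC}), which I shall assume, using the classical rotational construction relating graceful-type labellings to cyclic decompositions of complete graphs. Let $T$ be an $(n+1)$-vertex tree; being a tree it has exactly $n$ edges. By Conjecture~\ref{conj:GTLC} (applied with $n+1$ in place of $n$) there is an injection $\psi\colon V(T)\to\{1,\ldots,n+1\}$ whose induced edge labels $|\psi(x)-\psi(y)|$ are pairwise distinct; since $T$ has $n$ edges and every such difference lies in $\{1,\ldots,n\}$, these labels are exactly $1,2,\ldots,n$, each occurring once. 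Identify $V(K_{2n+1})$ with $\{0,1,\ldots,2n\}$ as in the statement, note $\{1,\ldots,n+1\}\subseteq\{0,\ldots,2n\}$, and let $T'\subseteq K_{2n+1}$ be the copy of $T$ with $V(T')=\psi(V(T))$ and $E(T')=\{\psi(x)\psi(y)\,:\,xy\in E(T)\}$.

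The key step is to show the cyclic shifts $T',S(T'),\ldots,S^{2n}(T')$ are pairwise edge-disjoint; since each has $n$ edges and $|E(K_{2n+1})|=n(2n+1)$, this is equivalent to their forming a decomposition. Assign to each edge $uv$ of $K_{2n+1}$ its \emph{length} $\ell(uv):=\min\{|u-v|,\,2n+1-|u-v|\}\in\{1,\ldots,n\}$. Then for each $j\in\{1,\ldots,n\}$ there are exactly $2n+1$ edges of length $j$ and $S$ cycles through them in a single orbit, while lengths are invariant under $S$. Because $\psi$ takes values in $\{1,\ldots,n+1\}$, every edge of $T'$ satisfies $\ell(uv)=|u-v|$, so by the previous paragraph $T'$ has exactly one edge of each length $1,\ldots,n$. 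Hence the $2n+1$ shifts of that length-$j$ edge of $T'$ run once through every length-$j$ edge of $K_{2n+1}$; taking the union over $j=1,\ldots,n$ shows that the shifts of $T'$ partition $E(K_{2n+1})$. (Equivalently, one checks that $S^a(T')$ and $S^b(T')$ share an edge only if $a\equiv b\pmod{2n+1}$, by reading off the length of a common edge.) This is Rosa's rotational argument, and it in fact shows that any graph with $n$ edges admitting a graceful labelling — indeed the weaker $\rho$-labelling, where edge labels are taken modulo $2n+1$ and merely required to cover $\{1,\ldots,n\}$ — produces the desired cyclic decomposition.

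The main obstacle is that the Graceful Tree Conjecture is itself open, so the above only reduces one hard conjecture to an equally old and hard one. To argue unconditionally, the natural route — the one behind the result announced in the abstract — is probabilistic: order $V(T)$ suitably, assign labels at random while a potential function keeps the multiset of induced edge differences close to uniform on $\{1,\ldots,n\}$, and then use a small reserved set of vertices and labels to correct the leftover imbalances exactly, so that each value in $\{1,\ldots,n\}$ is hit precisely once. The difficulty is that such randomized insertion loses its concentration at vertices of large degree and during the final steps; this is precisely why the paper's method must both restrict the maximum degree ($\maxdeg(T)=O_\gamma(n/\log n)$) and enlarge the codomain to $\{1,\ldots,\lceil(1+\gamma)n\rceil\}$, and hence yields only a cyclic \emph{packing} into a slightly oversized complete graph. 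Removing the degree hypothesis and shrinking the label set back to $\{1,\ldots,n+1\}$, so as to produce an exact graceful (or $\rho$-) labelling of an arbitrary tree, is exactly what a full proof of Conjecture~\ref{conj:kotzig} would require.
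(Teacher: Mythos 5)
The statement you were asked about is an open conjecture, and the paper contains no proof of it: it is recorded as Conjecture~\ref{conj:kotzig} and the paper only establishes an approximate relative, namely Theorem~\ref{thm:main} together with the observation in the introduction that an $m$-graceful labelling yields a cyclic packing of $2m-1$ copies of $T$ into $K_{2m-1}$ (so one gets a cyclic \emph{packing} into a slightly larger complete graph, for trees with $\maxdeg(T)=O_\gamma(n/\log n)$, not a cyclic decomposition of $K_{2n+1}$). Your argument is explicitly conditional on Conjecture~\ref{conj:GTLC}, which is itself open, so it does not constitute a proof of the statement; that is the genuine gap, and no amount of polishing the rotational step closes it. Your own closing paragraph concedes exactly this, and the probabilistic route you sketch for an unconditional argument is precisely what the paper carries out, but only after weakening the conclusion in the two ways you name (degree restriction and enlarged label set), which is why it cannot deliver Conjecture~\ref{conj:kotzig} itself.

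That said, the reduction you do give is correct and is essentially the same as the paper's own remark: a graceful labelling of an $(n+1)$-vertex tree uses each difference $1,\ldots,n$ exactly once, every edge of the embedded copy $T'$ has cyclic length equal to its difference since all labels lie in $\{1,\ldots,n+1\}$, the shift $S$ preserves lengths and acts with a single orbit of size $2n+1$ on each length class (here one uses that $2j\not\equiv 0\pmod{2n+1}$ for $1\le j\le n$), so the $2n+1$ shifts of $T'$ decompose $K_{2n+1}$. This is Rosa's classical argument, and the paper's introduction proves the corresponding statement in the $m$-graceful generality it actually needs. So the correct assessment is: your write-up is a sound conditional reduction plus an accurate description of why the unconditional problem remains open, but it is not, and could not be, a proof of Conjecture~\ref{conj:kotzig}.
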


Finally, we can connect this to graceful labellings. If $T$ has a graceful labelling, then $T$ satisfies the Ringel--Kotzig conjecture. Furthermore, if $T$ has an $m$-graceful labelling, then we can cyclically pack $2m-1$ copies of $T$ into $K_{2m-1}$. Specifically, let $\psi:V(T)\to [m]$ be an $m$-graceful labelling of $T$. Then $\psi$ is also an embedding of $T$ into $K_{2m-1}$, since $\psi$ is injective. We claim the cyclic shifts of $\psi$ form a packing of $2m-1$ copies of $T$ into $K_{2m-1}$. Indeed, by symmetry we only have to check that if $uv$ is an edge of $T$, then $\psi(u)\psi(v)$ is not the image of any $u'v'\in E(T)$ under any non-trivial power of the cyclic shift $\psi'$ of $\psi$. Without loss of generality we may assume $1\le \psi(u)<\psi(v)\le m$. If the range of $\psi'$ contains both $\psi(u)$ and $\psi(v)$ then it must contain the interval from $\psi(u)$ to $\psi(v)$. If $u'v'\in E(T)$ satisfies $\psi'(u')\psi'(v')=\psi(u)\psi(v)$, then we have
\[\big|\psi(u)-\psi(v)\big|=\big|\psi'(u')-\psi'(v')\big|=\big|\psi(u')-\psi(v')\big|\;,\]
where the final step uses that the range of $\psi'$ contains the interval from $\psi(u)$ to $\psi(v)$. Now since $\psi$ is graceful, we have $u'v'=uv$, and hence $\psi'=\psi$. 

Conjecture~\ref{conj:ringel} was wide open until recently when in \cite{BottHlaPigTar:TreePack} an approximate version for bounded degree trees was proven.\footnote{The main result in~\cite{BottHlaPigTar:TreePack} is more general and allows almost perfect decompositions of a complete graph with an arbitrary family of almost-spanning bounded degree trees, addressing also the Tree Packing Conjecture of Gyarf\'as.} The main result of \cite{BottHlaPigTar:TreePack} was extended by Messuti, R\"odl, and Schacht~\cite{MeRoSch:Packing} to permit almost-spanning bounded degree graphs with limited expansion and later Ferber, Lee, and Mousset~\cite{FeLeMo:Packing} allowed for spanning bounded degree graphs with limited expansion. Kim, K\"uhn, Osthus, and Tyomkyn~\cite{KiKuOsTy:Packing} were able to obtain almost perfect decompositions of a complete graph with an arbitrary family of bounded degree graphs, and using this Joos, Kim, K\"uhn and Osthus~\cite{JoKiKuOs:Packing} proved the Tree Packing Conjecture and Ringel's Conjecture exactly for arbitrary bounded degree trees. Ferber and Samotij~\cite{FeSa:PackingTrees} addressed the problem of packing trees with unbounded degrees, in particular packing almost-spanning trees with maximum degree $cn/\log n$ into complete graphs. However the methods used in these papers do not seem not to hint at any approaches for graceful tree labellings. After this paper was made public, Allen, B\"ottcher, Hladk\'y, and Piguet~\cite{ABHP:DegeneratePacking} showed an almost perfect decomposition result for any family of possibly spanning graphs of with maximum degree $cn/\log n$ and constant degeneracy, thus improving (in the setting of complete host graphs) upon~\cite{BottHlaPigTar:TreePack,MeRoSch:Packing,FeLeMo:Packing,KiKuOsTy:Packing}. Some ideas used in~\cite{ABHP:DegeneratePacking} are inspired by the present work. The strongest result on Conjecture~\ref{conj:ringel} for general trees (with no degree restriction) is a result of Montgomery, Pokrovskiy, and Sudakov~\cite{MPS:EmbeddingRainbow}, who proved that $2\ell+1$ copies of any $(n+1)$-vertex tree $T$ pack into $K_{2\ell+1}$ whenever $\ell\ge n+o(n)$.\footnote{Theorem~1.3 in~\cite{MPS:EmbeddingRainbow} contains a slightly weaker statement, but this what its proof actually gives.} Actually, such a packing follows immediately by a similar reduction as the one given after Conjecture~\ref{conj:kotzig} and their main result that such a tree admits a harmonious labelling by a group $\bbZ_{n+o(n)}$.

\subsection{Our result}
In this paper we prove an approximate version of Conjecture~\ref{conj:GTLC} for trees with maximum degree $o\big(\tfrac{n}{\log n}\big)$. This implies approximate versions of the Ringel--Kotzig and Ringel conjectures.

\begin{theorem}
\label{thm:main}
For every $\gamma>0$ there exist $\Pc>0$ and $n_0\in\mathbb{N}$ such that the
following holds for every $n>n_0$. Suppose that $T$ is an $n$-vertex tree and
$\maxdeg(T)\le \tfrac{\Pc n}{\log n}$. Then there exists a
$\lceil(1+\gamma)n\rceil$-graceful labelling $\psi:V(T)\rightarrow [\lceil(1+\gamma)n\rceil]$.
\end{theorem}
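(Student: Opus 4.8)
The plan is to analyze a natural randomized labelling algorithm and show that it succeeds with positive (indeed high) probability. Since we have $\lceil(1+\gamma)n\rceil$ labels available for only $n$ vertices, there is a slack of $\gamma n$ unused labels, and the point of the randomization is to exploit this slack to avoid collisions among the induced edge differences. I would root the tree $T$ at an arbitrary vertex $r$, thereby orienting every edge towards the root, and process the vertices in a breadth-first (or more generally, non-decreasing-depth) order $v_1 = r, v_2, \dots, v_n$. At step $i$ we assign to $v_i$ a label $\psi(v_i) \in [\lceil(1+\gamma)n\rceil]$ chosen uniformly at random from the set of labels that are (a) not yet used by any earlier vertex, and (b) such that the induced difference $|\psi(v_i) - \psi(\mathrm{parent}(v_i))|$ does not coincide with any edge difference already created. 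The bulk of the work is to prove that with high probability this set of admissible labels is never empty, so that the algorithm runs to completion and outputs a graceful labelling.

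The heart of the matter is a careful counting/concentration argument at each step. When we come to assign $v_i$, the label $\psi(v_i)$ is forbidden if it equals one of the at most $i-1 < n$ already-used labels, or if $|\psi(v_i) - \psi(\mathrm{parent}(v_i))|$ equals one of the at most $i-2 < n$ already-used edge differences; the latter rules out at most $2$ labels per used difference, so at most $2n$ further labels. Naively that leaves $\lceil(1+\gamma)n\rceil - 3n$ admissible labels, which is negative — so the crude union bound is far too weak, and the real argument must be subtler. The correct approach is to track, throughout the run of the algorithm, a suitable measure of "how much room is left", for instance via a potential function or via an appropriate notion of the \emph{free} labels and \emph{free} differences (the macros \freev, \freee, \free in the preamble strongly suggest the authors set up exactly such bookkeeping). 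One shows that, conditioned on the history, the number of admissible choices at step $i$ is with high probability at least some definite fraction of $\gamma n$; the randomness of the earlier choices means that the used labels and used differences are "spread out" rather than adversarially clustered near $\psi(\mathrm{parent}(v_i))$, so the double-counting above is wildly pessimistic on average. A martingale or Azuma/McDiarmid-type concentration inequality, applied to the process that records the set of available labels, then controls the fluctuations; here the degree bound $\maxdeg(T) \le \Pc n/\log n$ enters crucially, because it bounds how many edge differences are created by labels close to any single vertex label, and more importantly it bounds the "Lipschitz constant" of each step (changing one earlier decision affects at most $\maxdeg(T)$ subsequent constraints along its subtree boundary), so that the accumulated error term over all $n$ steps is of order $n \cdot \mathrm{poly}(\maxdeg/n) \cdot \sqrt{n} = o(\gamma n)$ precisely when $\maxdeg = o(n/\log n)$.

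Concretely, I would carry out the proof in the following steps. First, fix $\Pc$ small in terms of $\gamma$ and set up the rooted BFS ordering and the precise description of the randomized algorithm together with the notions of admissible labels and edge differences. Second, establish the key "single-step survival" lemma: conditioned on any history in which the relevant quantities (number of free labels, distribution of used differences) are still within their typical ranges, the probability that step $i$ has no admissible label is tiny, and moreover the number of admissible labels is, with very high conditional probability, at least $c\gamma n$ for an absolute constant $c$. Third, promote this to a statement about the whole run by a union bound over the $n$ steps, using the concentration inequality to argue that the "typical range" conditioning is maintained throughout with high probability — this is where one must carefully verify bounded differences and that the failure probabilities, each of the form $\exp(-\Omega(\gamma^2 n / \maxdeg^2 \cdot \text{something}))$ or $\exp(-\Omega((\gamma n)^2 / (n \maxdeg^2)))$, sum to $o(1)$ given $\maxdeg \le \Pc n /\log n$. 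Fourth, conclude that with high probability the algorithm completes, and its output is by construction injective on vertices with pairwise distinct edge differences, i.e.\ a graceful labelling into $[\lceil(1+\gamma)n\rceil]$; in particular such a labelling exists.

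The main obstacle, and the place where all the real difficulty lies, is the second and third steps: making precise the sense in which the randomly chosen earlier labels keep the used labels and used differences "well spread", and quantifying this well enough that it survives $n$ successive conditionings. The danger is a feedback loop in which an unlucky early stretch of the algorithm clusters many differences near some value, which then makes later steps with a parent label near that value much more constrained, which compounds the clustering; ruling this out requires the concentration argument to be genuinely two-sided and self-correcting, and it is exactly here that the sharp threshold $\maxdeg = o(n/\log n)$ — rather than, say, $O(\sqrt n)$ or $O(n^{1-\delta})$ — must emerge from the error analysis rather than being imposed artificially. I expect the authors devote the bulk of the paper to building the machinery (the history $\hist$, the quantities $\freev, \freee$, the correction terms $\Corv, \Core$, and the admissibility sets $\adm$, $\sol{\cdot}{\cdot}$ visible in the preamble) that makes this feedback-loop control rigorous.
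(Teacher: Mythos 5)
There is a genuine gap, and it sits exactly at the heart of your second step. Your algorithm chooses $\psi(v_i)$ \emph{uniformly at random among all admissible labels}, and your survival lemma rests on the claim that the randomness of earlier choices keeps the used edge differences ``spread out''. This is false for that algorithm: the bias is systematic, not a fluctuation. If the parent's label is $a$ and the new label is (roughly) uniform over the unused labels, then the induced difference $d=|\psi(v_i)-a|$ has a triangular law, with small differences produced at about twice the rate of typical ones and differences near $\tilde n$ essentially never produced. This is a drift in the expected trajectory of the process, so Azuma/McDiarmid-type concentration --- which only shows the process tracks its conditional expectations --- cannot repair it; it can only confirm that the set $C_t$ of unused edge labels becomes markedly sparser at the small end. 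Since $n-1$ of the roughly $(1+\gamma)n$ differences must be used, a constant-factor overproduction of small differences exhausts them long before the end; thereafter, for a parent labelled in the middle of the range, every nearby label is inadmissible and the only unused differences are huge ones that a mid-range parent cannot realize at all, so the admissible set genuinely collapses. The extra $\gamma n$ slack does not absorb a constant-factor bias, so no choice of $\Pc$ in your step one rescues the union bound in step three: the ``typical range'' you condition on is not the typical behaviour of your process.

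The paper's proof is built around fixing precisely this: each vertex is assigned in advance a short random interval $\iJ(v)\in\cJ$ of length $\ell$, the tree is cut (Lemma~\ref{lem:cuttree}) into components of order at most $\eps n/\log n$ and properly 2-coloured so that for every edge outside a small exceptional set $\mathcal{R}$ the two endpoints receive \emph{complementary} intervals $\iJ$ and $\overline{\iJ}$; labelling each vertex inside its interval then makes the induced edge-label distribution close to uniform, and the residual non-uniformity near the extremes and the centre of $\bbA$ (and the analogous effect for $\bbC$) is flattened by the correction distributions $\Corv$ and $\Core$, which discard a few extra labels at random. Quasirandomness of $(A_t,C_t)$ is then tracked via counts of the structures $X_1,\dots,X_4$ and a sequential Hoeffding bound (Lemma~\ref{lem:seqhoeff}). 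Your proposal contains none of these ingredients (the interval assignment, the complementarity, the corrections), and also uses the degree bound in the wrong place: in the paper it enters through $\sum_v\deg(v)^2\le 2\Pc n^2/\log n$ when applying Lemma~\ref{lem:seqhoeff} to sums indexed by the children of each vertex, and through the component size $\eps n/\log n$ guaranteeing each vertex is labelled soon after its parent (\ref{pre:int}), not as a Lipschitz constant for changing one earlier decision. So while the top-level framing (sequential randomized labelling, survival with high probability, martingale concentration) matches the paper, the proposal as written would analyse an algorithm that does not work, and the missing idea is exactly the mechanism that keeps the edge-label distribution uniform.
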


This theorem applies to random trees. More precisely, we work with the set $\mathcal T_n$ of all labelled trees on the vertex set $\{1,\ldots,n\}$. By a classical result of Moon~\cite{Moon68}, a tree selected uniformly at random from the set $\mathcal T_n$ has maximum degree $o(\log n)$ with probability tending to~$1$ as $n$ tends to infinity. In particular Theorem~\ref{thm:main} applies to almost all trees.

\medskip
Our proof of Theorem~\ref{thm:main} is an application of the Probabilistic Method, inspired by~\cite{BottHlaPigTar:TreePack}. More precisely, the proof is an application of the Differential Equations Method (DEM). That is, we run a certain randomized algorithm which sequentially labels the vertices of the $n$-vertex tree $T$, and we prove that with high probability this process produces a $[\lceil(1+\gamma)n\rceil]$-graceful labelling of $T$. As the algorithm progresses with the labelling, the sets of (edge- and vertex-) labels available for future steps keep getting sparser. The key for the analysis of the algorithm is to introduce suitable measures of quasirandomness for these sets, and prove that the sets of available labels evolve in a quasirandom way.

Let us note that DEM has been used extensively in discrete mathematics in particular thanks to the tools developed by Wormald~\cite{Wormald}. We do not use Wormald's machinery and it is not clear to us whether that formalism applies in our setting at all. To get a handle on various parameters during the run of the process --- this handle provided in other scenarios by DEM itself --- we introduce in Section~\ref{sec:notation} a variant of Hoeffding's bound. 

After introducing notation and some preliminary facts in Section~\ref{sec:notation}, we outline the proof of Theorem~\ref{thm:main} in Section~\ref{sec:setup}. In Section~\ref{sec:setup}, we also give a detailed description of our labelling algorithm and introduce the key quasirandomness concepts. The missing bits in our proof that the algorithm will produce a labelling required in Theorem~\ref{thm:main} are given in Section~\ref{sec:proof}. In Section~\ref{sec:conclusion} we suggest various strengthenings of our main result.

\section{Notation and auxiliary results}
\label{sec:notation}
For a graph $G$, the \textit{order} of $G$ is the number of vertices of $G$. We write $\Delta(G)$ for the maximum degree of $G$.

We write $a=b \pm \epsilon$ when we have $a\in[b-\eps,b+\eps]$. Extending this, and in a slight abuse of notation, we write $a \pm \delta = b \pm \eps$ for the inclusion $[a - \delta, a + \delta] \subseteq [b - \eps, b+\eps]$. We write $\log$ for the natural logarithm.

We use $\Prob[\cdot]$ and $\Exp[\cdot]$ to denote the probability and the expectation, respectively. All probability spaces considered in this paper are finite. In such a setting, any sigma-algebra is generated by its inclusion-wise minimal nonempty sets, which naturally form a partition of the probability space, and it is convenient to work with the partition rather than the sigma-algebra it generates.

Recall that if $\Omega$ is a finite probability space then a sequence of partitions $(\mathcal{F}_0$, $\mathcal{F}_1$,\dots,  $\mathcal{F}_n)$ of $\Omega$ is a \emph{filtration} if for each $i\in[n]$, the partition $\mathcal{F}_i$ refines $\mathcal{F}_{i-1}$. Recall that given a function $f:\Omega\rightarrow \mathbb{R}$, its \emph{conditional expectation with respect to $\mathcal{F}_i$}, denoted by $\Exp[f|\mathcal{F}_i]$, is a function $\Exp[f|\mathcal{F}_i]:\Omega\rightarrow\mathbb{R}$ defined by $\Exp[f|\mathcal{F}_i](\omega)=\Exp[f|X]$, where $X\in\mathcal{F}_i$ is the cell containing $\omega$.
Recall also that in this setting, a function $f:\Omega\rightarrow \mathbb{R}$ is \emph{$\mathcal{F}_i$-measurable} if $f$ is constant on each cell of $\mathcal{F}_i$. In this paper, $\Omega$ will be the probability space on which a probabilistic process is defined, the cells of $\mathcal{F}_i$ will be given by all the random choices made up to some time $t_i$ in the process, so choosing $t_i$ increasing in $i$ automatically gives a filtration. In this setting, if a function $f$ is $\mathcal{F}_i$-measurable, that means that its value is fixed by the random choices made in the process up to time $t_i$.


\subsection{Hoeffding's bound}The following theorem of Hoeffding~\cite[Theorem~2]{Hoeff} gives bounds on sums of independent real-valued bounded random variables.

\begin{theorem}\label{thm:hoeff}
 Let $Y_1,\ldots,Y_n$ be independent random variables with $0\le Y_i\le a_i$ for each $i\in[n]$. Let $X=Y_1+\dots+Y_n$, and let $\mu=\Exp X$. Then we have
 \begin{align*}
  \Prob[X-\mu\ge t]&\le \exp\left(-\frac{2t^2}{\sum_{i=1}^na_i^2}\right)\quad\text{and}\\
  \Prob[X-\mu\le -t]&\le \exp\left(-\frac{2t^2}{\sum_{i=1}^na_i^2}\right)\,.
 \end{align*}
\end{theorem}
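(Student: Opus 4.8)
The plan is to apply the exponential moment (Chernoff--Cram\'er) method. Fix a parameter $s>0$. Since $u\mapsto e^{su}$ is increasing, the event $\{X-\mu\ge t\}$ coincides with $\{e^{s(X-\mu)}\ge e^{st}\}$, so Markov's inequality applied to the nonnegative random variable $e^{s(X-\mu)}$ gives
\[\Prob[X-\mu\ge t]\le e^{-st}\,\Exp\big[e^{s(X-\mu)}\big]\,.\]
Because $X-\mu=\sum_{i=1}^n(Y_i-\Exp Y_i)$ is a sum of independent random variables, the moment generating function factorises,
\[\Exp\big[e^{s(X-\mu)}\big]=\prod_{i=1}^n\Exp\big[e^{s(Y_i-\Exp Y_i)}\big]\,,\]
so everything reduces to bounding, for each $i$, the moment generating function of the single centred random variable $Z_i:=Y_i-\Exp Y_i$, which takes its values in an interval of length~$a_i$.

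The key ingredient is \emph{Hoeffding's lemma}: if a random variable $Z$ with $\Exp Z=0$ takes values in an interval $[c,d]$, then $\Exp\big[e^{sZ}\big]\le e^{s^2(d-c)^2/8}$ for every real $s$. I would prove this by setting $\psi(s):=\log\Exp[e^{sZ}]$ (finite and smooth, since $Z$ is bounded); then $\psi(0)=0$, $\psi'(0)=\Exp Z=0$, and a direct computation identifies $\psi''(s)$ with the variance of $Z$ under the exponentially tilted law $\mathrm{d}\widetilde{\Prob}:=e^{sZ}\,\mathrm{d}\Prob/\Exp[e^{sZ}]$. Under $\widetilde{\Prob}$ the variable $Z$ still lies in $[c,d]$, and any random variable supported in an interval of length $d-c$ has variance at most $(d-c)^2/4$; hence $\psi''(s)\le(d-c)^2/4$ for all $s$, and Taylor's theorem with Lagrange remainder yields $\psi(s)\le s^2(d-c)^2/8$. (An alternative, calculus-only proof writes $Z=\lambda d+(1-\lambda)c$ with $\lambda=(Z-c)/(d-c)$, applies convexity of $u\mapsto e^{su}$ to get $e^{sZ}\le\lambda e^{sd}+(1-\lambda)e^{sc}$, takes expectations using $\Exp\lambda=-c/(d-c)$, and then estimates the second derivative of the resulting one-variable function.) Applying the lemma with $Z=Z_i$ and $d-c=a_i$ gives $\Exp[e^{sZ_i}]\le e^{s^2a_i^2/8}$.

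Combining the displays, $\Exp[e^{s(X-\mu)}]\le\exp\big(\tfrac{s^2}{8}\sum_{i=1}^na_i^2\big)$, hence
\[\Prob[X-\mu\ge t]\le\exp\!\Big(-st+\tfrac{s^2}{8}\sum_{i=1}^na_i^2\Big)\,.\]
The exponent is a quadratic in $s$ minimised at $s=4t/\sum_{i=1}^na_i^2$, and substituting this value gives $\Prob[X-\mu\ge t]\le\exp\big(-2t^2/\sum_{i=1}^na_i^2\big)$, the first inequality. For the second, apply the first inequality to the variables $a_i-Y_i$: these are independent, satisfy $0\le a_i-Y_i\le a_i$, their sum is $\sum_i a_i-X$, and its mean is $\sum_i a_i-\mu$; the resulting upper-tail bound $\Prob\big[(\sum_i a_i-X)-(\sum_i a_i-\mu)\ge t\big]\le\exp(-2t^2/\sum_i a_i^2)$ is precisely $\Prob[X-\mu\le -t]\le\exp(-2t^2/\sum_i a_i^2)$.

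I expect the only real obstacle to be Hoeffding's lemma --- the sharp bound $e^{s^2(d-c)^2/8}$ on the moment generating function of a bounded centred random variable --- since it is exactly the constant $\tfrac18$ there that makes the final exponent come out as $2t^2/\sum_i a_i^2$ rather than something weaker; everything else (Markov's inequality, independence, the optimisation over $s$, and the symmetry trick for the lower tail) is routine.
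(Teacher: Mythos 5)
Your proof is correct. The paper does not prove Theorem~\ref{thm:hoeff} at all --- it simply cites it as Theorem~2 of Hoeffding --- so there is no in-paper argument to compare against; your write-up is the standard Chernoff--Cram\'er proof (Markov on $e^{s(X-\mu)}$, factorisation by independence, Hoeffding's lemma with the constant $\tfrac18$, optimisation at $s=4t/\sum_i a_i^2$, and the substitution $Y_i\mapsto a_i-Y_i$ for the lower tail), and all steps, including both proofs you sketch for Hoeffding's lemma, are sound. It is also consistent in spirit with what the paper actually does use: the proof of Lemma~\ref{lem:exthoeff} is built on exactly the same single-variable exponential-moment bound~\eqref{eq:HOneVar} (Hoeffding's equation~(4.16), i.e.\ the uncentred form of the lemma you prove), followed by the same Markov-plus-optimisation step with $u=4t/\sum_i a_i^2$, so your argument is essentially the independent-variable special case of the paper's martingale-style extension.
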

Theorem~\ref{thm:hoeff} is one of the most commonly used concentration bounds in probabilistic combinatorics. We shall need an extension of this theorem to non-independent random variables. 

\begin{lemma}\label{lem:seqhoeff}
 Let $\Omega$ be a finite probability space, and $(\cF_0,\dots,\cF_n)$ a filtration. Suppose that for each $1\le i\le n$ we have a nonnegative real number $a_i$, an $\cF_i$-measurable random variable $Y_i$ satisfying $0\le Y_i\le a_i$, nonnegative real numbers $\mu$ and $\nu$, and an event $\cE$ with $\Prob[\cE]>0$. Suppose that almost surely, either $\cE$ does not occur or $\sum_{i=1}^n\Exp\big[Y_i\big|\cF_{i-1}\big]=\mu\pm\nu$. Then for each $t>0$ we have
 \[\Prob\Big[\cE\text{ and }\Big|\sum_{i=1}^nY_i-\mu\Big|\ge\nu+t\Big]\le 2\exp\Big(-\frac{2t^2}{\sum_{i=1}^na_i^2}\Big)\,.\]
\end{lemma}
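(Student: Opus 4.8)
The plan is to split the two-sided estimate into an upper tail for $\sum_{i=1}^n Y_i-\mu\ge\nu+t$ and a lower tail for $\sum_{i=1}^n Y_i-\mu\le-\nu-t$, bound each by $\exp\big(-2t^2/\sum_{i=1}^n a_i^2\big)$ using Lemma~\ref{lem:exthoeff}, and take a union bound to produce the factor $2$. The one genuine difficulty is that Lemma~\ref{lem:exthoeff} needs the bound $\gamma\ge\sum_i p_i$ to hold \emph{almost surely and unconditionally}, whereas our hypothesis only controls $\sum_i\Exp[Y_i\mid\cF_{i-1}]$ on the event $\cE$; I would get around this with an optional-stopping truncation.

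For the upper tail, set $p_i:=\Exp[Y_i\mid\cF_{i-1}]$. Since $Y_i\ge 0$, each $p_i$ is a nonnegative $\cF_{i-1}$-measurable variable, so the partial sums $\sum_{j\le i}p_j$ are nondecreasing and $\cF_{i-1}$-measurable. Let $\tau$ be the largest index $j\in\{0,\dots,n\}$ with $\sum_{i=1}^j p_i\le\mu+\nu$; this is well defined (the set contains $0$, as $\mu,\nu\ge 0$) and by monotonicity $\{\tau\ge i\}=\{\sum_{j=1}^i p_j\le\mu+\nu\}\in\cF_{i-1}$. Put $\widetilde Y_i:=Y_i\,\ind[\tau\ge i]$ and $\widetilde p_i:=p_i\,\ind[\tau\ge i]$: then $\widetilde Y_i$ is $\cF_i$-measurable with $0\le\widetilde Y_i\le a_i$, the variable $\widetilde p_i$ is $\cF_{i-1}$-measurable, $\Exp[\widetilde Y_i\mid\cF_{i-1}]=\widetilde p_i$, and $\sum_{i=1}^n\widetilde p_i=\sum_{i=1}^\tau p_i\le\mu+\nu$ always. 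Applying Lemma~\ref{lem:exthoeff} to these data with $\gamma:=\mu+\nu$ yields $\Prob\big[\sum_i\widetilde Y_i-(\mu+\nu)\ge t\big]\le\exp\big(-2t^2/\sum_i a_i^2\big)$ for every $t>0$. Finally, on $\cE$ the hypothesis gives $\sum_{i=1}^n p_i\le\mu+\nu$ almost surely, hence $\tau=n$ and $\widetilde Y_i=Y_i$ for all $i$; so $\{\cE\}\cap\{\sum_i Y_i-\mu\ge\nu+t\}$ is contained, up to a null set, in $\{\sum_i\widetilde Y_i-(\mu+\nu)\ge t\}$, and the upper-tail bound follows.

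For the lower tail I would run the identical argument on the variables $a_i-Y_i$, which are $\cF_i$-measurable, lie in $[0,a_i]$, and have conditional expectations $a_i-p_i\ge 0$ (using $Y_i\le a_i$) summing on $\cE$ to at most $\big(\sum_i a_i-\mu\big)+\nu$; note $\sum_i a_i-\mu\ge-\nu\ge -\infty$, and in fact $\sum_i a_i-\mu+\nu\ge 0$ since $\cE$ has positive probability, which is all the stopping-time construction needs. The event $\sum_i Y_i-\mu\le-\nu-t$ is exactly $\sum_i(a_i-Y_i)-\big(\sum_i a_i-\mu\big)\ge\nu+t$, so the previous paragraph applies verbatim with centre $\sum_i a_i-\mu$ and the same $a_i$, giving the matching bound. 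A union bound over the two tails completes the proof.

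The main obstacle — essentially the only one — is the mismatch between the unconditional almost-sure hypothesis of Lemma~\ref{lem:exthoeff} and the control we are given, which holds only on $\cE$; this is what forces the truncation at the stopping time $\tau$. Everything else is routine bookkeeping: checking measurability of $\widetilde Y_i$ and $\widetilde p_i$, verifying $\sum_i\widetilde p_i\le\mu+\nu$, and recording that on $\cE$ the stopping does not trigger so that $\widetilde Y_i=Y_i$.
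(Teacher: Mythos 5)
Your proof is correct, and it follows the same skeleton as the paper's: reduce to Lemma~\ref{lem:exthoeff} with $\gamma=\mu+\nu$ by replacing the $Y_i$ with truncated variables that coincide with $Y_i$ on $\cE$ but satisfy the almost-sure hypothesis unconditionally, then handle the lower tail by applying the same argument to $a_i-Y_i$, and finish with a union bound. The only real difference is the truncation device. The paper sets $U_i=Y_i\cdot\ind_{\Prob[\cE\mid\cF_{i-1}]>0}$, i.e.\ it zeroes $Y_i$ as soon as $\cE$ has been ruled out, and then needs a short minimal-counterexample argument to verify that $\sum_{i\le t}\Exp[U_i\mid\cF_{i-1}]\le\mu+\nu$ holds almost surely; you instead stop at the first time the running sum of the conditional means $p_i$ would exceed $\mu+\nu$, so that $\sum_i\widetilde p_i\le\mu+\nu$ holds by construction, and the hypothesis on $\cE$ is used only to show the stopping never triggers there. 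Your variant buys a by-construction verification of the key inequality (no contradiction argument), at the small cost of having to check that the lower-tail centre satisfies $\sum_i a_i-\mu+\nu\ge 0$ so that the stopping time is well defined --- which you do correctly via $\Prob[\cE]>0$ and $p_i\le a_i$; the paper's indicator-based truncation sidesteps that check because it does not refer to the partial sums at all. Both arguments are complete and yield the stated bound.
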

In applications, $\cE$ will be an event that holds with probability close to~1. Then, Lemma~\ref{lem:seqhoeff} controls large deviations of $\sum_{i=1}^nY_i$.
\begin{proof}
 Given $Y_1,\dots,Y_n$, we define random variables $U_1,\ldots,U_n$ by $U_i=Y_i$ if $\Prob[\cE|\cF_{i-1}]>0$, and $U_i=0$ otherwise. Note that $U_i$ is $\cF_i$-measurable for each $i$. Furthermore, we claim that for each $1\le k\le n$ we have almost surely 
 \begin{equation}\label{eq:happy}
 \sum_{i=1}^k\Exp[U_i|\cF_{i-1}]\le\mu+\nu\;.
 \end{equation}
Indeed, suppose $k$ is minimal such that this statement fails, and let $F$ be a set in $\cF_{k-1}$ with $\Prob[F]>0$ witnessing its failure. By minimality of $k$, we have $U_k>0$, so $\Prob[\cE|F]>0$. Thus with positive probability, $\cE$ occurs and $\sum_{i=1}^n\Exp[U_i|\cF_{i-1}]\ge\sum_{i=1}^k\Exp[U_i|\cF_{i-1}]>\mu+\nu$, contradicting the assumption of the lemma.
 
Set $Z_0:=0$ and for $i=1,\ldots,n$ set $Z_i:=\sum_{j=1}^i(U_j-\Exp[U_j|\cF_{j-1}])$. It is straightforward to check that $(Z_i)_{i=0}^n$ is a martingale which satisfies $|Z_i-Z_{i-1}|=|U_i-\Exp[U_i|\cF_{i-1}]|\le a_i$. Thus, Azuma's Inequality (see e.g.~\cite[Theorem 2.25]{JLR:RandomGraphs}) gives us
\begin{align*}
\exp\Big(-\frac{2t^2}{\sum_{i=1}^na_i^2}\Big)&\ge
\Prob\Big[Z_n-Z_0\ge t\Big]
=\Prob\left[ \sum_{i=1}^n (U_i -\Exp[U_i|\cF_{i-1}]) \ge t\right]\\
\JUSTIFY{by~\eqref{eq:happy}}&\ge
\Prob\left[ \sum_{i=1}^n U_i \ge \mu+\nu+t\right]\;.
\end{align*}
If $\cE$ occurs then almost surely $Y_i=U_i$ for each $1\le i\le n$. Therefore, we have
 \[\Prob\Big[\cE\text{ and }\sum_{i=1}^nY_i\ge\mu+\nu+t\Big]\le\exp\Big(-\frac{2t^2}{\sum_{i=1}^na_i^2}\Big)\,.\]
 
 The same argument applied to the random variables $a_i-Y_i$ gives
 \[\Prob\Big[\cE\text{ and }\sum_{i=1}^nY_i\le\mu-\nu-t\Big]\le\exp\Big(-\frac{2t^2}{\sum_{i=1}^na_i^2}\Big)\,,\]
 and so the lemma statement holds by the union bound.
\end{proof}

\subsection{Cutting a tree}
The following lemma (variants of which are well-known) tells us that trees can be easily separated into small components.
\begin{lemma}\label{lem:cuttree}
 For any $\eps\in(0,1)$, any $n$ such that $\eps n\ge 2\log n$, and any tree $T$ with $\Delta(T)\le\tfrac{\eps^2n}{4\log n}$ and $v(T)\le n$, there exists a set $\mathcal{R}$ of edges of $T$ with $|\mathcal{R}|\le\eps v(T)$ such that the components of $T-\mathcal{R}$ have order at most $\tfrac{\eps n}{\log n}$.
\end{lemma}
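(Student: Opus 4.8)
The plan is to repeatedly cut an edge that splits off a reasonably large subtree, and control the number of cuts by a potential/weight argument. First I would root $T$ at an arbitrary vertex $r$, and for every vertex $v$ let $s(v)$ denote the number of vertices in the subtree $T_v$ hanging below $v$. Set the threshold $k:=\lceil\tfrac{\eps n}{2\log n}\rceil$; note that by the hypothesis $\eps n\ge 2\log n$ this is at least $1$, and since $\maxdeg(T)\le\tfrac{\eps^2 n}{4\log n}\le k$ we in particular have $k\ge\maxdeg(T)$. The algorithm is: while some component $C$ of the current forest has order $>\tfrac{\eps n}{\log n}$, pick within $C$ a vertex $v$ that is \emph{lowest} (furthest from the root of $C$) subject to $s_C(v)> k$, where $s_C$ is the subtree-size function computed inside $C$ with its induced root; cut the edge from $v$ to its parent, putting $T_v$ into $\mathcal{R}$'s tally, and recurse.

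The key point is a lower bound on the size of each piece that gets separated off. When we cut at such a lowest vertex $v$, every child $w$ of $v$ (inside $C$) satisfies $s_C(w)\le k$, and $v$ has at most $\maxdeg(T)\le k$ children, so $s_C(v)\le 1+k\cdot\maxdeg(T)$; more importantly $s_C(v)> k$. Actually I want a two-sided bound: the component $T_v$ that is cut off has order at least $k$ (it is $>k$) and the remaining part $C-T_v$ shrinks by at least $k$ unless $C-T_v$ already has order at most $\tfrac{\eps n}{\log n}$. Hence each edge we add to $\mathcal{R}$ either produces a finished component of order $\ge k$ that is never touched again, or reduces the order of a still-too-large component by $\ge k$. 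Charging: assign to each cut the $\ge k$ vertices it removes from the "active" (too large) part, and observe these vertex sets are disjoint across cuts (each vertex is removed from the active region at most once, when the component containing it is first split in a way that places it on the small side or in a finished component). Therefore the number of cuts is at most $v(T)/k\le \dfrac{v(T)}{\eps n/(2\log n)}=\dfrac{2 v(T)\log n}{\eps n}$. This is not quite $\eps v(T)$; to fix the constant I would instead take the threshold $k:=\lceil\tfrac{\eps^2 n}{2\log n}\rceil\ge\maxdeg(T)\cdot\tfrac{2}{1}$... more cleanly, set $k:=\maxdeg(T)+1$ is too small, so the right choice is to balance: pick $k=\lceil \tfrac{\eps n}{\log n}\rceil/2$-ish for the upper component bound and note the number of cuts is $\le v(T)/k$; choosing constants so that $v(T)/k\le \eps v(T)$ requires $k\ge 1/\eps$, i.e. $\tfrac{\eps n}{2\log n}\ge 1/\eps$, i.e. $\eps^2 n\ge 2\log n$, which is slightly stronger than assumed but follows from $\maxdeg(T)\le\tfrac{\eps^2 n}{4\log n}$ together with $\maxdeg(T)\ge 1$ (true since $v(T)\ge 2$; the case $v(T)=1$ is trivial). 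Finally I must check the upper bound $\tfrac{\eps n}{\log n}$ on finished components: when we cut $T_v$ off, $T_v$ itself may still be too large, but then it becomes its own component and the loop will recurse on it, so the invariant is maintained and the process terminates because $v(T)$ is finite and each iteration strictly decreases the total order of too-large components.

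The main obstacle is getting the constants to line up: the natural argument gives $O(\tfrac{v(T)\log n}{\eps n})$ cuts, and squeezing this to $\eps v(T)$ forces the threshold $k$ to be about $\tfrac{1}{\eps}\cdot\tfrac{?}{?}$ while simultaneously $k$ must exceed $\maxdeg(T)$ so that a lowest heavy vertex exists with all children light — these two demands are exactly what the hypothesis $\maxdeg(T)\le\tfrac{\eps^2 n}{4\log n}$ is tuned to reconcile (it gives room to pick $k$ with $\maxdeg(T)\le k\le\tfrac{\eps^2 n}{2\log n}$, whence $v(T)/k\le 2v(T)\log n/(\eps^2 n)\le \eps v(T)/2\cdot(8\log n/(\eps^2 n))\cdot$...). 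So in the write-up I would fix $k:=\lfloor \tfrac{\eps n}{\log n}\rfloor$ as the \emph{component} size target but choose the \emph{cutting} vertex to be a lowest one with subtree size exceeding $\tfrac{k}{2}$; then each cut-off piece has order in $[\tfrac{k}{2},\,\tfrac{k}{2}\maxdeg(T)+1]$, the active region drops by $\ge \tfrac{k}{2}$ per cut (or finishes), giving at most $2v(T)/k\le \tfrac{2v(T)\log n}{\eps n-\log n}\le \eps v(T)$ for $n$ large, using $\eps^2 n\ge 4\log n$. The remaining verifications — that a lowest heavy vertex has all children light, that cut pieces and the shrinking estimate are disjointly chargeable, and that the final components respect the $\tfrac{\eps n}{\log n}$ bound (any leftover large piece is itself reprocessed) — are routine.
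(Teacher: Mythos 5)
Your overall plan (root $T$, repeatedly cut at a lowest vertex whose subtree exceeds a threshold, charge each cut to the disjoint block of vertices it splits off) can be made to work, but the concrete version you settle on has a genuine gap. With the large threshold you propose --- $\lceil\tfrac{\eps n}{2\log n}\rceil$ at first, and $k/2$ with $k=\lfloor\tfrac{\eps n}{\log n}\rfloor$ in your final write-up plan --- the piece $T_v$ you cut off is only bounded above by $1+\tfrac{\eps n}{2\log n}\cdot\maxdeg(T)$, which with $\maxdeg(T)$ as large as $\tfrac{\eps^2 n}{4\log n}$ is about $\tfrac{\eps^3 n^2}{8\log^2 n}$, far above the required $\tfrac{\eps n}{\log n}$. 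Your fallback, that such a piece ``becomes its own component and the loop will recurse on it,'' does not work: inside $T_v$ with its induced root $v$, every vertex other than $v$ has subtree size at most the threshold (that is exactly what made $v$ a \emph{lowest} heavy vertex), so the rule ``cut the edge from a lowest heavy vertex to its parent'' finds nothing to cut, and the component stalls while still too large. This is not just a bookkeeping issue: a spider whose centre has $\maxdeg(T)$ legs of $\tfrac{\eps n}{2\log n}$ vertices each has no single edge whose smaller side has order in the window $(\tfrac{\eps n}{2\log n},\tfrac{\eps n}{\log n}]$, so no variant of the rule (including re-rooting) can split it under your threshold. Relatedly, your side condition ``$k\ge\maxdeg(T)$ so that a lowest heavy vertex has all children light'' is a red herring --- lowest heavy vertices have light children by definition; the constraint you actually need, and never impose, is $\theta\cdot\maxdeg(T)+1\le\tfrac{\eps n}{\log n}$ for the cutting threshold $\theta$, which forces $\theta=O(1/\eps)$ rather than $\Theta(\eps n/\log n)$.

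The repair is to run your procedure with $\theta=\lceil 1/\eps\rceil\le 2/\eps$. While the residual component is larger than $\tfrac{\eps n}{\log n}$, some child of its root has a heavy subtree (otherwise its order would be at most $1+\theta\maxdeg(T)\le\tfrac{\eps n}{2\log n}+\tfrac{\eps n}{2\log n}=\tfrac{\eps n}{\log n}$, using $\maxdeg(T)\le\tfrac{\eps^2 n}{4\log n}$ and $\eps n\ge 2\log n$), so a lowest heavy vertex is not the root and a cut is available; the cut-off piece has order in $(\theta,\,1+\theta\maxdeg(T)]\subseteq(1/\eps,\,\tfrac{\eps n}{\log n}]$, hence is final, and the pieces are disjoint, so the number of cuts is at most $v(T)/\theta\le\eps v(T)$. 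With this change your argument is a correct alternative to the paper's proof, which instead proceeds by induction on $v(T)$: it finds a single edge whose smaller side has order between $2\eps^{-1}$ and $\tfrac{\eps n}{\log n}$ by walking from a leaf and using an averaging over at most $\maxdeg(T)$ neighbours, then recurses on the larger side.
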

\begin{proof}
 We prove the lemma by induction on $v(T)$. The statement is trivially true for $v(T)\le\tfrac{\eps n}{\log n}$, so we may assume $v(T)>\tfrac{\eps n}{\log n}$. It is enough to show that there exists one edge $uv\in E(T)$ such that one of the two components of $T-uv$ has order between $2\eps^{-1}$ and $\tfrac{\eps n}{\log n}$, since then the statement follows by applying the induction hypothesis to the other component.
 
 We find the edge $uv$ by the following `walk' procedure. We start with a leaf vertex $u_1$ and its neighbour $v_1$. Now for each $t\ge 1$ in succession we do the following. If the component of $T-u_tv_t$ containing $v_t$ has order between $2\eps^{-1}$ and $\tfrac{\eps n}{\log n}$, we stop and return $uv=u_tv_t$. If not, we set $u_{t+1}=v_t$ and $v_{t+1}$ to be the neighbour of $u_{t+1}$ not equal to $u_t$ which maximises the order of the component of $T-u_{t+1}v_{t+1}$ containing $v_{t+1}$.
 
 If at time $t$ the component of $T-u_tv_t$ containing $v_t$ has more than $\tfrac{\eps n}{\log n}$ vertices, then by averaging, the component of $T-u_{t+1}v_{t+1}$ containing $v_{t+1}$ has at least
 \[\tfrac{1}{\Delta(T)}\big(\tfrac{\eps n}{\log n}-1\big)\ge\tfrac{1}{\Delta(T)}\cdot\tfrac{\eps n}{2\log n}\ge 2\eps^{-1}\] vertices, where the first inequality is by choice of $n$ and the second by the bound on $\Delta(T)$. Thus the algorithm finds the desired $uv$.
\end{proof}


\section{Setup}
\label{sec:setup}

Before we embark on the proof of Theorem~\ref{thm:main}, we outline the main ideas. We write $\tilde n=\lceil(1+\gamma)n\rceil$. We wish to label our tree in a random process, at each step labelling one new vertex which has one labelled neighbour (so that the set of labelled vertices is always a subtree), and choosing the new vertex label to be admissible, that is, to avoid re-using either the vertex label or the induced edge label. We keep track of the sets of vertex and induced edge labels which remain available at the $t$\textsuperscript{th} step, for which we will write $\setVert_t$ and $\setEdge_t$ respectively. We will show that $\setVert_t$ looks like a uniform random subset of $[\tilde n]$ of cardinality $\tilde n-t$ and that $\setEdge_t$ looks like a uniform random subset of $[\tilde n-1]$ of cardinality $\tilde n-t-1$. The reason why we do this is that if $\setVert_t$ and $\setEdge_t$ were really uniformly distributed it would be easy to show that at every step there is (with probability very close to~1) an admissible choice for the new label.

Choosing the new labels uniformly from all admissible labels will not lead to random-looking sets $\setVert_t$ and $\setEdge_t$. Let us illustrate this in the initial situation when, starting with $\setVert_0=[\tilde n]$ and $\setEdge_0=[\tilde n-1]$, we label first two neighboring vertices $v_1$ and $v_2$. Then the first created edge label on $v_1v_2$ will be assigned the smallest possible value of~$1$  with probability $\frac{2}{\tilde n}$, but the largest possible value of~$\tilde n-1$  with probability only $\frac{2}{\tilde n(\tilde n-1)}$, and the probabilities of labels with intermediate values interpolate these two extremes. This effect will persist, so that $\setEdge_t$ will rapidly become non-uniform, being much sparser for small labels than large. That will in turn cause $\setVert_t$ to evolve in a non-uniform way as well.

But observe that if $uv$ is an edge of $T$, if we choose $\psi(u)$ a uniformly random vertex label, and then choose $\psi(v)$ randomly in a small interval around $\tilde n-\psi(u)$ then the distributions of each of $\psi(u)$, $\psi(v)$ and the induced edge label $\big|\psi(u)-\psi(v)\big|$ are close to the uniform distribution. Thus we would like to label our tree such that most edges and vertices are labelled in about this way.

We will label the vertices of $T$ in the order $v_1,\dots,v_n$ (which we will determine), and in each case we choose an admissible label in a certain short interval $\iJ(v_i)$ within $[\tilde n]$. We choose the $\iJ(v_i)$ to be uniformly distributed\footnote{\label{foot:technicalcomplication}A technical complication arises in that the extremes of the interval $[\tilde n]$ are not covered as much as the rest; we will ignore this for now.} over $[\tilde n]$, and such that for most $v_i$ the intervals $\iJ(v_i)$ and $\iJ(v_j)$, where $v_j$ is the parent of $v_i$ in the ordering, are equally far from and on opposite sides of $\tfrac12(1+\gamma)n$.

In reality, we certainly do not choose vertex labels uniformly at random in the intervals; there is a great deal of dependency in the label choices. However, that dependency is sequential: conditioning on the labelling history of the first $t-1$ vertices, we know the distribution of the $t$\textsuperscript{th} vertex, and we will be able to show that the average of these distributions is close to uniform (in a sense which we will make precise later). This enables us to apply Lemma~\ref{lem:seqhoeff}, which gives us concentration results that imply our quasirandomness properties are maintained. A similar statement applies to the edge labels.

\subsection{Preparation}
\label{ssec:setup}

We now describe the general setup which we use for proving Theorem~\ref{thm:main}. We first reduce to the following special case.

\begin{theorem}\label{thm:mainred}
For every $\gamma>0$ such that $\gamma^{-1}$ is an integer, there exist $\Pc>0$ and $n_0\in\mathbb{N}$ such that the
following holds. Let $\mu=\big\lceil\exp\big(10^{8}\gamma^{-4}\big)\big\rceil^{-1}$, let $\delta_0=\mu^{2/\mu}$, and let $\delta=\delta_0^{10}$. For every $n>n_0$ divisible by $2\delta^{-1}\gamma^{-1}$, if $T$ is an $n$-vertex tree with
$\maxdeg(T)\le \tfrac{\Pc n}{\log n}$, there exists a
graceful labelling $\psi:V(T)\rightarrow [(1+\gamma)n]$.
\end{theorem}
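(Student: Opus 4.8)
The plan is to run and analyse the randomized sequential labelling algorithm outlined in Section~\ref{sec:setup}: I will specify an ordering $v_1,\dots,v_n$ of $V(T)$ and a random rule that, at step $t$, chooses a label $\psi(v_t)$ avoiding all previously used vertex labels and induced edge labels, and then show that with probability $1-o(1)$ the process runs to completion, which by construction produces a graceful labelling $\psi\colon V(T)\to[(1+\gamma)n]$; since the probability is positive, such a labelling exists. Write $\tilde n=(1+\gamma)n$, an integer because $\gamma^{-1}\mid n$, and keep $\mu,\delta_0,\delta$ as in the statement (the divisibility of $n$ by $2\delta^{-1}\gamma^{-1}$ makes all the partition points we use integral). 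The constant $\Pc$ is taken small and $n_0$ large, both in terms of $\gamma$ only, at the end.

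For the combinatorial scaffolding I would apply Lemma~\ref{lem:cuttree} with $\eps$ equal to a suitable small power of $\delta$ --- legitimate since $\maxdeg(T)\le\tfrac{\Pc n}{\log n}\le\tfrac{\eps^2n}{4\log n}$ once $\Pc$ is small enough --- to obtain a set $\mathcal R\subseteq E(T)$ with $|\mathcal R|\le\eps n$ whose removal breaks $T$ into \emph{blocks} of order at most $\tfrac{\eps n}{\log n}$. Root $T$, order the blocks $B_1,\dots,B_m$ so that each $B_k$ attaches to $B_1\cup\dots\cup B_{k-1}$ along a single edge of $\mathcal R$, let $v_1,\dots,v_n$ enumerate $V(T)$ by concatenating, for $k=1,\dots,m$, a search order of $B_k$, and let $\mathrm{par}(v_i)$ be the unique earlier neighbour of $v_i$. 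Declare the root of each block \emph{free}; there are at most $\eps n+1$ free vertices. Each block is assigned a target interval $\iJ$ of length $\ell\asymp\delta_0\tilde n$ for its root, and every other vertex $v$ of the block gets $\iJ(v)$ equal to the reflection of $\iJ(\mathrm{par}(v))$ about $\tfrac12\tilde n$; thus in a block the even- and odd-depth vertices fall in two reflected length-$\ell$ windows and all its induced edge labels fall in one window $W$ of width $\asymp\ell$ (the few blocks whose reflected window would exit $[\tilde n]$ are handled by a separate boundary argument, cf.\ Section~\ref{sec:setup}). The algorithm maintains $A_t\subseteq[\tilde n]$, the unused vertex labels, and $C_t\subseteq[\tilde n-1]$, the unused induced edge labels, with $A_0=[\tilde n]$, $C_0=[\tilde n-1]$. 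At step $t$: if $v_t$ is free, pick $\iJ(v_t)$ uniformly among the length-$\ell$ subintervals of a slightly shrunk $[\tilde n]$ (this fixes the whole block's windows); then set $\mathrm{Adm}_t=\{a\in\iJ(v_t)\cap A_{t-1}:t=1\text{ or }|a-\psi(\mathrm{par}(v_t))|\in C_{t-1}\}$, declare \textsc{Failure} if $\mathrm{Adm}_t=\emptyset$, otherwise pick $\psi(v_t)$ uniformly from $\mathrm{Adm}_t$ and update $A_t=A_{t-1}\setminus\{\psi(v_t)\}$, $C_t=C_{t-1}\setminus\{|\psi(v_t)-\psi(\mathrm{par}(v_t))|\}$. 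A completed run gives a graceful labelling, so it suffices to prove that \textsc{Failure} happens with probability $o(1)$.

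The heart of the argument is a quasirandomness invariant, maintained with high probability by a simultaneous induction on $t$: for every interval $I$ in a fixed polynomial-size family $\mathcal I$ we have $|A_t\cap I|=\tfrac{|A_t|}{\tilde n}|I|\pm\alpha\ell$ and $|C_t\cap I|=\tfrac{|C_t|}{\tilde n-1}|I|\pm\alpha\ell$ for a tolerance $\delta\ll\alpha\ll\delta_0$, plus a \emph{joint} version controlling, inside each block's pair of reflected windows, the correlation between still-available vertex labels and still-available edge differences. Granting the invariant, \textsc{Failure} never occurs: since $|A_{t-1}|,|C_{t-1}|\ge\tilde n-n=\gamma n$, the marginal bounds make $\iJ(v_t)\cap A_{t-1}$ and the set of $a\in\iJ(v_t)$ with $|a-\psi(\mathrm{par}(v_t))|\in C_{t-1}$ each of density bounded below in $\iJ(v_t)$, the joint bound makes their intersection nonempty, and the window $W$ is never exhausted because over the whole run only $n-1<\tilde n-1$ edge labels are created and they tile $[\tilde n-1]$ inside the windows. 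To prove the invariant, fix $I\in\mathcal I$, put $Y_i=\ind[\psi(v_i)\in I]$ (or its edge-label analogue), so $0\le Y_i\le a_i=1$; conditioned on the history $\psi(v_i)$ is uniform on $\mathrm{Adm}_i$, and one shows that, on the event $\cE$ that the invariant has held through step $i-1$, \emph{summed over an entire block} these conditional means average out over the block's reflected windows to give $\sum_i\Exp[Y_i\mid\cF_{i-1}]=\mu_I\pm\nu$ with $\mu_I$ the natural target and $\nu\ll\alpha\ell$ --- for the edge-label statistic this uses the $A$-quasirandomness already in hand, which is why the induction is simultaneous. Lemma~\ref{lem:seqhoeff} applied with this $\cE$, $\sum a_i^2=n$, slack $\nu$ and deviation $\alpha\ell/2$ then gives a bad-event probability $2\exp(-\Omega(\alpha^2\ell^2/n))=2\exp(-\Omega(\alpha^2\delta_0^2n))$, and a union bound over $\mathcal I$, the vertex/edge/joint statistics and the $n$ steps still leaves this $o(1)$; the degree bound $\maxdeg(T)\le\Pc n/\log n$ enters to keep the number of free vertices small and the per-step fluctuations negligible.

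The step I expect to be the main obstacle is establishing and carrying through the \emph{joint} quasirandomness --- showing that inside a block's reflected windows the available vertex labels and available edge differences are independent enough that an admissible label provably survives at every one of the block's steps. A count from marginal densities alone only suffices when $\gamma$ is large; for general $\gamma$ one must prove that the constraints $a\in A_{t-1}$ and $|a-\psi(\mathrm{par}(v_t))|\in C_{t-1}$ behave like independent events of positive probability, and this correlation estimate has to be folded into the invariant propagated by the DEM induction. By comparison, the boundary blocks of Section~\ref{sec:setup} and the arithmetic bookkeeping forced by the divisibility of $n$ are routine.
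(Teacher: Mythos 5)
Your overall strategy --- cutting $T$ with Lemma~\ref{lem:cuttree}, labelling sequentially inside a window and its reflection about $\tfrac12\tilde n$, propagating a quasirandomness invariant with Lemma~\ref{lem:seqhoeff}, and finishing with a union bound --- is the same as the paper's, but two essential ingredients are missing, and without them the argument as written does not close. First, the uniform two-sided invariant you propose to induct on is false for the uncorrected process. Labels within $O(\ell)$ of the ends of $[\tilde n]$ (and, for the paper's midpoint-respecting family $\cJ$, of $\tfrac12\tilde n$) lie in fewer windows than bulk labels, and edge labels within $O(\ell)$ of $\tilde n-1$ can only be induced by the extreme pair of windows; consequently $A_t$ and $C_t$ drift to densities that differ from $\tfrac{|A_t|}{\tilde n}$ by a constant factor on intervals of length $\Theta(\ell)$ by the end of the run (your ``slightly shrunk $[\tilde n]$'' only worsens the boundary deficiency, and the reflected window never actually exits $[\tilde n]$ --- that is not the real boundary issue). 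This is precisely the ``technical complication'' the paper handles by adding the correction distributions $\Corv$ and $\Core$ to Algorithm~\ref{alg:label}, which delete extra unused labels from the under-used regions so that the uniform profile genuinely is the drift of the process; without such a device (or a position-dependent target profile, which you would then have to propagate through the whole DEM analysis) the invariant you state cannot be maintained, and you dismiss this as ``routine''.

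Second, the sentence ``one shows that \ldots\ these conditional means average out over the block's reflected windows'' compresses what is in fact the bulk of the proof. Each $\Exp[Y_i\mid\cF_{i-1}]$ depends on the random label already given to $\prt{v_i}$, so converting the almost-sure hypothesis of Lemma~\ref{lem:seqhoeff} into a deterministic $\mu_I\pm\nu$ requires (i) a count of admissible continuations given the parent's label, jointly in $A$ and $C$ --- this is the structure-count condition~\ref{quasi:X} over $\cX$ together with Lemma~\ref{lem:struct}, i.e.\ exactly the ``joint quasirandomness'' you defer as the main obstacle without proposing a mechanism --- and (ii) a second concentration step over the parents' own random labels (the paper's crude versus fine estimates, Claims~\ref{cl:p} and~\ref{cl:q}), whose martingale increments are bounded by vertex degrees rather than by $1$. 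It is there, via~\eqref{eq:treedegsq}, that the hypothesis $\maxdeg(T)\le\tfrac{\Pc n}{\log n}$ is really used, not merely ``to keep the number of free vertices small''; your bookkeeping with $a_i=1$ and $\sum a_i^2=n$ does not accommodate it. Relatedly, the paper fixes all windows in advance and proves the spread property~\ref{pre:chJ}, which makes the target drift deterministic; with your online choice of the block's window at the root you would need an additional conditioning layer even to formulate the deterministic $\mu_I$ that Lemma~\ref{lem:seqhoeff} requires.
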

(Note that the hierarchy of constants in Theorem~\ref{thm:mainred} is $\gamma>\mu> \delta_0>\delta>\Pc>0$.)

Before proving this, we show why it implies Theorem~\ref{thm:main}.

\begin{proof}[Proof of Theorem~\ref{thm:main}]
 Given $\gamma>0$, let $\gamma'=1/\lceil2\gamma^{-1}\rceil$. Let $\Pc>0$ and $n'_0\in\mathbb{N}$ be returned by Theorem~\ref{thm:mainred} for input $\gamma'$. Let $\mu=\big\lceil\exp\big(10^{8}\gamma'^{-4}\big)\big\rceil^{-1}$, let $\delta_0=\mu^{2/\mu}$, and let $\delta=\delta_0^{10}$. Let $n_0\ge n'_0$ be the smallest integer such that
 \[(1+\gamma')\big(n_0+2\delta^{-1}\gamma'^{-1}\big)\le (1+\gamma)n_0\]
 and such that $\tfrac{\Pc n}{\log n}\ge 2$. Given $n\ge n_0$, and an $n$-vertex tree $T$ with $\maxdeg(T) \le \tfrac{\Pc n}{\log n}$, let $n'$ be the smallest integer which is at least $n$ and  divisible by $2\delta^{-1}\gamma'^{-1}$. Let $T'$ be an $n'$-vertex tree obtained by attaching a path with $n'-n$ vertices to a leaf of $T$.  Theorem~\ref{thm:mainred} applies, so there is a graceful labelling of $T'$ with $(1+\gamma')n'\le(1+\gamma)n$ labels. The induced labelling of $T$ is also graceful, as desired.
\end{proof}

We now give the setup we will use to prove Theorem~\ref{thm:mainred}.

\begin{setup}\label{setup}
Given $\gamma>0$ such that $\gamma^{-1}$ is an integer, we choose\reminder{$\mu$, $\delta_0$, $\delta$, $\epsilon$, $\eta$} $\mu=\big\lceil\exp\big(10^{8}\gamma^{-4}\big)\big\rceil^{-1}$.
 We set $\delta_0=\mu^{2/\mu}$, $\delta=\delta_0^{10}$, $\eps=\delta^{10}$, and $\Pc=\eps^{10}$. Set
 \begin{equation}\label{eq:n0}
  n_0= 2^{\nicefrac{10}{\eta^2}}\,.
 \end{equation}
 Suppose now that $n>n_0$ divisible by $2\delta_0^{-10}\gamma^{-1}$ is given. Note that since $\mu^{-1}$ is an integer, $\delta_0^{-1}$ is also an integer, so this is possible.
 
 For each $0\le i\le n$ set
 \reminder{$\delta_i$} $\delta_i=\mu^{(2n-i)/(\mu n)}$. Observe that for $i=0$, this definition is consistent with the previous definition of $\delta_0$.
 
 Let \reminder{$\ell, m$}$\ell= \frac12 \delta_0^2 \gamma n$, and $m=\delta^2_0\ell=\frac12 \delta_0^4\gamma n$. Because $n$ is divisible by $2\delta_0^{-4}\gamma^{-1}$, and $\delta_0^{-1}$ is an integer, these two quantities are integers and $m$ divides $\ell$. 
 
 Let \reminder{$\tilde n$}$\tilde{n}=(1+\gamma)n$. Because $2\delta_0^{-4}\gamma^{-1}$ divides $n$, in particular $\tilde{n}$ is an integer multiple of $2m$.

Let \reminder{$\bbA, \bbC$} $\bbA:=\big[\tilde{n}\big]$, and $\bbC:=\big[\tilde{n}-1\big]$. We will choose vertex labels from $\bbA$, and edge labels from~$\bbC$.

Let \reminder{$\cI$} $\cI$ be the collection of intervals of length $m-1$ (i.e., size $m$) in $\bbA$ whose first elements are in the set
\[\{1,m+1,2m+1,\dots,\tilde{n}-m+1\}\,,\]
and let \reminder{$\cIe$} $\cIe$ be the collection of intervals of length $m-1$ in $\bbC\cup\{0\}$ whose first elements are in the set
\[\{0,m,\dots,|\bbC|-m+1\}\,.\]

Finally, let \reminder{$\cJ$}$\cJ$ be the set of intervals of length $\ell-1$ in $\bbA$ whose first elements are in the set
\[\big\{1,m+1,2m+1,\dots,\tfrac{\tilde{n}}{2}-m-\ell+1,\tfrac{\tilde{n}}{2}-\ell+1,\tfrac{\tilde{n}}{2}+1,\tfrac{\tilde{n}}{2}+m+1,\tfrac{\tilde{n}}{2}+2m+1,\dots,\tilde{n}-m-\ell+1,\tilde{n}-\ell+1\big\}\,.\]
(See Figure~\ref{fig:intervals}.)

For each $\iJ\in\cJ$, we define the \emph{complementary interval} \reminder{$\overline{\iJ}$}$\overline{\iJ}\in\cJ$ to be the (unique) interval such that the sum of the elements of $\iJ\cup\overline{\iJ}$ is $\ell(\tilde{n}+1)$. By definition, $\iJ$ and $\overline{\iJ}$ are disjoint, one having only elements less than or equal to $\tfrac12\tilde{n}$ and the other having only elements greater than $\tfrac12\tilde{n}$.
\end{setup}
\begin{figure}
\includegraphics[scale=0.8]{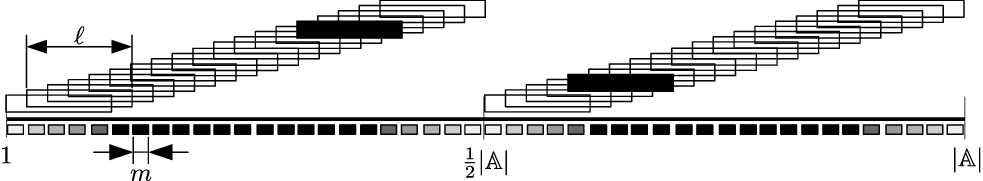}
\label{fig:intervals}
\caption{The intervals $\cI$ (below the line) and $\cJ$ (above the line). The shade of grey used for an interval $I\in\cI$ corresponds to the number of intervals of $\cJ$ containing $I$. A pair of complementary intervals of $\cJ$ is highlighted.}
\end{figure}
Note that
\begin{align}\label{eq:IJsize}
|\cI|=|\cIe|=\tfrac{\tilde{n}}{m} \quad\mbox{and}\quad |\cJ|=\tfrac{\tilde{n}}{m}-2\big(\tfrac{\ell}{m}-1\big)\;.
\end{align}

In analysing our labelling algorithm, the values $(\delta_i)_{i=1}^n$ will quantify errors in our quasirandomness properties in steps $i=1,\ldots,n$ (see Claim~\ref{cl:p}). Our choice of $\delta_i$ ensures that the following holds. 
\begin{fact}
For each $1\le t\le n$, we have
\begin{equation}\label{eq:deltas}
\sum_{i=1}^{\lceil\tfrac{t}{\delta n}\rceil}\delta \mu^{-1} \delta_{i\delta n}<\tfrac{1}{100}\delta_t\,.
\end{equation}	
\end{fact}
\begin{proof}
Let us write $a:=\mu^{-\delta/\mu}$. Later, we will later need that
\begin{equation}\label{eq:aaaautocz}
a\ge 2\;.
\end{equation}
We have
\begin{equation*}
a=\mu^{-\delta/\mu}=\mu^{-\mu^{\frac{20}{\mu}-1}}\;.
\end{equation*}
We shall just sketch~\eqref{eq:aaaautocz}, assuming that $\mu$ is `very small', i.e., we will look at the limit behaviour of $\mu$ around~0. It is easy (but tedious) to check that these calculations do go through for our $\mu$ (with a lot of room). Recall a basic fact from calculus that $\lim_{x\searrow 0}x^{\frac1x}=1$. Similar calculation yield $\lim_{x\searrow 0}x^{\frac{20}x-1}=1$. Hence, $a$ equals $\mu$ taken a power which is close to~$-1$, and thus $a$ is very big. In particular~\eqref{eq:aaaautocz} holds.

Next, let us bound $\sum_{i=1}^{\lceil\tfrac{t}{\delta n}\rceil}\mu^{-i\delta/\mu}$. Using the formula for geometric sequences, we obtain
\begin{align}\label{eq:bbbautocz}
\sum_{i=1}^{\lceil\tfrac{t}{\delta n}\rceil}\mu^{-i\delta/\mu}=\sum_{i=1}^{\lceil\tfrac{t}{\delta n}\rceil}a^i=\frac{a^{\lceil\tfrac{t}{\delta n}\rceil+1}-a}{a-1}\leByRef{eq:aaaautocz}\frac{a^{\lceil\tfrac{t}{\delta n}\rceil+1}}{\frac{a}{2}}=2a^{\lceil\tfrac{t}{\delta n}\rceil}\;.
\end{align}

Using the definition of $\delta_q$, we can expand the left-hand side as follows,
\begin{align*}
\sum_{i=1}^{\lceil\tfrac{t}{\delta n}\rceil}\delta \mu^{-1} \delta_{i\delta n}&=
\delta\mu^{-1}\cdot\mu^{2/\mu}\cdot\sum_{i=1}^{\lceil\tfrac{t}{\delta n}\rceil}\mu^{-i\delta/\mu}\\
&
\leByRef{eq:bbbautocz} \delta\mu^{-1}\cdot\mu^{2/\mu}\cdot 2\mu^{-\lceil t/(\delta n)\rceil\delta/\mu}=
2\delta\mu^{-1}\cdot\delta_{\lceil t/(\delta n)\rceil\delta n}\\
&\le 2\delta\mu^{-1}\cdot\mu^{-\delta/\mu}\cdot \delta_{t}
=2\delta \cdot \mu^{-(\frac{\delta}{\mu}+1)}\cdot \delta_t\;. 
\end{align*}
Recall that $\delta$ is much smaller than $\mu$. In particular, $\mu^{-(\frac{\delta}{\mu}+1)}<\mu^{-1.1}$. Using the relation between $\delta$ and $\mu$ once again, we get that $2\delta\cdot\mu^{-1.1}<\frac{1}{100}$. Hence, \eqref{eq:deltas} follows.
\end{proof}

The inequality~\eqref{eq:deltas} will be important in showing that our error terms do not grow too fast.

\subsection{Assigning intervals of labels to vertices}
Our next step is to give an order on $V(T)$ and for each vertex the promised `small interval' in $\cJ$ in which we will eventually choose its label. As mentioned in the outline of the proof, for most edges we will label the two endpoints from complementary small intervals. Specifically, we will do this for all edges not in the set $\mathcal{R}$ given by the following lemma.

\begin{lemma}\label{lem:setup} Assume Setup~\ref{setup}.
 Given an $n$-vertex tree $T$ with $\Delta(T)\le\frac{\Pc n}{\log n}$, there exists a set \reminder{$\mathcal{R}$}$\mathcal{R}\subset E(T)$, an ordering \reminder{$v_1,\ldots,v_n$}$V(T)=\{v_1,\ldots,v_n\}$ of the vertices of $T$, and a collection of intervals $\iJ(v_i)\in\cJ$ for each $1\le i\le n$ with the following properties.
 \begin{enumerate}[label=(PRE\arabic*)]
  \item\label{pre:sizeR} $|\mathcal{R}|\le\eps n$.
  \item\label{pre:degen} For each $i\ge 2$, the vertex $v_i$ has exactly one neighbour in the set $\{v_1,\ldots,v_{i-1}\}$. We call this vertex the \emph{parent of $v_i$}, and denote it by $\prt{v_i}$.\reminder{$\prt{v}$}
  \item\label{pre:int} If $v_iv_j\in E(T)\setminus\mathcal{R}$ then $|i-j|\le\frac{\eps n}{\log n}$.
  \item\label{pre:chJ} For each interval $S\subset[n]$ and each $\iJ\in\cJ$, we have $\sum_{i\in S}\ind_{\iJ(v_i)=\iJ}=\frac{|S|}{|\cJ|}\pm\delta^2 n$.
  \item\label{pre:comp} For each $v_iv_j\in E(T)\setminus\mathcal{R}$ we have $\iJ(v_i)=\overline{\iJ(v_j)}$.
 \end{enumerate}
\end{lemma}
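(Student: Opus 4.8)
The plan is to produce the three objects in two stages: first the set $\mathcal{R}$ together with the ordering $v_1,\dots,v_n$, and only afterwards the intervals $J(v_i)$.

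\emph{Stage 1.} First I would apply Lemma~\ref{lem:cuttree} to $T$ with parameter $\eps$; its hypotheses hold because $\maxdeg(T)\le\tfrac{\Pc n}{\log n}\le\tfrac{\eps^2 n}{4\log n}$ and $\eps n\ge 2\log n$ for $n>n_0$. This gives a set $\mathcal{R}\subset E(T)$ with $|\mathcal{R}|\le\eps n$, which is \ref{pre:sizeR}, such that every component of $T-\mathcal{R}$ has order at most $\tfrac{\eps n}{\log n}$; call these components $D_1,\dots,D_r$. Since $T$ is a tree, no edge of $\mathcal{R}$ has both endpoints in one $D_k$, and contracting each $D_k$ to a point turns $T$ into a tree on $r$ vertices whose edge set is (a copy of) $\mathcal{R}$. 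Fix a connected ordering $D_1,\dots,D_r$ of this ``component tree''; then for each $k\ge2$ there is exactly one edge $e_k\in\mathcal{R}$ joining $D_k$ to $D_1\cup\dots\cup D_{k-1}$, and its other endpoint lies in a single earlier component. Let $w_k$ be the endpoint of $e_k$ inside $D_k$ (and let $w_1$ be an arbitrary vertex of $D_1$), list the vertices of each $D_k$ in a connected ordering starting at $w_k$ (for instance a BFS order), and let $v_1,\dots,v_n$ be the concatenation of these lists over $k=1,\dots,r$. Every initial segment $\{v_1,\dots,v_i\}$ then induces a subtree of $T$ (the components up to $D_{k-1}$ are linked by $e_2,\dots,e_{k-1}$, and $w_k$ links $D_k$ to them via $e_k$), so counting edges shows that each $v_i$ with $i\ge2$ has exactly one neighbour among $v_1,\dots,v_{i-1}$, which is \ref{pre:degen}; and any edge of $T-\mathcal{R}$ lies inside some $D_k$, whose vertices occupy a block of at most $\tfrac{\eps n}{\log n}$ consecutive positions, which gives \ref{pre:int}.

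\emph{Stage 2.} Since every edge inside a component $D_k$ lies in $E(T)\setminus\mathcal{R}$, property \ref{pre:comp} forces $J$ to be constant on each of the two bipartition classes $X_k\ni w_k$ and $Y_k$ of $D_k$, with the two values complementary; conversely, for any choice of $A_k\in\cJ$, assigning $A_k$ to $X_k$ and $\overline{A_k}$ to $Y_k$ satisfies \ref{pre:comp} on all of $E(T)\setminus\mathcal{R}$, because complementation is an involution on $\cJ$ and the endpoints of any edge of $D_k$ lie in opposite classes. I would choose $A_1,\dots,A_r\in\cJ$ independently and uniformly at random and show that \ref{pre:chJ} then holds with positive probability, which suffices. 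Fix an interval $S\subset[n]$ and some $J\in\cJ$, and put $x_k=|X_k\cap\{v_i:i\in S\}|$ and $y_k=|Y_k\cap\{v_i:i\in S\}|$. Then $\sum_{i\in S}\ind_{J(v_i)=J}=\sum_{k=1}^r\big(x_k\ind_{A_k=J}+y_k\ind_{A_k=\overline J}\big)$ is a sum of $r$ independent random variables, the $k$-th bounded by $x_k+y_k\le|D_k|\le\tfrac{\eps n}{\log n}$ and of expectation $\tfrac{x_k+y_k}{|\cJ|}$ (as $A_k$ is uniform on $\cJ$, which contains both $J$ and $\overline J$), so the total expectation is exactly $\tfrac1{|\cJ|}\sum_k(x_k+y_k)=\tfrac{|S|}{|\cJ|}$. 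Since $\sum_k(x_k+y_k)^2\le\tfrac{\eps n}{\log n}\sum_k(x_k+y_k)\le\tfrac{\eps n^2}{\log n}$, Theorem~\ref{thm:hoeff} applied in both directions yields
\[\Prob\Big[\,\Big|\sum_{i\in S}\ind_{J(v_i)=J}-\tfrac{|S|}{|\cJ|}\Big|\ge\delta^2 n\,\Big]\le 2\exp\!\Big(-\tfrac{2\delta^4 n^2}{\eps n^2/\log n}\Big)=2\,n^{-2\delta^4/\eps}\,.\]
By the constants fixed in Setup~\ref{setup} one has $2\delta^4/\eps=2\delta_0^{-60}$, a huge constant, so this probability is far below $n^{-3}$; since there are fewer than $n^3$ pairs $(S,J)$, a union bound shows that with positive probability \ref{pre:chJ} holds for all of them at once.

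\emph{Main obstacle.} The probabilistic half, Stage 2, is routine once Lemma~\ref{lem:cuttree} is in hand, thanks to the deliberately generous hierarchy $\Pc\ll\eps\ll\delta\ll\delta_0\ll\mu$ in the Setup. The delicate point is Stage 1: one has to be sure that concatenating connected orderings of the components really yields a connected ordering of the whole of $T$ in which \emph{every} later vertex has exactly one earlier neighbour. This is exactly where it matters that $\mathcal{R}$ imposes a tree structure on the components, so that each $D_k$ hangs off the earlier part by a single edge $e_k$; and that same fact is what makes \ref{pre:int} automatic.
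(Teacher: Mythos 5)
Your proposal is correct and follows essentially the same route as the paper: obtain $\mathcal{R}$ from Lemma~\ref{lem:cuttree}, order the vertices so that each component of $T-\mathcal{R}$ occupies a consecutive block and every vertex has exactly one earlier neighbour, and then assign to each component an independent uniformly random interval of $\cJ$ (with the complementary interval on the other bipartition class), verifying~\ref{pre:chJ} by Hoeffding's inequality and a union bound over the at most $n^3$ pairs $(S,\iJ)$. The only differences are cosmetic: you build the ordering via the contracted component tree rather than the paper's greedy rule, and you count only the vertices of each component lying in $S$ (making the expectation exactly $|S|/|\cJ|$), whereas the paper treats the at most two boundary components by a crude $\delta^5 n$ error term.
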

\begin{proof}
 Given a tree $T$, let $\mathcal{R}$ be the set of edges of $T$ returned by Lemma~\ref{lem:cuttree} with input $\eps$. Then we have $|\mathcal{R}|\le\eps n$, giving~\ref{pre:sizeR}. Let $T_1,\ldots,T_s$ be the components of $T-\mathcal{R}$.

Let $v_1$ be an arbitrary vertex of $V(T)$. Now for each $i\ge 2$ in turn, we choose $v_i$ to be a vertex of $V(T)\setminus\{v_1,\ldots,v_{i-1}\}$ which has a neighbour in $\{v_1,\ldots,v_{i-1}\}$, if possible choosing a vertex in the same component of $T-\mathcal{R}$ as $v_{i-1}$.
\ref{pre:degen} follows from this construction.

Suppose now that in the above construction we cannot choose $v_i$ in the same component of $T-\mathcal{R}$ as $v_{i-1}$. Then this is because we already chose all vertices of that component. It follows that each component forms an interval in our ordering on the vertices. In particular, if $v_iv_j\in E(T)\setminus\mathcal{R}$ then $v_iv_j$ is in one component $T_k$ of $T-\mathcal{R}$. Then Lemma~\ref{lem:cuttree} tells us that $|i-j|\le v(T_k)\le\frac{\epsilon n}{\log n}$, giving~\ref{pre:int}.

We now take an arbitrary proper colouring of $T$ with two colours, red and blue. For each component $T_k$ of $T-\mathcal{R}$ we choose independently an interval $\iJ(T_k)$ uniformly at random from $\cJ$. For each red $v_j\in T_k$ we set $\iJ(v_j)=\iJ(T_k)$, and for each blue $v_j\in T_k$ we set $\iJ(v_j)=\overline{\iJ(T_k)}$. This gives~\ref{pre:comp} deterministically.

It remains to show that with positive probability we obtain~\ref{pre:chJ}. To that end, let $S\subset[n]$ be an interval, and $\iJ$ be an element of $\cJ$. We view $S$ as an interval in $(v_1,\dots,v_n)$. Consider the intersections of various components $T_k$ with $S$. Since these components form intervals in $(v_1,\dots,v_n)$, at most two components have non-empty intersection with $S$ and are not contained in $S$. These at most two components contain at most $2\tfrac{\eps n}{\log n}<\delta^5 n$ vertices. Let $\mathcal K$ index the components which are contained in $S$, i.e., $T_k\subset S$ for each $k\in\mathcal K$. For each $k\in\mathcal K$ define $Y_k$ as follows. If $\iJ(T_k)=\iJ$, let $Y_k$ be equal to the number of red vertices in $T_k$; if $\overline{\iJ(T_k)}=\iJ$ let $Y_k$ be equal to the number of blue vertices in $T_k$, and otherwise let $Y_k=0$.
  
  Because the sets $\iJ(T_k)$ are chosen independently, we get that $Y_k$ are independent random variables, with $0\le Y_k\le v(T_k)$ for each $k\in\mathcal K$. We have $\Exp[Y_k]=\frac{v(T_k)}{|\cJ|}$ for each $k\in\mathcal K$. Because $v(T_k)\le\tfrac{\eps n}{\log n}$ for each $k\in\mathcal K$, and $\sum_{k\in\mathcal K}v(T_k)\le n$, we have
  \[
\sum_{k\in\mathcal K}v(T_k)^2\le\frac{n}{\;\tfrac{\eps n}{\log n}\;}\cdot\left(\frac{\eps n}{\log n}\right)^2=\frac{\eps n^2}{\log n}\,.\]
  Let $X=\sum_{k\in\mathcal K}Y_k$. By Theorem~\ref{thm:hoeff} we have
  \[\Prob\big[|X-\Exp[X]|>\tfrac{1}{4}\delta^4 n\big]\le 2\exp\left(-\tfrac18\cdot\delta^{8}n^2\cdot\tfrac{\log n}{\eps n^2}\right)=2\exp\left(-\tfrac{1}{8\delta^2}\log n\right)\le n^{-10}\,, \]
  where we used the relation between $\epsilon$ and $\delta$ from Setup~\ref{setup}.
  
  We see that with probability at least $1-n^{-10}$ we have
  \[X=\frac{|S|\pm\delta^5 n}{|\cJ|}\pm\tfrac14\delta^4 n=\frac{|S|}{|\cJ|}\pm\tfrac12\delta^4 n\,.\]
  Since the number of vertices $v_i\in S$ with $\iJ(v_i)=\iJ$ is $X$, we conclude that for the chosen $S$ and $\iJ$ the probability that~\ref{pre:chJ} fails is at most $n^{-10}$. Taking the union bound over $|\cJ|\le n$ choices of $\iJ$ and $\binom{n}{2}$ choices of $S$ we get that with probability at least $1-n^{-7}$ we have~\ref{pre:chJ}, as desired.
\end{proof}

\subsection{Counting structures, and quasirandom properties}
\label{ssec:quasirandomproperties}
The phenomenon of quasirandomness is central in discrete mathematics. For example, the celebrated Chung--Graham--Wilson Theorem asserts that if the edge density and the four-cycle density of an $n$-vertex graph $G$ are close to those of an Erd\H{o}s--R\'enyi random graph $\mathbb{G}(n,p)$, then $G$ has many other properties as a typical $\mathbb{G}(n,p)$.
In the proof of Theorem~\ref{thm:mainred}, we need to control the evolution of the sets $\setVert\subset \bbA$ and $\setEdge\subset \bbC$ of vertex labels and edge labels not used so far during the sequential labeling of $T$. We want to prove that the pair $(\setVert,\setEdge)$ behaves quasirandomly. Thus, in some analogy to the Chung--Graham--Wilson Theorem, we want to come up with quantities control over of which will imply further quasirandomness properties of $(\setVert,\setEdge)$. To this end, we introduce a class $\cX$ of `structures' in Section~\ref{sssec:structures}. We will write $|X(\setVert,\setEdge)|$ for the number of structures of a given type which appear in $(\setVert,\setEdge)$, and the main technical work of this paper will be to show that this number remains close to what one would expect if the two sets were chosen independently at random. In Section~\ref{sssec:roleofX} we then explain that these parameters are indeed useful for our graceful labelling. In Section~\ref{sssec:quasirandomness} we then state our main quasirandomness condition and state a useful lemma connected to it.

\subsubsection{Structures}\label{sssec:structures}
The key objects that allow us to control quasirandomness are `structures' defined below.
\begin{definition}[structure]\label{def:struct}
 A \emph{structure} $X$ is a graph such that
 \begin{itemize}
 	\item its vertices are labelled with either pairwise distinct elements of $\bbA$ (we call such vertices \emph{fixed}) or pairwise distinct intervals in $\bbA$ (we call such vertices \emph{free}), and
 	\item its edges are labelled with either pairwise distinct elements of $\bbC$ (\emph{fixed}) or with distinct choices of special symbols $\ed_1$ or $\ed_2$ (\emph{free}). 
\end{itemize} 	
 	When dealing with structures, we identify vertices and edges with their labels. So, vertices in $X$ are numbers (if they are fixed) or intervals (if they are free). Likewise, edges in $X$ are numbers or special free symbols $\ed_1$ and $\ed_2$.
 
In any structure $X$, we require that if $u$ and $v$ are fixed vertices, then $uv$ is a fixed edge and we have $uv=|u-v|$, and we require that each free edge has one endpoint fixed and the other free.
\end{definition}
We shall be interested in four groups of structures, denoted by $X_1,\ldots,X_4$. Actual structures in each individual group, say in $X_i$, have the same underlying graph, but differ by labels on vertices and edges. So, let us first describe the graphs underlying these four groups of structures. The graph underlying $X_1$ is a single vertex. The graph underlying $X_2$ and $X_4$ is a path on three vertices. The graph underlying $X_3$ is an edge. Let us now describe the free and fixed vertices and edges of these four groups of structures. The single vertex of $X_1$ is free. Members of $X_2$ have one end-vertex fixed and the two remaining vertices free. The edge connecting the two free vertices is fixed and the other one is free. In $X_3$, one vertex is free and the other is fixed; the edge connecting these two vertices is free. Last, the center of $X_4$ is free, the end-vertices are fixed, and the two edges are free. We then refer to the actual structures in group $X_i$ by writing $X_i\llbracket\cdot\rrbracket$ where the double brackets contains specification of labels of fixed vertices (these are specified by an element of $\bbA$), free vertices (these are specified by an interval in $\bbA$), and fixed edges (these are specified by an element of $\bbC$). Note that free edges are not parametrized in Definition~\ref{def:struct}, and hence no information regarding them is included the double brackets.

For example, individual structures within the group $X_2$ differ by the actual label on the fixed vertex, the two labels on the free vertices, and the label on the single fixed edge. 
That is, given $a,a'\in\bbA$, $c\in\bbC$ and distinct $I,I'\in\cI\cup\cJ$ \reminder{$\Xone{I}$, $\Xtwo{a,I,c,I'}$, $\Xthree{a,I}$, $\Xfour{a,a',I}$,}
we write
$\Xone{I}$, $\Xtwo{a,I,c,I'}$, $\Xthree{a,I}$ and $\Xfour{a,a',I}$ for structures as shown on
Figure~\ref{fig:structs}. 

\begin{definition}\label{def:cX}
Let $\cX$ \reminder{$\cX$} be the set of structures of the form $\Xone{I}$, $\Xtwo{a,I,c,I'}$, $\Xthree{a,I}$ and $\Xfour{a,a',I}$ where $a,a'$ are distinct elements of $\bbA$, where $c$ is an element of $\bbC$, and where $I,I'\in\cI$ are distinct intervals.
\end{definition}
\begin{figure}[t]
\caption{Structures for quasirandomness}
\label{fig:structs}
\begin{tikzpicture}[thick,main node/.style={circle,minimum size=9mm,draw}]
  \node[main node] at (0,0) (1) {$I$};
  \node[align=center] at (0,-1) {$\Xone{I}$};
  \node[main node] at (3,0) (2) {$a$};
  \node[main node] at (4,2) (3) {$I$};
  \node[main node] at (5,0) (4) {$I'$};
  \path
   (2) edge node[left] {$\ed_1$} (3)
   (3) edge node[left] {$c$} (4);
  \node[align=center] at (4,-1) {$\Xtwo{a,I,c,I'}$};
  
  \node[main node] at (8,0) (5) {$a$};
  \node[main node] at (8,2) (6) {$I$};
  \path
   (5) edge node[left] {$\ed_1$} (6);
  \node[align=center] at (8,-1) {$\Xthree{a,I}$}; 
  
  \node[main node] at (11,0) (7) {$a$};
  \node[main node] at (12,2) (8) {$I$};
  \node[main node] at (13,0) (9) {$a'$};
  \path
   (7) edge node[left] {$\ed_1$} (8)
   (8) edge node[left] {$\ed_2$} (9);
  \node[align=center] at (12,-1) {$\Xfour{a,a',I}$}; 
  
\end{tikzpicture}
\end{figure}

\begin{remark}\label{rem:polibmi}
	We shall explain in Section~\ref{sssec:roleofX} the essential role structures play in our proof. But at this moment, let us give at least a brief hint behind their definition. Say, we are dealing with labeling of particular three vertices $v_1$, $v_2$, and $v_3$ of our tree, which are in a mutual position as in group $X_2$. Say, for some reason (which we explain in Section~\ref{sssec:roleofX}), we want to label $v_1$ with $a$, $v_2$ with a label from $I$ and $v_3$ with a label from $I'$. Further, suppose that we want the induced edge label on $v_2v_3$ to be $c$. Then these requirements are clearly reflected by $\Xtwo{a,I,c,I'}$. You can also observe that such a situation has exactly `one degree of freedom', where each degree of freedom represents the possibility of many choices (more than 2) of one label. Indeed, selecting arbitrarily a label $a_2$ for $v_2$ in $I$, we have at most two options, namely $a_2+c$ and $a_2-c$ (these numbers need not be in $I'$). In this sense, all structures $\Xone{I}$, $\Xtwo{a,I,c,I'}$, $\Xthree{a,I}$ and $\Xfour{a,a',I}$ have exactly one degree of freedom.
	
	This perspective also explains why a free edge is not specified by any interval, in contrast to free vertices. Indeed, each free edge in the above structures is incident to a fixed vertex from one side and to a free vertex from another side, and hence specifying an interval for the free vertex already tells us the interval for the induced edge label. The only reason why we introduce the `special symbols' $\ed_1$ and $\ed_2$ is to distinguish the two edges graph-theoretically (that is, without taking into account different properties their induced edge labels may enjoy).
	
	Last, let us explain why Definition~\ref{def:cX} we require $I$ and $I'$ to be distinct. Recall that in Lemma~\ref{lem:setup}\ref{pre:comp} the intervals assigned to the two endvertices of each edge $T$ (except the special ones in $\mathcal{R}$) are complementary, and hence distinct. $I$ and $I'$ will represent these assigned intervals.
\end{remark}

We can now define define a concept of ``one structure following the pattern of another structure'' (so, we do not define any notion of ``pattern'' per se), and of chosen labels.

\begin{definition}[to follow pattern, chosen label]\label{def:chosenEtAl}
 Suppose that we have a structure $X$. Suppose that $X$ has a free edge $\ed_i$ with fixed endpoint labelled $a$ and free endpoint labelled $I$. We then write
\reminder{$\diff(\ed_i;X)$}
$\diff(\ed_i;X):=|a-\min(I)|$. We write $\freev(X)$\reminder{$\freev(X)$} for the set of free vertex labels in $X$, and $\freee(X)$ \reminder{$\freee(X)$}for the set of free edge labels in $X$.  We write $\free(X)$\reminder{$\free(X)$} for the total number of free (vertex- or edge-) labels in $X$. 

Suppose that we are given structures $X$ and $X'$. We say that $X'$ \emph{follows the pattern} $X$ if $\free(X')=0$ and there is a graph isomorphism $\rho$ from $X'$ to $X$ such that 
\begin{itemize}
	\item for each vertex $u\in X'$ for which $\rho(u)$ is fixed in $X$, the labels of $u$ and of $\rho(u)$ are the same,
	\item for each vertex $u\in X'$ for which $\rho(u)$ is free in $X$, the label of $u$ is contained in the label of $\rho(u)$,
	\item for each edge $uv\in X'$ for which $\rho(uv)$ is a fixed edge in $X$, the labels of $uv$ and of $\rho(uv)$ are the same.
\end{itemize}
 We call the labels of vertices $u\in X'$ for which $\rho(u)$ is free in $X$, \emph{chosen (vertex) labels}. Likewise, we call the labels of edges $uv\in X'$ for which $\rho(uv)$ is free in $X$, \emph{chosen (edge) labels}. We write $\chosenv{X'}{X}$\reminder{$\chosenv{X'}{X}$} for the set of chosen vertex labels in $X'$ and $\chosene{X'}{X}$\reminder{$\chosene{X'}{X}$} for the set of chosen edge labels in~$X'$.

Given $\setVert\subset\bbA$ and $\setEdge\subset\bbC$, and a structure $X$, we write
\reminder{$X(\setVert,\setEdge)$}
\begin{equation}\label{eq:substitutionbrackets}
X(\setVert,\setEdge)
\end{equation} for the set of all structures following the pattern $X$ whose chosen vertex labels are in $\setVert$ and whose chosen edge labels are in $\setEdge$.
\end{definition}
Note that the double square brackets use to parametrise the families $X_1,\ldots,X_4$ have a different meaning than the parentheses in~\eqref{eq:substitutionbrackets}; we can for example write $\Xtwo{a,I,c,I'}(\setVert,\setEdge)$ for the set of all structures following the pattern $\Xtwo{a,I,c,I'}$ whose chosen vertex labels are in $\setVert$ and whose chosen edge label is in~$\setEdge$.

\subsubsection{The role of structures $\cX$}\label{sssec:roleofX}
At the beginning of Section~\ref{sec:setup} we outlined the main idea of the proof of Theorem~\ref{thm:main} (which is also the main idea of  Theorem~\ref{thm:mainred}): We proceed labeling the vertices of $T$. During this process, the set $\setVert\subset \bbA$ of available vertex labels and the set $\setEdge\subset \bbC$ of available edge labels get sparser. We need to control that the sets $\setVert$ are $\setEdge$ are spread over the intervals $\bbA$ and $\bbC$  in a quasirandom way. Actually, we need to control even the interactions of $\setVert$ and $\setEdge$; for example if $\setVert$ consisted of all even numbers and $\setEdge$ of all odd numbers then these sets are very uniformly spread, but it is not possible to label a single new pair of vertices that form an edge of $T$. It turns out that the quantities we need to control for our proof of Theorem~\ref{thm:mainred} are exactly the quantities $|X(\setVert,\setEdge)|$, for each $X\in \cX$. In other words, we control the number of structures that follow the pattern $X$ and use elements from $\setVert$ and $\setEdge$ as the chosen vertex labels and edge labels, respectively.
For example, observe that the density $\frac{|\setVert\cap I|}{|I|}$ of $\setVert$ on $I\in\cI$ is equal to $\tfrac{|\Xone{I}(\setVert,\setEdge)|}{|\Xone{I}(\bbA,\bbC)|}$. For the other three structures, one should think of $a\in\bbA$ as the chosen label for some vertex $v_i$ of $T$. Then $|\Xthree{a,I}(\setVert,\setEdge)|$ is the number of ways to give a neighbour $v_j$ of $v_i$ a label in $I$ which has not yet been used and which induces an edge label that has not yet been used (which is obviously a useful quantity to control). If one thinks of $a$ as being the chosen label of a vertex $v_i$, then $|\Xtwo{a,\iJ(v_j),c,\iJ(v_k)}(\setVert,\setEdge)|$ is the number of ways to label a child $v_j$ of $v_i$ and grandchild $v_k$ of $v_i$ within their chosen intervals, not re-using vertex or edge labels used previously, and using the edge label $c$ on $v_jv_k$. The quantity $|\Xfour{a,a',\iJ(v_j)}(\setVert,\setEdge)|$ plays a similar r\^ole, except that we fix the vertex label used for $v_k$ to be $a'$ rather than the edge label for $v_jv_k$.

One of our quasirandomness conditions,~\ref{quasi:X} below, states that these quantities are likely to stay close to what one would expect if the sets $\setVert$ and $\setEdge$ were chosen independently at random; thus $|\Xone{I}(\setVert,\setEdge)|$ will be close to the overall density of $\setVert$ in $\bbA$ when $\setVert$ is generated by running our labelling algorithm for a given time. In addition, we introduce property~\ref{quasi:edged}, which states that the density of $\setEdge$ on each interval $\iIe\in\cIe$ is approximately $\tfrac{|\setEdge|}{|\bbC|}$.

\medskip

Since this idea of a structure following the pattern of another is a little complicated, before stating Fact~\ref{fact:zada} which elaborates on our remark about one degree of freedom from Remark~\ref{rem:polibmi}, and puts structural and quantitative restrictions on $X(\setVert,\setEdge)$ for any structure $X\in\cX$ and sets $\setVert$ and $\setEdge$, we give a little example.
\begin{example}\label{ex:zada}
In Figure~\ref{fig:X2specificvertex} we give an example of structures with a specific label on a specific free vertex or edge. More precisely, we show the only four structures following $\Xtwo{10,I,3,I'}$ (for say $I=I'=\{5,6,\dots,15\}$) which induce label $2$ on $\ed_1$. The two leftmost structures are also the only structures following $\protect\Xtwo{10,I,3,I'}$ that have label 12 on $I$. Since $I$ and $I'$ overlap, they cannot be two distinct sets of $\cI$, and thus $\Xtwo{10,I,3,I'}\not\in\cX$, i.e.\ this particular structure is not one we will be interested in. We are only interested in $\Xtwo{10,I,3,I'}\in \cX$, which means $I$ and $I'$ are disjoint. If $10$ is in $I$, we might still have two possible vertex labels in $I$ which induce label $2$ on $\ed_1$ (if $10$ is not in $I$, there will be at most one such choice), but given such a vertex label there is at most one valid choice of vertex label in $I'$.
\end{example}

\begin{figure}[t]
	\caption{The four structures following $\protect\Xtwo{10,I,3,I'}$ (for say $I=I'=\{5,6,\dots,15\}$) which induce label 2 on $\ed_1$. The two leftmost structures are also the only structures following $\protect\Xtwo{10,I,3,I'}$ that have label 12 on $I$.}
	\label{fig:X2specificvertex}
	\begin{tikzpicture}[thick,main node/.style={circle,minimum size=9mm,draw}]
	\node[main node] at (3,0) (2) {$10$};
	\node[main node] at (4,2) (3) {$12$};
	\node[main node] at (5,0) (4) {$9$};
	\path
	(2) edge node[left] {$2$} (3)
	(3) edge node[left] {$3$} (4);
	
	\node[main node] at (11,0) (2) {$10$};
	\node[main node] at (12,2) (3) {$8$};
	\node[main node] at (13,0) (4) {$5$};
	\path
	(2) edge node[left] {$2$} (3)
	(3) edge node[left] {$3$} (4);
	
	\node[main node] at (7,0) (2) {$10$};
	\node[main node] at (8,2) (3) {$12$};
	\node[main node] at (9,0) (4) {$15$};
	\path
	(2) edge node[left] {$2$} (3)
	(3) edge node[left] {$3$} (4);
	
	\node[main node] at (15,0) (2) {$10$};
	\node[main node] at (16,2) (3) {$8$};
	\node[main node] at (17,0) (4) {$11$};
	\path
	(2) edge node[left] {$2$} (3)
	(3) edge node[left] {$3$} (4);
	\end{tikzpicture}
\end{figure}

We now state the following fact, which generalises the above example.

\begin{fact}\label{fact:zada}
Suppose that $X\in\cX$ and sets $\setVert$ and $\setEdge$ are given. 
\begin{enumerate}[label=(\alph*)]
\item\label{en:ZadaA}
Given any free vertex and a label in $\bbA$ there is at most one structure in $X(\setVert,\setEdge)$ in which the corresponding vertex has that label. Similarly for any free edge and label in $\bbC$ there are at most two structures in $X(\setVert,\setEdge)$ in which the corresponding edge has that label.
\item\label{en:ZadaB}
We have $|X(\setVert,\setEdge)|\le \min_{I\in\freev(X)}|I|=m$.
\end{enumerate}
\end{fact}
\begin{proof}
We first deal with Part~\ref{en:ZadaA}, the case of substituting a chosen label, say $a^*$, on a free vertex. The statement is trivial for structures of the groups $X_1$, $X_3$ and $X_4$, because for these, substituting $a^*$ on the originally free vertex fully specifies that structure. For structures $\Xtwo{a,I,c,I'}$, we recall that Definition~\ref{def:cX} requires all elements of $I$ to be smaller than all elements of $I'$ or vice versa. So, if $a^*$ is the chosen vertex label for the smaller of the two intervals, then we know that the bigger of the two must get the label $a^*+c$. If $a^*$ is the chosen vertex label on the bigger of the two intervals, then we know that the smaller of the two must get the label $a^*-c$.

When a chosen edge label, say $c^*$, is substituted in structures $\Xtwo{a,I,c,I'},\Xthree{a,I},\Xfour{a,a',I}$ on the free edge $\ed_1$, then in all these cases the free vertex labeled with $I$ must be given a chosen vertex label $a+c^*$ or $a-c^*$. This completely specifies the chosen labels in case of $\Xthree{a,I},\Xfour{a,a',I}$. In case of $\Xtwo{a,I,c,I'}$, there is at most one way of choosing a vertex label for the free vertex labeled $I'$ by the argument from the previous paragraph. The free edge $\ed_2$ appears only in structure $\Xfour{a,a',I}$, where the argument is completely symmetric.

Part~\ref{en:ZadaB} follows from Part~\ref{en:ZadaA}.
\end{proof}

\subsubsection{The quasirandomness conditions}\label{sssec:quasirandomness}
As explained in Section~\ref{sssec:roleofX}, our quasirandomness conditions, given in Definition~\ref{def:quasirandom} below, assert that the pair $(\setVert,\setEdge)$ is a quasirandom subset of the pair $(\bbA,\bbC)$ of density roughly $\frac{|\setVert|^2}{|\bbA|^2}$. Here, the quasirandomness condition is expressed via structures from $\cX$, that is, by comparing $|X(\setVert,\setEdge)|$ and $|X(\bbA,\bbC)|$ for $X\in \cX$. For this reason, it is convenient to express the quantities $|X(\bbA,\bbC)|$ explicitly. This is done in the easy lemma below.
\begin{lemma}\label{lem:expressXcount}
	Suppose that $a$ and $a'$ are distinct elements of $\bbA$, that $c\in\bbC$, that $I\in\cI$, and that $\iJ\in\cJ$. Then we have the following.
	\begin{enumerate}[label=(\alph*)]
		\item\label{en:fullX3} $\big|\Xthree{a,\iJ}(\bbA,\bbC)\big|=\ell\pm 1$,
		\item\label{en:fullX3cI} $\big|\Xthree{a,I}(\bbA,\bbC)\big|=m\pm 1$,
		\item\label{en:fullX4} $\big|\Xfour{a,a',\iJ}(\bbA,\bbC)\big|=\ell\pm 3$,
		\item\label{en:fullX2}
		$\big|\Xtwo{a,\iJ,c,\overline{\iJ}}(\bbA,\bbC)\big|=\big|\{(b,b')\in\iJ\times\overline{\iJ}:|b-b'|=c\}\big|\pm 3$.
	\end{enumerate}
\end{lemma}
\begin{proof}
For~\ref{en:fullX3}, observe that $\Xthree{a,\iJ}(\bbA,\bbC)$ contains all structures that follow the pattern $\Xthree{a,\iJ}$ and whose single chosen vertex is labelled with an element of $\iJ$. The only exception is when $a\in\iJ$; in that case the structure in which the chosen vertex is labelled with $a$ is not counted (recall that in Definition~\ref{def:struct}, vertices are labelled with distinct labels). Since $|\iJ|=\ell$, we have that $\big|\Xthree{a,\iJ}(\bbA,\bbC)\big|=\ell$ or $\big|\Xthree{a,\iJ}(\bbA,\bbC)\big|=\ell-1$. The proof of~\ref{en:fullX3cI} is exactly the same.

Similarly, for~\ref{en:fullX4}, observe that $\Xfour{a,a',\iJ}(\bbA,\bbC)$ contains all structures that follow the pattern $\Xfour{a,a',\iJ}$ and whose chosen vertex is labelled by any element from $\iJ\setminus \{a, a', \tfrac{a+a'}{2}\}$ (the reason for excluding $\tfrac{a+a'}{2}$ is that such a choice would give us the same label on the edges $\ed_1$ and $\ed_2$).

  For~\ref{en:fullX2}, observe that $\Xtwo{a,\iJ,c,\overline{\iJ}}(\bbA,\bbC)$ contains all structures that follow the pattern $\Xtwo{a,\iJ,c,\overline{\iJ}}$ and whose label $b$ chosen on the free vertex $\iJ$ and label $b'$ chosen on the free vertex $\overline{\iJ}$ satisfy 
  \begin{enumerate}[label=(\roman*)]
  	\item\label{en:lc1}$b\in \iJ$, $b'\in\overline{\iJ}$, $|b-b'|=c$, and
  	\item\label{en:lc2}$b\neq a$, $b'\neq a$, $|b-a|\neq|b-b'|$.
  \end{enumerate}
The number of pairs $(b,b')$ satisfying~\ref{en:lc1} is $\big|\{(b,b')\in\iJ\times\overline{\iJ}:|b-b'|=c\}\big|$, and each of the three restrictions in~\ref{en:lc2} decreases this number by at most~1.
\end{proof}

\begin{definition}[Quasirandomness conditions]\label{def:quasirandom}
\label{def:qr-conditions}
A pair of sets $\setVert\subset\bbA$ and $\setEdge\subset\bbC$ is \emph{$\alpha$-quasirandom} if we have the following.
\begin{enumerate}[label=(QUASI\arabic*)]
\item\label{quasi:edged} For all $\iIe\in\cIe$ we have $|\iIe\cap \setEdge|=m\cdot\frac{|\setVert|}{\tilde{n}}\pm\alpha m$.
\item\label{quasi:X} For all $X \in \cX$ we have $|X(\setVert,\setEdge)|=|X(\bbA,\bbC)|\big(|\setVert|/\tilde{n}\big)^{\free(X)}\pm\alpha m$.
\end{enumerate}
\end{definition}

 It may seem strange that we only have $|\setVert|$ in the estimates, and not $|\setEdge|$, but we will prove that throughout our labelling process, $|\setVert|$ and $|\setEdge|$ are close enough for the difference to be immaterial. Note that we only insist on counts of structures in $\cX$ being preserved, so that $I,I'\in\cI$, even though we defined our structures allowing $I,I'\in\cI\cup\cJ$. The following claim lets us deduce the latter (in the cases we need it) from the former.
 
 \begin{lemma}\label{lem:struct} Suppose that $(\setVert,\setEdge)$ is $\alpha$-quasirandom, that $a$ and $a'$ are distinct elements of $\bbA$, that $c\in\bbC$, and that $\iJ\in\cJ$. If $\ell\ge 3\alpha^{-1}$, then we have
\begin{align}
    \label{XtwoiJ}\big|\Xtwo{a,\iJ,c,\overline{\iJ}}(\setVert,\setEdge)\big|&=\big(|\setVert|/\tilde{n}\big)^3\big|\{(b,b')\in\iJ\times\overline{\iJ}:|b-b'|=c\}\big|\pm 3\alpha\ell\,, \\
    \label{XthreeiJ}\big|\Xthree{a,\iJ}(\setVert,\setEdge)\big|&=\big(|\setVert|/\tilde{n}\big)^2\ell\pm 2\alpha\ell\quad\text{and} \\
    \label{XfouriJ}\big|\Xfour{a,a',\iJ}(\setVert,\setEdge)\big|&=\big(|\setVert|/\tilde{n}\big)^3\ell\pm 2\alpha\ell\,.
  \end{align}
 \end{lemma}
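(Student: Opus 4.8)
The plan is to decompose each of the long intervals $\iJ,\overline{\iJ}\in\cJ$ into $\ell/m$ consecutive short intervals from $\cI$ and express the structure counts over $\cJ$ as sums of structure counts over $\cI$, to which~\ref{quasi:X} applies directly. Concretely, write $\iJ=\bigsqcup_{k=1}^{\ell/m}I_k$ and $\overline{\iJ}=\bigsqcup_{k=1}^{\ell/m}\overline I_k$ with $I_k,\overline I_k\in\cI$; note these are genuinely in $\cI$ because the first elements of the intervals of $\cJ$ are chosen (see Setup~\ref{setup}) among $\{1,m+1,2m+1,\dots\}$, i.e.\ they are aligned to the $\cI$-grid, and $m\mid\ell$.

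For~\eqref{XthreeiJ}: since a structure following $\Xthree{a,\iJ}$ just assigns the free endpoint a label $b\in\iJ$ with $b\in A$ and $|a-b|\in C$, and this decomposes over the $I_k$, we get $\big|\Xthree{a,\iJ}(A,C)\big|=\sum_{k=1}^{\ell/m}\big|\Xthree{a,I_k}(A,C)\big|$. Each summand is $\Xthree{a,I_k}\in\cX$, so~\ref{quasi:X} gives $\big|\Xthree{a,I_k}(A,C)\big|=|\Xthree{a,I_k}(\bbA,\bbC)|(|A|/\tilde n)^2\pm\alpha m$. Here $|\Xthree{a,I_k}(\bbA,\bbC)|$ counts pairs $(b,c)$ with $b\in I_k$, $c=|a-b|\in\bbC$; summing over $k$ this total is $\ell$ up to the at most two `boundary' choices of $b$ for which $|a-b|\notin\bbC$ (e.g.\ $b=a$), which is absorbed into the error. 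Summing the $\ell/m$ error terms $\alpha m$ yields total error $\alpha\ell$, and adding the $O(1)$ correction gives the claimed $\pm 2\alpha\ell$ (using $\ell\ge 3\alpha^{-1}$ to absorb the constant into the slack between $\alpha\ell$ and $2\alpha\ell$). The argument for~\eqref{XfouriJ} is identical: decompose $\iJ$ (the free center) into the $I_k$, apply~\ref{quasi:X} to each $\Xfour{a,a',I_k}\in\cX$ with $\free=3$, and observe $\sum_k|\Xfour{a,a',I_k}(\bbA,\bbC)|$ equals $\ell$ up to at most two forbidden center labels (those making an induced edge label leave $\bbC$, or coinciding with $a$ or $a'$).

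For~\eqref{XtwoiJ} one decomposes both long intervals: a structure following $\Xtwo{a,\iJ,c,\overline{\iJ}}$ chooses the center $b\in\iJ$ and the far endpoint $b'\in\overline{\iJ}$ with $b,b'\in A$, $|a-b|\in C$, $|b-b'|=c$; so $\big|\Xtwo{a,\iJ,c,\overline{\iJ}}(A,C)\big|=\sum_{k,k'}\big|\Xtwo{a,I_k,c,\overline I_{k'}}(A,C)\big|$. Only those pairs $(k,k')$ for which some $(b,b')\in I_k\times\overline I_{k'}$ has $|b-b'|=c$ contribute, and since $\overline I_{k'}$ has length $m-1$ there are at most two values of $k'$ per $k$ (the $\cI$-grid cells meeting $b+c$ or $b-c$), hence at most $2\ell/m$ contributing pairs. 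But care is needed: $\Xtwo{a,I_k,c,\overline I_{k'}}\in\cX$ only when $I_k\neq\overline I_{k'}$ and they are disjoint — since one lies below $\tfrac12\tilde n$ and the other above (by the complementarity of $\iJ,\overline\iJ$), disjointness holds automatically, so each such pattern is in $\cX$. Apply~\ref{quasi:X} to each: $\big|\Xtwo{a,I_k,c,\overline I_{k'}}(A,C)\big|=|\Xtwo{a,I_k,c,\overline I_{k'}}(\bbA,\bbC)|(|A|/\tilde n)^3\pm\alpha m$, and $\sum_{k,k'}|\Xtwo{a,I_k,c,\overline I_{k'}}(\bbA,\bbC)|$ counts exactly pairs $(b,b')\in\iJ\times\overline{\iJ}$ with $|b-b'|=c$ and $|a-b|\in\bbC$, which equals $\big|\{(b,b')\in\iJ\times\overline{\iJ}:|b-b'|=c\}\big|$ up to at most one forbidden $b$ (namely $b=a$). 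Summing at most $2\ell/m$ error terms $\alpha m$ gives $2\alpha\ell$, and the $O(1)$ correction and an extra $\alpha$-slack push it to the stated $\pm 3\alpha\ell$.

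I expect the only genuinely delicate point to be bookkeeping at the interval boundaries: verifying that the decomposition of $\iJ$ into members of $\cI$ is exact (which is where the grid-alignment in Setup~\ref{setup} and $m\mid\ell$ are essential), and checking that in each sum the `idealized' count $\sum|X(\bbA,\bbC)|$ differs from the clean closed form ($\ell$, or the pair-count) by only an additive constant coming from a bounded number of degenerate label choices — so that the hypothesis $\ell\ge 3\alpha^{-1}$ suffices to swallow that constant into the gap between the natural error $\alpha\ell$ (resp.\ $2\alpha\ell$) and the claimed $2\alpha\ell$ (resp.\ $3\alpha\ell$). Everything else is a direct summation of the quasirandomness estimate~\ref{quasi:X} over the $\ell/m$ (or $2\ell/m$) grid cells.
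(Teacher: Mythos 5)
Your proposal is correct and follows essentially the same route as the paper: decompose $\iJ$ (and $\overline{\iJ}$) into the $\ell/m$ grid intervals of $\cI$, sum the estimate~\ref{quasi:X} termwise (with only at most $2\ell/m$ nonzero terms in the $X_2$ case), and absorb the bounded number of degenerate label choices using $\ell\ge 3\alpha^{-1}$. One small bookkeeping remark: you slightly undercount the degenerate choices (for $X_4$ the centre label $\tfrac{a+a'}{2}$ is also forbidden because the two induced edge labels must be distinct, and for $X_2$ also $b'=a$ and the labels $b$ with $|a-b|=c$), but the true constant is still at most $3\le\alpha\ell$, so your stated error bounds are unaffected.
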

 \begin{proof}
Let us first establish~\eqref{XthreeiJ}. Let us recall that the interval $\iJ$ is partitioned into $\ell/m$ many intervals $\{I\}_{I\in\cI,I\subset\iJ}$. Therefore, $\Xthree{a,\iJ}(\setVert,\setEdge)$ is partitioned into $\left\{\Xthree{a,I}(\setVert,\setEdge)\right\}_{I\in\cI,I\subset\iJ}$. Property~\ref{quasi:X} applies to each structure $\Xthree{a,I}$ with $I\in\cI$ and $I\subset\iJ$. Hence,
  \begin{align*}
    \big|\Xthree{a,\iJ}(\setVert,\setEdge)\big|&=\sum_{I\in\cI,I\subset\iJ}\big|\Xthree{a,I}(\setVert,\setEdge)\big|\\
  &=\big(|\setVert|/\tilde{n}\big)^2\sum_{I\in\cI,I\subset\iJ}\left(\big|\Xthree{a,I}(\bbA,\bbC)\big|\pm \alpha m\right) \\
  &=\big(|\setVert|/\tilde{n}\big)^2\big|\Xthree{a,\iJ}(\bbA,\bbC)\big|\pm \alpha\ell\\
  \JUSTIFY{by Lemma~\ref{lem:expressXcount}\ref{en:fullX3}}&=\big(|\setVert|/\tilde{n}\big)^2\ell\pm 2\alpha\ell\;.
  \end{align*}

  The proof of ~\eqref{XfouriJ} is very similar:
  \begin{align*}
\big|\Xfour{a,a',\iJ}(\setVert,\setEdge)\big|&=\sum_{I\in\cI,I\subset\iJ}\big|\Xfour{a,a',I}(\setVert,\setEdge)\big|\\
& =\big(|\setVert|/\tilde{n}\big)^3\sum_{I\in\cI,I\subset\iJ}\left(\big|\Xfour{a,a',I}(\bbA,\bbC)\big|\pm \alpha m\right)\\
 &=\big(|\setVert|/\tilde{n}\big)^3\big|\Xfour{a,a',J}(\bbA,\bbC)\big|\pm \alpha\ell\\
 \JUSTIFY{by Lemma~\ref{lem:expressXcount}\ref{en:fullX4}}&=\big(|\setVert|/\tilde{n}\big)^3\ell\pm 2\alpha\ell\;. 
  \end{align*}
   
   For~\eqref{XtwoiJ}, observe that
  \[\big|\Xtwo{a,\iJ,c,\overline{\iJ}}(\setVert,\setEdge)\big|=\sum_{\substack{I,I'\in\cI\\I\subset\iJ,I'\subset\overline{\iJ}}}\big|\Xtwo{a,I,c,I'}(\setVert,\setEdge)\big|\]
  and furthermore in the sum at most $2\ell/m$ terms are non-zero. Summing~\ref{quasi:X}, and observing that we only need to sum the $\alpha m$ error term $2\ell/m$ times, we obtain
\begin{align*}
\big|\Xtwo{a,\iJ,c,\overline{\iJ}}(\setVert,\setEdge)\big|&=\big(|\setVert|/\tilde{n}\big)^3\big|\Xtwo{a,\iJ,c,\overline{\iJ}}(\bbA,\bbC)\big|\pm 2\alpha\ell\\
\JUSTIFY{by Lemma~\ref{lem:expressXcount}\ref{en:fullX2}}&=\big|\{(b,b')\in\iJ\times\overline{\iJ}:|b-b'|=c\}\big|\pm 3\alpha\ell\,.
\end{align*}
 \end{proof}

\subsection{The algorithm}

The idea of the labelling algorithm is now straightforward. We will label the vertices in order, choosing at each time $t$ to give $v_t$ a vertex label in $\iJ(v_t)$ which has not previously been used, and which induces an edge label on $v_t\prt{v_t}$ which has not previously been used. Unfortunately, this simple version of the algorithm does not quite maintain the quasirandom properties mentioned above, because different intervals $I\in\cI$ are contained in different numbers of intervals $\iJ\in\cJ$; this is the `technical complication' mentioned in Footnote~\ref{foot:technicalcomplication}. Indeed, in Figure~\ref{fig:intervals} we saw that the labels around the extremes and the centre of $\bbA$ are used less frequently than those in the intermediate ranges. To correct this we introduce a distribution $\Corv$ on $\cI\cup\{\ast\}$. At each time we in addition randomly sample from $\Corv$, and either do nothing (if $\Corv$ returns $\ast$) or remove a randomly chosen so far unused vertex label from $I$ (if $\Corv$ returns $I$).

Most of the mass of $\Corv$ is on $\ast$. That means that the total number of labels removed during the run will be tiny compared to the `extra' $\gamma n$ labels we are given by Theorem~\ref{thm:mainred}. On the other hand, near the extremes and the centre of $\bbA$ a substantial proportion of the vertex labels will be removed by $\Corv$ without serving as vertex labels. These two contrasting properties are consistent since only a small number of vertex labels are near the extremes and the centre of $\bbA$.

An analogous complication arises when dealing with  edge labels $\bbC$. Indeed, we can see, for example, that for $\tilde{n}-1\in\bbC$ to appear as an edge label on $uv$, we must have chosen $\iJ(u)$ and $\iJ(v)$ to be the two extreme intervals of $\cJ$, while for $\tfrac{n}{2}\in\bbC$ (or any other edge label not close to $0$ or $\tilde{n}$) there are $\frac{2\ell^2}{m^2}$ possible choices of the pair $\iJ(u)$, $\iJ(v)$. To deal with these discrepancies, we introduce a suitable distribution $\Core$ on $\cIe\cup\{\ast\}$. In analogy with $\Corv$, we remove edge vertex labels from intervals $\iIe$ chosen according to $\Core$.

We will see that our labelling algorithm labels $v_i$ more or less uniformly in $\iJ=\iJ(v_i)$. If $v_i\prt{v_i}$ is not in $\mathcal{R}$, then the induced edge label on $v_i\prt{v_i}$ is chosen (approximately) from the distribution in which the probability of choosing $c$ is
\reminder{$\el{\iJ,c}$}
\begin{equation}\label{eq:AmiSlips}
\el{\iJ,c}:=\frac{\big|\big\{(a,a')\in\iJ\times\overline{\iJ}:|a-a'|=c\big\}\big|}{\ell^2}\,.
\end{equation}
For convenience, we define $\el{\iJ,0}$ according to the above formula (even though $0$ is not an edge label). 

We shall need the following simple properties of $\el{\iJ,c}$.
\begin{fact}\label{fact:alexandria}
\begin{enumerate}[label=(\alph*)]
	\item\label{en:Ocasio} 
For any edge label $c$, there are at most $\nicefrac{2}{\delta_0^2}$ sets $\iJ\in\cJ$ such that $\el{\iJ,c}>0$.
\item\label{en:Cortez}For any edge labels $c$ and $c'$ with $|c-c'|\le 2m$, and any $\iJ\in\cJ$ we have $\big|\el{\iJ,c}-\el{\iJ,c'}\big|\le\frac{2m}{\ell^2}$. 
\end{enumerate}	
\end{fact}
\begin{proof}
Part~\ref{en:Ocasio} is obvious. For part~\ref{en:Cortez}, we can assume that $\iJ$ precedes $\overline{\iJ}$, and that $c\le c'$. Lets expand the nominators in~\eqref{eq:AmiSlips} corresponding to $\el{\iJ,c}$ and $\el{\iJ,c'}$. We see that for all but at most $2m$ many pairs $(a,a')\in \iJ \times \overline{\iJ}$ satisfying $a'-a=c$ we also have that $(a,a'+c'-c)\in \iJ \times \overline{\iJ}$, and conversely, for all but at most $2m$ many pairs $(a,a')\in \iJ \times \overline{\iJ}$ satisfying $a'-a=c'$ we also have that $(a,a'+c-c')\in \iJ \times \overline{\iJ}$. This proves the statement.
\end{proof}

We consider this a `small error', and use the approximation $\sum_{c\in\iIe}\el{\iJ(v_i),c}\approx m\el{\iJ(v_i),\min(\iIe)}$ for each $\iIe\in\cIe$ in order to simplify the definition of $\Core$ below. Our two correction distributions are then defined by the following formulas,
\reminder{$\Corv$}
\reminder{$\Core$}
\begin{align}
 \label{eq:corv}\Prob[\Corv=I]&=\frac{1-\tfrac{m}{\ell}|\{\iJ\in\cJ:\iI\subset \iJ\}|}{|\cJ|}&\mbox{for each $\iI\in\cI$,}\\
 \label{eq:core}\Prob[\Core=\iIe]&=\frac{1-m\sum_{\iJ\in\cJ}\el{\iJ,\min(\iIe)}}{|\cJ|}&\mbox{for each $\iIe\in\cIe$,}
\end{align}
and $\Prob[\Corv=\ast]=1-\sum_{\iI\in\cI}\Prob[\Corv=\iI]$, and $\Prob[\Core=\ast]=1-\sum_{\iIe\in\cIe}\Prob[\Core=\iIe]$. Let us briefly justify that these are really probability distributions, that is, that these formulae are all non-negative. By construction of $\cI$ and $\cJ$, each interval of $\cI$ is in at most $\tfrac{\ell}{m}$ intervals of $\cJ$, so that the expression in~\eqref{eq:corv} is nonnegative. Similarly, by construction $\sum_{\iJ\in\cJ}\el{\iJ,\min(\iIe)}$ is at most $\tfrac{1}{m}$ for each $\iIe\in\cIe$, so that~\eqref{eq:core} is nonnegative. Note that for `most' $\iI$ and $\iIe$, actually~\eqref{eq:corv} and~\eqref{eq:core} evaluate to zero. Now, we have
\begin{equation*}
\Prob[\Corv=\ast]=1-\frac{|\cI|-\tfrac{m}{\ell}|\{(\iI,\iJ)\in\cI\times\cJ:\iI\subset\iJ\}|}{|\cJ|}.
\end{equation*}
Any $\iJ \in \cJ$ contains exactly $\tfrac{\ell}{m}$ intervals $\iI$ from $\cI$. Hence 
\begin{equation}
\label{eq:probcorvast}
 \Prob[\Corv=\ast]=\frac{|\cJ|-|\cI|+|\cJ|}{|\cJ|}
\end{equation}
which is nonnegative (and in fact very close to $1$) by~\eqref{eq:IJsize} and~\eqref{eq:n0}. Finally, we have
\begin{equation*}
\Prob[\Core=\ast]=1-\frac{|\cIe|-m\sum_{\iIe\in\cIe}\sum_{\iJ\in\cJ}\el{\iJ,\min(\iIe)}}{|\cJ|}.
\end{equation*}
Fix $\iJ \in \cJ$ and consider the set of differences $\{km : km = |a - a'|, (a,a')\in\iJ\times\overline{\iJ}\}$. This is a set of size $2(\tfrac{\ell}{m} - 1) + 1$. The largest and the smallest labels in this set can be written as a difference of elements from $\iJ\times\overline{\iJ}$ in exactly $m$ ways each, the second largest and second smallest labels can be written as a difference in $2m$ ways each, and so on. For any $\iIe\in\cIe$, $\min(\iIe)$ is a multiple of $m$, hence
\begin{equation}
\label{eq:probcoreast}
 \Prob[\Core=\ast]=\frac{|\cJ|-|\cIe|+m|\cJ|\sum_{j=-\ell/m}^{\ell/m}\tfrac{\ell-|jm|}{\ell^2} }{|\cJ|}=\frac{2|\cJ|-|\cIe|}{|\cJ|}\,,
\end{equation}
which again by~\eqref{eq:IJsize} and~\eqref{eq:n0} is very close to $1$, and in particular nonnegative.

\medskip

Given sets $\setVert\subset\bbA$ and $\setEdge\subset\bbC$, an interval $I\subset\bbA$ and a label $a$, we say that $a'\in\bbA$ is \reminder{admissible label}\emph{admissible for $a$ and $I$ with respect to $\setVert$ and $\setEdge$} if $a'\in A\cap I$ and $|a'-a|\in C$. We let the set of admissible vertices for $a$ and $I$ with respect to $\setVert$ and $\setEdge$ be $\adm(a,I;\setVert,\setEdge)$. Observe that we have 
\begin{equation}\label{eq:XthreeAdm}
|\Xthree{a,I}(\setVert,\setEdge)|=|\adm(a,I;\setVert,\setEdge)|\;.
\end{equation}

\medskip

We generate a labelling of $V(T)$ by Algorithm~\ref{alg:label}. For $t=1$, at lines~\ref{line:chlabel} and~\ref{line:edgerem} of Algorithm~\ref{alg:label} we define 
\begin{equation}\label{eq:AlgTOne}
\adm(\psi_0(\prt{v_1}),\iJ(v_1);\setVert_1,\setEdge_1):=\iJ(v_1)\quad\mbox{and}\quad
\{|\psi_1(\prt{v_1})-a|\}:=\emptyset\;.
\end{equation}

\definecolor{cadet}{rgb}{0.33, 0.41, 0.47}
\algrenewcomment[2]{\hspace{#1}\textcolor{cadet}{\(\triangleright\) #2}}
\begin{algorithm}[h]
\caption{Labeling of $T$.}
\label{alg:label}
 Let $\psi_0:=\emptyset$, let $\setVert_1:=\bbA$ and let $\setEdge_1:=\bbC$ \;
 \ForEach{$t=1,\dots,n$}{
  Choose\label{line:chlabel} $a\in\adm\big(\psi_{t-1}\big(\prt{v_t}\big),\iJ(v_t);\setVert_t,\setEdge_t\big)$ uniformly at random\Comment{0.45cm}{may fail, \hspace{5cm} \phantom{a} \hspace{10.3cm}  see~\eqref{eq:AlgTOne} for $t=1$}\;
  $\psi_t:=\psi_{t-1}\cup\{v_t\mapsto a\}$\Comment{5.15cm}{enhance the partial labelling}\;
  $\setVert_t^r:=\setVert_t\setminus\{a\}$ \Comment{5.3cm}{remove corresponding vertex label}\;
  $\setEdge_t^r:=\setEdge_t\setminus\{|\psi_t(\prt{v_t})-a|\}$ \label{line:edgerem} \Comment{0.7cm}{remove corresponding edge label, see~\eqref{eq:AlgTOne} for $t=1$}\;
  Sample $x$ from $\Corv$ \label{line:corvjohny}\Comment{5.62cm}{correction on vertex labels}\;
  \If{$x=I\subset\cI$}{
   Choose \label{line:corv} $r^v_{t}\in I\cap \setVert_t^r$ uniformly at random \Comment{4.6cm}{may fail}\;
   $\setVert_{t+1}:=\setVert_t^r\setminus \{r^v_{t}\}$ \;
  }
  \Else{
   $\setVert_{t+1}:=\setVert_t^r$, $r^v_t:=\ast$ \;
  }
  Sample $y$ from $\Core$  \label{line:15}\Comment{5.8cm}{correction on edge labels}\;
  \If{$y=\iIe\in\cIe$}{
   Choose \label{line:core} $r^e_{t}\in \iIe\cap \setEdge_t^r$ uniformly at random      \label{line:17}\Comment{4.5cm}{may fail}\;
   $\setEdge_{t+1}:=\setEdge_t^r\setminus\{r^e_{t}\}$ \;
  }
  \Else{
   $\setEdge_{t+1}:=\setEdge_t^r$, $r^e_t:=\ast$ \;
  }
 }
 \Return $\psi_n$ \;
\end{algorithm}

\subsection{Probabilistic formalities}\label{ssec:ProbablisticForm}

To apply Lemma~\ref{lem:seqhoeff}, as we will want to do, we need a probability space $\Omega$ and a filtration $\cF_0,\cF_1,\dots$. Let $\Omega$ be the set of sequences of length $3n$ over the alphabet $\big[\tilde{n}\big]\cup\{\ast,\mathrm{fail}\}$. We generate a sequence in $\Omega$ from a run of Algorithm~\ref{alg:label} by recording, for each time $t=1,\dots,n$, the vertex label chosen at line~\ref{line:chlabel}, the choice of $r^v_t$ , and the choice of $r^e_t$. In the event that the algorithm fails --- which occurs when it requests to sample a uniform element from an empty set in  lines~\ref{line:chlabel}, \ref{line:corv}, or \ref{line:core} --- we record $\mathrm{fail}$ at the point when the algorithm fails and in all remaining places of the sequence. We obtain a probability measure on $\Omega$ as the probability that running Algorithm~\ref{alg:label} generates a given sequence.

When we use Lemma~\ref{lem:seqhoeff}, we will have random variables $Y_1,\dots,Y_n$ tracking sequential contributions to some parameter. Each $Y_i$ is determined by some initial segment of $\left(\big[\tilde{n}\big]\cup\{\ast,\mathrm{fail}\}\right)^{3n}$, called a \emph{history}, $\hist_i(\omega)$ of $\omega\in\Omega$, we have an estimate for $\sum_i\Exp[Y_i|\hist_{i-1}]$, and the lengths of these initial segments are monotone increasing. Since the lengths of the histories are increasing, they generate in the natural way a filtration on $\Omega$, as required for Lemma~\ref{lem:seqhoeff}.

In the rest of the paper, we will not need the details of this construction of $\Omega$, but simply the observation that conditioning on some history is equivalent to conditioning on the behaviour of Algorithm~\ref{alg:label} up to a given point, and that Lemma~\ref{lem:seqhoeff} applies to random variables of the above type.

\section{Proof of Theorem~\ref{thm:mainred}}
\label{sec:proof}
 \subsection{Technical overview}
  Before starting the proof, we give a brief overview of the structure. Ultimately, all we need to do is show that Algorithm~\ref{alg:label} runs successfully with positive probability. We will show something rather stronger, namely that in fact with high probability, at each time $t$ in the running of Algorithm~\ref{alg:label} the pair $(\setVert_t,\setEdge_t)$ is $\delta_t$-quasirandom. This is a stronger claim because this quasirandomness in particular asserts that the sets from which labels are chosen at lines~\ref{line:chlabel}, \ref{line:corv} and \ref{line:core} are non-empty.
  
  In turn, to prove $\delta_t$-quasirandomness of $(\setVert_t,\setEdge_t)$, we consider separately each $\iIe\in\cIe$ for~\ref{quasi:edged} and each $X\in\cX$ for~\ref{quasi:X}. We describe our approach for a given $X\in\cX$; that for $\iIe\in\cIe$ is analogous. We can write $\big|X(\setVert_t,\setEdge_t)\big|$ as $\big|X(\bbA,\bbC)\big|$ minus the (random) change caused by the (random) choices of $\psi_1(v_1)$, $r_1^v$, $r_1^e$, $\psi_2(v_2)$, and so on up to $r_{t-1}^e$. Thus what we want to do is estimate the sum of a collection of random variables. These random variables are sequentially dependent, so that we can use Lemma~\ref{lem:seqhoeff} to provide such an estimate. We will see that the probability bounds coming from Lemma~\ref{lem:seqhoeff} are strong enough to simply use the union bound over all choices of $X$ and $t$, completing the proof.
  
  The difficulty in this programme is that in order to apply Lemma~\ref{lem:seqhoeff} we need estimates for the expected changes at each step, conditioned on the history up to that step. In order to obtain these estimates, we need to know that $(\setVert_i,\setEdge_i)$ is $\delta_i$-quasirandom for earlier $i$. This may seem suspiciously circular: but it is not. To see this, consider the first time $t$ at which quasirandomness fails. This is the first time at which some sum of changes deviates excessively from its expected value. The probability of this event is bounded by Lemma~\ref{lem:seqhoeff} in terms of the sum of the conditional expectations of changes, and those conditional expectations are calculated assuming $\delta_i$-quasirandomness of $(\setVert_i,\setEdge_i)$ for some values $i<t$, in other words for times $i$ when, because $i<t$ and $t$ is the first time at which quasirandomness fails, we do have this quasirandomness. Lemma~\ref{lem:seqhoeff} then tells us that the event of quasirandomness first failing at time $t$ is unlikely, and sufficiently unlikely that taking a union bound over $t$ we conclude that quasirandomness failing at any time is unlikely.
  
  Let us now discuss how we obtain these sums of conditional expectations. The \emph{removal term} change caused by $r^v_i$ depends only on the sets $\setVert^r_i$ and $\setEdge^r_i$, and similarly the removal term change caused by $r^e_i$ depends only on $\setVert_{i+1}$ and $\setEdge^r_i$. It is thus quite easy to estimate the sums of conditional expectations of these changes, which we do in Claim~\ref{cl:p}. It is rather harder to estimate the change caused by $\psi_i(v_i)$, which in addition to the sets $\setVert_i$ and $\setEdge_i$ depends also on the choice of $\prt{v_i}$, which in turn depends on earlier labellings, and so on; ultimately there is some dependence on the entire labelling. Analysing this seems at first hopeless. But in fact there is only significant dependence on $\prt{v_i}$; we will show that, assuming quasirandomness, any choice of $\prt{{(\prt{v_i})}}$ leads to the conditional expectation of change when labelling $v_i$ being approximately a quantity, $p_{X,i}$, which we call the \emph{crude estimate}. This quantity $p_{X,i}$ does not depend at all on the labelling process, and thus we can quite easily estimate the sum over $i$ of the $p_{X,i}$'s. Putting this estimate together with the estimated sum of removal terms, which we do in Claim~\ref{cl:p}, yields the `correct' value of $\big|X(\bbA,\bbC)\big|-\big|X(\setVert_t,\setEdge_t)\big|$. In other words, it is enough to show that the sum of crude estimates corresponds to the actual changes caused by labelling.
  
  This is still not an easy task. We perform it in two steps. First, we argue in Claim~\ref{cl:q} that $p_{X,i}$ is approximately the expectation of change caused by labelling $v_i$, conditioned on the history up to immediately before labelling $\prt{v_i}$. We define a \emph{fine estimate} $q_{X,i}$ which corresponds to the  expectation of change caused by labelling $v_i$, conditioned on the history up to immediately after labelling $\prt{v_i}$. An application of Lemma~\ref{lem:seqhoeff} then tells us that with high probability the sum of the $p_{X,i}$'s is approximately the sum of the $q_{X,i}$'s. Now, $q_{X,i}$ is still not the conditional expectation we would like to find: some vertices may be labelled in between labelling $\prt{v_i}$ and labelling $v_i$, and these labellings, together with removal terms in the same interval, cause $q_{X,i}$ and the expectation of change caused by labelling $v_i$, conditioned on the history up to immediately before labelling $v_i$, to be different. But provided there are only few such intervening vertices, the difference is small. Our choice of order, using~\ref{pre:int}, guarantees that for most $i$ there are indeed few such intervening vertices, and we conclude that (deterministically) the sum of the $q_{X,i}$ is close to the sum over $i$ of the expectation of change caused by labelling $v_i$, conditioned on the history up to immediately before labelling $v_i$. This last sum is what we need in order to apply Lemma~\ref{lem:seqhoeff} to estimate the sum of the actual changes caused by labelling the $v_i$, which completes the proof.
  
  In total, then, since the $p_{X,i}$ are quantities independent of the labelling process we do not need to assume anything to estimate their sum in Claim~\ref{cl:p}. To show that their sum approximates the sum of the $q_{X,i}$'s with high probability, which we do in Claim~\ref{cl:q}, and to show that the sum of the $q_{X,i}$'s is with high probability close to the sum of the actual changes, we need to assume quasirandomness before the time when we label $v_i$. As this is before time $i$, as discussed this assumption is valid. 
  
 \subsection{Proof of Theorem~\ref{thm:mainred}}

 Given $\gamma>0$, let constants and sets be as defined in Setup~\ref{setup}. Given an $n$-vertex tree $T$ with $\Delta(T)\le\frac{\Pc n}{\log n}$, Lemma~\ref{lem:setup} produces an edge set  $\mathcal{R}$, an ordering $V(T)=\{v_1,\ldots,v_n\}$ and intervals $\iJ(v_i)$.
 
 In order to apply Lemma~\ref{lem:seqhoeff}, we will twice need to use the following upper bound on $\sum_{v\in V(T)}\deg(v)^2$.
 \begin{equation}\label{eq:treedegsq}
  \sum_{v\in V(T)}\deg(v)^2\le\Delta(T)\cdot \sum_{v\in V(T)}\deg(v)\le \frac{\Pc n}{\log n}
\cdot 2e(T)\le \frac{2\Pc n^2}{\log n}\,.
 \end{equation}

 We run Algorithm~\ref{alg:label}. We say the algorithm \emph{fails} if at any time a step is not possible: in other words, if the sets from which we should choose uniformly $\psi_t(v_t)$, $r^v_t$, or $r^e_t$, are empty. Then Theorem~\ref{thm:mainred} holds if with positive probability Algorithm~\ref{alg:label} does not fail. We will show that with high probability Algorithm~\ref{alg:label} maintains the property that $(\setVert_t,\setEdge_t)$ is $\delta_t$-quasirandom for each $1\le t\le n$.
 
 For each $1\le \tau\le n$, let us define the following two events:
 \begin{align}
 \nonumber
 \mathcal{W}_{<\tau}&:=\left\{\mbox{Algorithm~\ref{alg:label} has not failed before time $\tau$}\right\}\;,\\
 \label{eq:sizeAC}
 \mathcal{U}_{\tau}&:=\left\{\mbox{we have $|\setVert_\tau|,|\setEdge_\tau|=\tilde{n}-\tau\pm 10\ell$}\right\}
 \end{align}
 
 \begin{claim}\label{cl:sizeAC}
We have for the probability of the `bad event' $\bigcup_{\tau=1}^n (\mathcal{W}_{<\tau}\setminus \mathcal{U}_{\tau})$ we have
$$\Prob\left[\bigcup_{\tau=1}^n (\mathcal{W}_{<\tau}\setminus \mathcal{U}_{\tau})\right]\le n^{-1}\;.$$

 \end{claim}
 Rephrasing the claim, with probability at least $1-n^{-1}$, at each time $1\le \tau\le n$, it holds that Algorithm~\ref{alg:label} has failed before time $\tau$ or we have~\eqref{eq:sizeAC}.
 \begin{claimproof}[Proof of Claim~\ref{cl:sizeAC}] 
Observe that, unless the algorithm fails, in each step after the first (in which no edge label is given to any edge) one vertex label and one edge label is used in the labelling, so $|\setVert_\tau|\le\tilde{n}-\tau+1$ and $|\setEdge_\tau|\le|\bbC|-\tau+2=\tilde{n}-\tau+1$, as needed for the upper-bound.

Let us now turn to the lower-bound. For $t\in[n]$, let $U_t$ be defined as follows:
 \begin{enumerate}[label=(\alph*)]
 	\item\label{jh} if Algorithm~\ref{alg:label} has not failed until step $t$, let $U_t$ be the indicator that $*$ was not sampled on Line~\ref{line:corvjohny} (in step $t$),
 	\item if Algorithm~\ref{alg:label} has failed before step $t$, let $U_t$ be a Bernoulli random variable with success probability $\frac{2(\ell-m)}{\tilde{n}-2(\ell-m)}$ (and independent of all other random choices).
 \end{enumerate}
 For $t\in[n]$, let $W_t$ be defined as follows in the same way, except that in case~\ref{jh}, we use the indicator that $*$ was not sampled on Line~\ref{line:15}.
 
 By~\eqref{eq:IJsize},~\eqref{eq:probcorvast} and~\eqref{eq:probcoreast}, we have
 \[\Prob[\Corv\neq\ast]=\Prob[\Core\neq\ast]=\frac{2(\ell-m)}{\tilde{n}-2(\ell-m)}\,.\]
Therefore, $U_t$'s and $W_t$'s are independent Bernoulli random variable with success probability $\frac{2(\ell-m)}{\tilde{n}-2(\ell-m)}<\frac{2\ell}{n}$. By Theorem~\ref{thm:hoeff}, the probability that $\sum_{t=1}^n U_t>10\ell$ or $\sum_{t=1}^n W_t>10\ell$ is at most $n^{-1}$. That is, with probability at least $1-n^{-1}$ we have that Algorithm~\ref{alg:label} failed or $|\setVert_n|,|\setEdge_n|\ge\tilde{n}-n-10\ell$. Observe that in the case of this good event, we also get $|\setVert_\tau|,|\setEdge_\tau|\ge\tilde{n}-\tau-10\ell$ for each $\tau\le n$, no matter at which times the potential non-$\ast$ samples were sampled.
 \end{claimproof}

The next claim tells us that Algorithm~\ref{alg:label} does not fail at line~\ref{line:chlabel} (Claim~\ref{cl:sizespositive}\ref{en:sizeposA}), line~\ref{line:corv} (Claim~\ref{cl:sizespositive}\ref{en:sizeposB}), nor at line~\ref{line:core} (Claim~\ref{cl:sizespositive}\ref{en:sizeposC}).
\begin{claim}\label{cl:sizespositive}
	 Suppose that $(\setVert_t,\setEdge_t)$ is $\delta_t$-quasirandom and~\eqref{eq:sizeAC} holds. Then we have
\begin{enumerate}[label=(\alph*)]
	\item\label{en:sizeposA} $\big|\Xthree{a,I}(\setVert_t,\setEdge_t)\big|>0$,
	\item\label{en:sizeposB} $\big|\Xone{I}(\setVert_t,\setEdge_t)\big|>0$, and
	\item\label{en:sizeposC} $|\iIe\cap \setEdge_t^r|>0$.
\end{enumerate}
\end{claim}
\begin{claimproof}
 Let us first prove~\ref{en:sizeposA}. By choice of $\delta_t$, for each $a\in\bbA$ and $\iI\in\cI$, using~\ref{quasi:X} we have 
\begin{align*}
\big|\Xthree{a,I}(\setVert_t,\setEdge_t)\big|&
=\big|\Xthree{a,I}(\bbA,\bbC)\big|\cdot \left(\frac{|\setVert_t|}{\tilde{n}}\right)^2\pm\delta_t m\\
\JUSTIFY{by Lemma~\ref{lem:expressXcount}\ref{en:fullX3cI} and \eqref{eq:sizeAC}}
&\ge m\cdot \left(\frac{\tilde{n}-n-10\ell}{\tilde{n}}\right)^2-\delta_t m\\
\JUSTIFY{Setup~\ref{setup}, and $\delta_t\le \delta_n=\mu^{1/\mu}$}
&\ge \left(\gamma -10\delta_0^2-\mu^{1/\mu}\right)m>0\;.
 \end{align*}
 
Mutatis mutandis we obtain~\ref{en:sizeposB}.
 
Last, let us turn to~\ref{en:sizeposC}. We have
 $$|\iIe\cap \setEdge_t^r|= |\iIe\cap \setEdge_t|\pm 1\eqBy{\ref{quasi:edged}}
 m\cdot\frac{|\setVert_t|}{\tilde{n}}\pm\delta_t m \pm 1
 \geByRef{eq:sizeAC}m\cdot\frac{\tilde{n}-n-10\ell}{\tilde{n}}-\delta_t m-1>0\;.
 $$ 
 \end{claimproof}
Thus, in order to prove Theorem~\ref{thm:mainred}, it is enough to show that with high probability $(\setVert_t,\setEdge_t)$ is $\delta_t$-quasirandom for each $1\le t\le n$. We now embark upon proving this.
 
 Since the vertex labels $r^v_t$ and $r^e_t$ are chosen uniformly at random within intervals of respectively $\cI$ and $\cIe$, it is quite easy to analyse their effect on~\ref{quasi:edged} and~\ref{quasi:X}. It is rather harder to analyse the effect of the edge and vertex labels used at step $t$, since these are not chosen uniformly. However, the idea one should have in mind is that this choice is `close to uniform' in a sense we will make precise later, and thus it is useful to write down `crude estimates' for the effect of the vertex and edge labels used at step $t$ in the labelling which pretends these choices are really uniform. Specifically, the following estimates correspond (more or less) to the expected effect if $\phi_t(v_t)$ were chosen uniformly from the unused vertex labels in $\iJ(v_t)$, if the edge label $\big|\phi_t(v_t)-\phi_t(\prt{v_t})\big|$ were chosen to be $c\in \setEdge_t$ with probability  proportional to $\el{\iJ(v_t),c}$, independently, and if~\ref{quasi:edged} and~\ref{quasi:X} held with zero error at time $t$. Of course all these assumptions are false, but we will see that `on average' they hold, which is enough for our proof.
 
 For $\iIe\in\cIe$ and $1\le t\le n$, we define
   \reminder{$p_{\iIe,t}$}
   \reminder{$r_{\iIe,t}$}
 \begin{equation}\label{eq:edged:crude}\begin{split}
  \text{\emph{the crude estimate}}\qquad p_{\iIe,t}&:= m \cdot  \el{\iJ(v_t),\min(\iIe)} \quad\text{and}
  \\
  \text{\emph{the removal term}}\qquad r_{\iIe,t}&:=\ind_{r^e_{t}\in\iIe}\,.
 \end{split}\end{equation}
 The crude estimate is an idealised version of the expected number of edge labels in $\iIe$ that are used at time $t$ (equivalently, to the probability that at time $t$ we use an edge label in the interval $\iIe$). The removal term is the indicator of the event that at time $t$ we remove an edge label from $\iIe$. For $X\in\cX$ we define similar terms, with the same intent. Again, the crude estimate $p_{X,t}$ is an estimate for the expected change $\big|X(\setVert_t,\setEdge_t)\big|-\big|X(\setVert^r_t,\setEdge^r_t)\big|$, and the removal term $r_{X,t}$ is the actual change $\big|X(\setVert^r_t,\setEdge^r_t)\big|-\big|X(\setVert_{t+1},\setEdge_{t+1})\big|$. For the latter, recall that $\setVert_t^r$ and $\setEdge_t^r$ are the available vertex and edge labels, respectively, at time $t$ after removing the vertex label and edge label used in labelling $T$. So,
   \reminder{$p_{X,t}$}
   \reminder{$r_{X,t}$}
 \begin{equation}\label{eq:struct:crude}\begin{split}
  p_{X,t}&:=\tfrac{|X(\bbA,\bbC)|(\tilde{n}-t)^{\free(X)-1}}{\tilde{n}^{\free(X)-1}}\Big(\sum_{I\in\freev(X)}\tfrac{\ind_{I\subset\iJ(v_t)}}{\ell}+\sum_{\ed\in\freee(X)}\el{\iJ(v_t),\diff(\ed;X)}  \Big),   \text{and}\\
\quad r_{X,t}&:=\big|\big\{X'\in X(\setVert_t^r,\setEdge_t^r):r^v_t\in\chosenv{X'}{X}\text{ or }r^e_t\in\chosene{X'}{X}\big\}\big| \,.
 \end{split}\end{equation}
 
 Note that the crude estimates $p_{\iIe,t}$ and $p_{X,t}$ are determined before the algorithm starts, and so are their corresponding partial sums $\sum_{i=1}^tp_{\iIe,i}$ and $\sum_{i=1}^tp_{X,i}$. By contrast, the partial sums of the removal terms, $\sum_{i=1}^tr_{\iIe,i}$ and $\sum_{i=1}^tr_{X,i}$, are sums of random variables which in principle depend upon all of the random choices we make throughout the labelling. However, if we assume that $(\setVert_i,\setEdge_i)$ is $\delta_i$-quasirandom for each $1\le i\le t-1$ then we can obtain good bounds on these partial sums which hold with high probability by considering only the choice of the $r^e_i$ and $r^v_i$.
 
 We estimate the partial sums $\sum_{i=1}^tr_{\iIe,i}$ and $\sum_{i=1}^tr_{X,i}$ together. The reason is that eventually we will be able to show that (for example) $\sum_{i=1}^tp_{\iIe,i}$ is with high probability a good estimate for the number of edge labels in $\iIe$ used in the labelling up to time $t$, and it follows that $\sum_{i=1}^t(p_{\iIe,i}+r_{\iIe,i})$ is a good estimate for $|\bbC\cap \iIe|-|\setEdge_t\cap\iIe|$, which is what we want to know in order to verify~\ref{quasi:edged}. Recall that in the introduction we mentioned that our proof can be seen as an application of the Differential Equations Method. This claim is where we (implicitly) verify that the crude estimates we chose actually correspond to solutions to some differential equations: one should understand the right hand sides of~\eqref{eq:sumpedged} and~\eqref{eq:sumpX} as (what we expect for) the differences $|\bbC\cap \iIe|-|\setEdge_t\cap\iIe|$ and $|X(\bbA,\bbC)|-|X(\setVert_t,\setEdge_t)|$, respectively.
 
 \begin{claim}\label{cl:p}
  With probability at least $1-2n^{-1}$, for each $\iIe\in\cIe$, each $X\in\cX$, and each $1\le t\le n$, if $(\setVert_i,\setEdge_i)$ is $\delta_i$-quasirandom for each $1\le i<t$ we have
  \begin{align}
   \label{eq:sumpedged}\sum_{i=1}^t(p_{\iIe,i}+r_{\iIe,i})&=\frac{t}{|\cJ|}\pm\tfrac14\delta_t m\;\mbox{, and}\\
   \label{eq:sumpX}\sum_{i=1}^t(p_{X,i}+r_{X,i})&=|X(\bbA,\bbC)|\frac{\tilde{n}^{\free(X)}-(\tilde{n}-t)^{\free(X)}}{m|\cJ|\tilde{n}^{\free(X)-1}}\pm\tfrac14\delta_t m\,.
 \end{align}
 \end{claim}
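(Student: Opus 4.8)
The plan is to prove both estimates by isolating the deterministic part (the crude estimates $p$) from the random part (the removal terms $r$), showing the former evaluates to the main term and the latter is concentrated around a small contribution that gets absorbed into the error term. First I would handle the crude estimates. For~\eqref{eq:sumpedged}, summing $p_{\iIe,i} = m\el{\iJ(v_i),\min(\iIe)}$ over $i=1,\dots,t$ and using~\ref{pre:chJ} (applied to the interval $S = [t] \subset [n]$, which tells us each $\iJ \in \cJ$ is used as $\iJ(v_i)$ approximately $\tfrac{t}{|\cJ|}$ times) we get $\sum_{i=1}^t p_{\iIe,i} \approx \tfrac{t}{|\cJ|}\sum_{\iJ\in\cJ} m\el{\iJ,\min(\iIe)}$, with an error controlled by $\delta^2 n$ times the number of nonzero terms, which is $O(\delta_0^{-2})$ by~\eqref{eq:twoJ}. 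By the computation carried out for~\eqref{eq:probcoreast}, $\sum_{\iJ\in\cJ} m\el{\iJ,\min(\iIe)}$ is exactly $1$ minus $|\cJ|\Prob[\Core=\iIe]$; so the crude part already accounts for the full $\tfrac{t}{|\cJ|}$ except for a $\tfrac{t}{|\cJ|}\Prob[\Core=\iIe]$ discrepancy, which is precisely what the removal terms must supply. For~\eqref{eq:sumpX}, the analogous computation sums $p_{X,i}$ over $i$, again using~\ref{pre:chJ} and the identities $\sum_{\iJ}\ind_{I\subset\iJ} = \ell/m$ (each $\iI\in\cI$ lies in $\ell/m$ intervals of $\cJ$) and $\sum_{\iJ}\el{\iJ,\diff(\ed;X)} = \tfrac1m(1-|\cJ|\Prob[\Core=\cdot])$; here the telescoping factor $(\tilde n - i)^{\free(X)-1}/\tilde n^{\free(X)-1}$ in $p_{X,i}$ must be summed, and I would replace it by its value $(\tilde n-t)^{\free(X)}$-type expression via the discrete identity $\sum_{i=1}^t(\tilde n-i)^{k-1} \approx \tfrac1k(\tilde n^k - (\tilde n - t)^k)$ up to an $O(\tilde n^{k-1})$ error (and $\free(X) \le 3$), noting~\eqref{eq:XACupper} bounds $|X(\bbA,\bbC)|$ by $m$.

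Next I would bound the removal terms. The key point, emphasised in the text, is that $r_{\iIe,i} = \ind_{r^e_i \in \iIe}$ depends only on $A_i^r$, $C_i^r$ and the sample $y$ from $\Core$, while $r_{X,i}$ similarly depends only on $A_i^r,C_i^r$ and the samples $x$ from $\Corv$ and $y$ from $\Core$. Conditioning on the history up to immediately after labelling $v_i$ (so that $A_i^r,C_i^r$ are determined), the conditional expectation of $r_{\iIe,i}$ is $\Prob[\Core=\iIe]\cdot\tfrac{|\iIe\cap C_i^r| - 1}{|\iIe\cap C_i^r|}\cdot(\text{something}) $ — more carefully, it is $\Prob[\Core=\iIe]$ times the probability the chosen $r^e_i$ lands in $\iIe$, which (conditioned on $\Core = \iIe$) is $1$ if $\iIe\cap C_i^r \neq \emptyset$; so the conditional expectation is essentially $\Prob[\Core=\iIe]$ provided $\iIe \cap C_i^r$ is nonempty, and $\delta_i$-quasirandomness via~\ref{quasi:edged} and~\eqref{eq:sizeAC} guarantees nonemptiness. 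Hence $\sum_{i=1}^t \Exp[r_{\iIe,i} \mid \hist_i] = t\,\Prob[\Core=\iIe] \pm (\text{tiny})$, and this combines with the crude-estimate computation to give exactly $\tfrac{t}{|\cJ|}$ as the target for $\sum(p_{\iIe,i}+r_{\iIe,i})$. I would then apply Lemma~\ref{lem:seqhoeff} with $Y_i = r_{\iIe,i}$, $a_i = 1$, $\mu = t\,\Prob[\Core=\iIe]$, some small $\nu$, and event $\cE$ being `$(A_i,C_i)$ is $\delta_i$-quasirandom for $i<t$'; since $\sum a_i^2 \le n$ and we want deviation $\ge \tfrac18\delta_t m$ (note $m = \delta_0^4 n$ so this is $\Omega(n)$), the failure probability is at most $2\exp(-\Omega(\delta_t^2 m^2 / n)) = 2\exp(-\Omega(\delta_t^2 \delta_0^8 n))$, which by~\eqref{eq:n0} beats $n^{-10}$ comfortably. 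The same applies to $r_{X,i}$, where the conditional expectation computation uses the second key property stated after~\eqref{eq:XACupper} (each free vertex/label lands in at most one/two structures) to see that $\Exp[r_{X,i}\mid\hist_i]$ is, up to negligible error, exactly the quantity that when summed with $\sum p_{X,i}$ yields the stated main term, and the bound~\eqref{eq:XACupper} ensures $r_{X,i} \le 2m =: a_i$ so $\sum a_i^2 \le 4m^2 n$ and Lemma~\ref{lem:seqhoeff} again gives a bound like $2\exp(-\Omega(\delta_t^2))$... here one must be careful: with $a_i = 2m$ we get failure probability $2\exp(-\Omega(\delta_t^2 m^2/(m^2 n))) = 2\exp(-\Omega(\delta_t^2))$, which is \emph{not} summable, so instead one uses that $r_{X,i}$ is nonzero only when $\iJ(v_i)$ is one of the $O(\ell/m)$ intervals relevant to $X$, making $\sum a_i^2 \le O(m^2 \cdot \tfrac{t}{|\cJ|}\cdot\ell/m) = O(mt\ell)$, and then the exponent becomes $\Omega(\delta_t^2 m^2/(mn\ell)) = \Omega(\delta_t^2 \delta_0^2)$, still only borderline — so in fact one should use that $r_{X,i} \le m$ (not $2m$) always by~\eqref{eq:XACupper}, and that it is nonzero only for the $O(\delta_0^{-2})$ relevant intervals $\iJ$, i.e. for $O(\tfrac{t}{|\cJ|}\delta_0^{-2}) + O(\delta^2 n)$ values of $i$ by~\ref{pre:chJ}, giving $\sum a_i^2 = O(m^2\delta^2 n)$ hence exponent $\Omega(\delta_t^2 m^2/(m^2\delta^2 n)) = \Omega(\delta_t^2 \delta^{-2})$... the precise accounting is the kind of routine-but-delicate bookkeeping I would work through carefully in the writeup.

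Finally I would assemble: take a union bound over the $|\cIe| \le n$ choices of $\iIe$, the $|\cX| = O(\tilde n^3 \cdot \tilde n \cdot (\tilde n/m)^2) = n^{O(1)}$ choices of $X$, and the $n$ choices of $t$; since each individual failure probability is at most (say) $n^{-10}$ by the concentration estimates above, the total failure probability is at most $n^{-1}$ for each of~\eqref{eq:sumpedged} and~\eqref{eq:sumpX}, giving the claimed $1-2n^{-1}$. The conditional-expectation bounds feed into Lemma~\ref{lem:seqhoeff} exactly as set up in the ``Probabilistic formalities'' subsection, with histories taken up to immediately after labelling $v_i$ (which determines $A_i^r, C_i^r$ but leaves $x,y$ free), and the event $\cE$ encoding the quasirandomness hypothesis — matching the phrasing that the bound only needs to hold ``if $(A_i,C_i)$ is $\delta_i$-quasirandom for $1\le i<t$''. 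The main obstacle, as the error-term discussion above already signals, is getting the variance proxy $\sum a_i^2$ in the application of Lemma~\ref{lem:seqhoeff} small enough: naively $a_i$ is as large as $m = \Theta(n)$ and $\sum a_i^2$ as large as $n^3$, which gives only $\exp(-\Omega(1))$-type bounds, useless against a union bound of polynomial size. The resolution is to exploit that $r_{\iIe,i}$ and $r_{X,i}$ are nonzero only for a \emph{sparse} set of steps $i$ — those where $\iJ(v_i)$ is one of the boundedly many intervals of $\cJ$ for which the relevant $\el{\iJ,\cdot}$ or containment is positive — and to control the size of this set via property~\ref{pre:chJ} from Lemma~\ref{lem:setup}, together with the bound~\eqref{eq:XACupper} on individual structure counts; tracking these sparsity factors through the exponent to confirm it exceeds $10\log n$ (using $n > n_0 = 2^{10/\eta^2}$ from~\eqref{eq:n0}) is the crux of the proof.
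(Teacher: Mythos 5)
Your treatment of \eqref{eq:sumpedged} is essentially sound and close to the paper's (the paper in fact gets the removal part directly from Theorem~\ref{thm:hoeff}, since the $\Core$-samples are independent of everything else; your use of Lemma~\ref{lem:seqhoeff} there would also work). The gap is in \eqref{eq:sumpX}, at exactly the point you defer as ``routine-but-delicate bookkeeping''. You bound $r_{X,i}\le 2m$, apparently confusing the removal term with the total structure count $|X(A,C)|\le m$. In fact $r_{X,i}$ counts only those $X'\in X(A_i^r,C_i^r)$ whose chosen labels contain the \emph{single} removed vertex label $r^v_i$ or the \emph{single} removed edge label $r^e_i$; by \eqref{eq:UpperBoundX} a given vertex label lies in $\chosenv{X'}{X}$ for at most one $X'$ (the free vertex labels of $X\in\cX$ are disjoint members of $\cI$), and a given edge label lies in $\chosene{X'}{X}$ for at most four $X'$. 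So $r_{X,i}$ is bounded by an absolute constant, Lemma~\ref{lem:seqhoeff} applies with $\sum_i a_i^2=O(n)$, and deviations of order $\delta^2 n$ have probability $\exp(-\Omega(\delta^4 n))\le n^{-10}$ --- this is precisely how the paper closes the step you could not. Your proposed rescue is moreover based on a false premise: whether $r_{X,i}$ is nonzero is determined by the outcomes of $\Corv$ and $\Core$ (and where $r^v_i,r^e_i$ land), which have nothing to do with $\iJ(v_i)$; it is the crude terms $p_{X,i}$, not the removal terms, that vanish unless $\iJ(v_i)$ is one of the few intervals relevant to $X$, so \ref{pre:chJ} gives you no sparsity here. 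Even on your own terms the exponent does not close: with $a_i=m$ on $O(\delta^2 n)$ steps you get $\delta_t^2m^2/(m^2\delta^2 n)=\delta_t^2/(\delta^2 n)\to 0$ (you dropped the factor $n$ when you wrote $\Omega(\delta_t^2\delta^{-2})$, and even that would be a constant, useless against a polynomial union bound).

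Two further points you gloss over, both of which the paper handles by partitioning $[t]$ into blocks $S_j$ of length $\delta n$. First, the conditional expectation of $r_{X,i}$ is roughly $\sum_{I\in\freev(X)}\Prob[\Corv=I]\,|X(A_i^r,C_i^r)|/|A_i^r\cap I|$ plus an analogous $\Core$-term, and estimating it requires the quasirandomness hypothesis (\ref{quasi:X}, \ref{quasi:edged}) together with Claim~\ref{cl:sizeAC}; since these quantities drift like $(\tilde{n}-i)^{\free(X)}$, one must work blockwise and finish with an integral comparison of $\sum_i(\tilde{n}-i)^{\free(X)-1}$ --- the same blockwise use of \ref{pre:chJ} is needed to evaluate $\sum_i p_{X,i}$, because the factor $(\tilde{n}-i)^{\free(X)-1}$ multiplies a $\iJ(v_i)$-dependent factor and cannot be separated over all of $[t]$ at once. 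Second, your identity that each $\iI\in\cI$ lies in exactly $\ell/m$ intervals of $\cJ$ is false: intervals near the extremes and the centre of $\bbA$ lie in fewer, and the deficit $1-\tfrac{m}{\ell}|\{\iJ\in\cJ:\iI\subset\iJ\}|$ is exactly what $\Corv$ is designed to supply; with your equality the crude sum alone would already produce the full vertex-label main term and the $\Corv$-removal contribution would overshoot it for such structures.
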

 \begin{claimproof}[Proof of Claim~\ref{cl:p}]
We have
\begin{align}
\nonumber
  \sum_{i=1}^t p_{\iIe,i}
  &=
  \sum_{i=1}^t \sum_{\iJ\in\cJ} \ind_{\iJ(v_i)=\iJ} \cdot p_{\iIe,i}\\
\nonumber  
\JUSTIFY{by~\eqref{eq:edged:crude}}  &=
    \sum_{i=1}^t \sum_{\iJ\in\cJ} \ind_{\iJ(v_i)=\iJ}\cdot m \el{\iJ,\min(\iIe)}\\
\label{eq:HarryPotter1}    
  \JUSTIFY{by~\ref{pre:chJ}, with $S=[t]$}&=
  \big(\tfrac{t}{|\cJ|}\pm\delta^2 n\big)\sum_{\iJ\in\cJ}m\el{\iJ,\min(\iIe)}\,.
\end{align}
Let us now turn to the quantity $\sum_{i=1}^tr_{\iIe,i}$. We have
\begin{align*}
\Exp\left[\sum_{i=1}^t r_{\iIe,i}\right]
&\eqByRef{eq:edged:crude}
\Exp\left[\sum_{i=1}^t \ind_{r^e_{i}\in\iIe}\right]\\
\JUSTIFY{by the way $r^e_{i}$ is chosen on lines~\ref{line:15} and~\ref{line:17} of Algorithm~\ref{alg:label}}&=
t\cdot \Prob[\Core=\iIe]\\
\JUSTIFY{by~\eqref{eq:core}}&=t\cdot \frac{1-\sum_{\iJ\in\cJ}m\el{\iJ,\min(\iIe)}}{|\cJ|}\,.
\end{align*}
The events $\{r^e_{i}\in\iIe\}_{i=1}^t$ are independent, and thus Theorem~\ref{thm:hoeff} gives us that 
$$\sum_{i=1}^t r_{\iIe,i}=t\cdot \frac{1-\sum_{\iJ\in\cJ}m\el{\iJ,\min(\iIe)}}{|\cJ|}\pm \delta^2n$$
with probability at least $1-n^{-10}$.
Putting this together with~\eqref{eq:HarryPotter1}, we get that~\eqref{eq:sumpedged} holds for that $t$ and that $\iIe$ with probability at least $1-n^{-10}$. Taking the union bound over all choices of $t$ and $\iIe\in\cIe$, we see that with probability at least $1-n^{-2}$,~\eqref{eq:sumpedged} holds for all $t$ and $\iIe$ as desired.
  
  For~\eqref{eq:sumpX} we need to be a little more careful: the quantity $p_{X,i}$ depends on $i$ as well as $\iJ(v_i)$, and (with similar effect) the quantity $r_{X,i}$ depends on $|X(\setVert_i^r,\setEdge_i^r)|$ and $|\setVert_i^r\cap \iI|$ and $|\setEdge_i^r\cap \iIe|$ for each $\iI\in\cI$ and $\iIe\in\cIe$ as well as the outcomes of $\Corv$ and $\Core$. We divide the interval $[t]$ into intervals \reminder{$S_j$}$S_1,\dots,S_{\lceil t/\delta n\rceil}$, all except possibly the last consisting of $\delta n$ elements. Note that by assumption $\delta n$ is an integer which divides $n$. The point of doing this is that any time $i\in S_j$, $|X(\setVert_i^r,\setEdge_i^r)|$, $|\setVert_i^r\cap \iI|$ and $|\setEdge_i^r\cap \iIe|$ are up to a small error constant on any $S_j$.
  
  For $i\in S_j$, since $\tilde{n}-j\delta n\ge\gamma n$, we have $$\tilde{n}-i\le \tilde{n}-j\delta n+\delta n\le (\tilde{n}-j\delta n)\big(1+\tfrac{\delta}{\gamma}\big)\;.$$ Thus, for any $1\le s\le 3$,
  \[
   (\tilde{n}-i)^{s}\le(\tilde{n}-j\delta n)^s\big(1+\tfrac{\delta}{\gamma}\big)^s\le \big(1+8\delta\gamma^{-1}\big)(\tilde{n}-j\delta n)^s\,,
  \]
  and hence $(\tilde{n}-i)^s=\big(1\pm 8\delta\gamma^{-1}\big)(\tilde{n}-j\delta n)^s$. Using this and~\ref{pre:chJ}, for each $X\in\cX$ we have
  \begin{align}
  \begin{split}
  \label{eq:p:pXintro}
   \sum_{i\in S_j}p_{X,i}&=\big(1\pm \tfrac{8\delta}{\gamma}\big)\frac{|X(\bbA,\bbC)|(\tilde{n}-j\delta n)^{\free(X)-1}}{\tilde{n}^{\free(X)-1}}\sum_{I\in\freev(X)}\big(\tfrac{|S_j|}{|\cJ|}\pm\delta^2 n\big)\frac{|\{\iJ\in\cJ:I\subset\iJ\}|}{\ell}\\
   &+\big(1\pm \tfrac{8\delta}{\gamma}\big)\frac{|X(\bbA,\bbC)|(\tilde{n}-j\delta n)^{\free(X)-1}}{\tilde{n}^{\free(X)-1}}\sum_{\ed\in\freee(X)}\sum_{\iJ\in\cJ}\big(\tfrac{|S_j|}{|\cJ|}\pm\delta^2 n\big)\el{\iJ,\diff(\ed;X)}
   \end{split}\\[4mm] 
\begin{split}   \label{eq:p:pX}
   &=\frac{|X(\bbA,\bbC)|(\tilde{n}-j\delta n)^{\free(X)-1}}{\tilde{n}^{\free(X)-1}}\sum_{I\in\freev(X)}\frac{|S_j|\cdot|\{\iJ\in\cJ:I\subset\iJ\}|}{|\cJ|\ell}\\
   &+\frac{|X(\bbA,\bbC)|(\tilde{n}-j\delta n)^{\free(X)-1}}{\tilde{n}^{\free(X)-1}}\sum_{\ed\in\freee(X)}\sum_{\iJ\in\cJ}\tfrac{|S_j|}{|\cJ|}\el{\iJ,\diff(\ed;X)}\pm 100\delta^2n\gamma^{-1}\,.
  \end{split}\end{align}
  Let us hint where the value of the final error term $\pm 100\delta^2n\gamma^{-1}$ in~\eqref{eq:p:pX} comes from. There are two error terms in~\eqref{eq:p:pXintro}. To bound the error introduced by the term $\pm \tfrac{8\delta}{\gamma}$, we use that 
  \begin{align*}
&\frac{|X(\bbA,\bbC)|(\tilde{n}-j\delta n)^{\free(X)-1}}{\tilde{n}^{\free(X)-1}}\leBy{F\ref{fact:zada}\ref{en:ZadaB}} m\;, \\
& \sum_{I\in\freev(X)}\big(\tfrac{|S_j|}{|\cJ|}\pm\delta^2 n\big)\cdot\frac{|\{\iJ\in\cJ:I\subset\iJ\}|}{\ell}
 \le 2\big(\tfrac{\delta n}{2\frac{n}m}+\delta^2 n\big)\cdot\frac{\delta_0^{-2}}{\ell}\le 5 \delta\;\text{, and}\\
&\sum_{\ed\in\freee(X)}\sum_{\iJ\in\cJ}\big(\tfrac{|S_j|}{|\cJ|}\pm\delta^2 n\big)\cdot \el{\iJ,\diff(\ed;X)}\leBy{F\ref{fact:alexandria}\ref{en:Ocasio}} 2\cdot \nicefrac{2}{\delta_0^2}\cdot \big(\tfrac{\delta n}{2\frac{n}m}+\delta^2 n\big)\cdot \frac{1}{\ell}\le 5\delta\;.
  \end{align*}
The error coming from the term $\pm \delta^2n$ can be bounded similarly.
  
   We now estimate $\sum_{i\in S_j}r_{X,i}$. We will use Lemma~\ref{lem:seqhoeff} to do this. To that end, for each $i=0,\dots,n-1$, let $\hist_i$ be the history up to and including the choice of $\psi_{i+1}(v_{i+1})$, and let $\hist_n$ be the complete history. Hence, the difference between $\hist_{n-1}$ and $\hist_{n}$ is only in the information about the choice of $r^v_n$ and $r^e_n$. Let $\cE_t$ be the event that $(\setVert_i,\setEdge_i)$ is $\delta_i$-quasirandom for each $1\le i<t$ from which we subtract the event $\bigcup_{\tau=1}^n (\mathcal{W}_{<\tau}\setminus \mathcal{U}_{\tau})$ from Claim~\ref{cl:sizeAC}.\footnote{We emphasize that the event $\bigcup_{\tau=1}^n (\mathcal{W}_{<\tau}\setminus \mathcal{U}_{\tau})$ involves conditions even on times $\tau>t$.} Suppose now that $\cE_t$ occurs. That is in the calculations below, we shall work with an arbitrary conditional subspace $\hist_i$, for some $i<t$, but only with such that $\hist_i\cap\cE_t$ has positive probability.
   
   Since $(\setVert_i^r,\setEdge_i^r)$ differs by one vertex and one edge label\footnote{with the only exception $i=1$ when we have $\setEdge_i^r=\setEdge_i$} from $(\setVert_i,\setEdge_i)$, and for any given $X\in\cX$ these two labels meet at most three $X'\in X(\setVert_i,\setEdge_i)$, we have $\big|X(\setVert_i^r,\setEdge_i^r)\big|=\big|X(\setVert_i,\setEdge_i)\big|\pm 3$. Furthermore, $(\setVert_{i+1},\setEdge_i^r)$ differs by at most one vertex label from $(\setVert_i^r,\setEdge_i^r)$, and this vertex label meets at most one $X'\in X(\setVert_i^r,\setEdge_i^r)$. Thus, using~\ref{quasi:X} to estimate $\big|X(\setVert_i,\setEdge_i)\big|$, for all $i\in S_j$ we have
  \begin{equation}\label{eq:p:sizeX}\begin{split}
   \big|X(\setVert_{i+1},\setEdge_i^r)\big|,\big|X(\setVert_i^r,\setEdge_i^r)\big|&=\frac{|X(\bbA,\bbC)| \cdot|\setVert_i|^{\free(X)}}{\tilde{n}^{\free(X)}}\pm\delta_i m \pm 4\\
   &\eqByRef{eq:sizeAC}\frac{|X(\bbA,\bbC)|(\tilde{n}-i\pm 10\ell)^{\free(X)}}{\tilde{n}^{\free(X)}}\pm\delta_i m\pm 4\\
   &=\frac{|X(\bbA,\bbC)|(\tilde{n}-j\delta n)^{\free(X)}}{\tilde{n}^{\free(X)}}\pm 2\delta_i m\;.
  \end{split}\end{equation}
  
  Given $\iI\in\cI$, by~\ref{quasi:X} with the structure $\Xone{I}$, we have
  \begin{equation}\label{eq:AcapI}
  |\setVert_i^r\cap I|=m\cdot \frac{\tilde{n}-i\pm 10\ell}{\tilde{n}}\pm\delta_i m=m\cdot \frac{\tilde{n}-j\delta n}{\tilde{n}}\pm 2\delta_i m\,.
  \end{equation}
  Using this together with~\eqref{eq:p:sizeX}, since $r_i^v$ is chosen uniformly in $\setVert_i^r\cap\iI$ for an interval $I\in\cI$ drawn from $\Corv$, we have
  \begin{align*}
    &\Exp\Big[\big|\big\{X'\in X(\setVert_i^r,\setEdge_i^r):r_i^v\in\chosenv{X'}{X}\big\}\big|\Big|\hist_{i-1}\Big]\\
   \JUSTIFY{by \eqref{eq:p:sizeX} and \eqref{eq:AcapI}} =&\sum_{I\in\freev(X)}\Prob[\Corv=I]\frac{\frac{|X(\bbA,\bbC)|(\tilde{n}-j\delta n)^{\free(X)}}{\tilde{n}^{\free(X)}}\pm 2\delta_i m}{m\tfrac{\tilde{n}-j\delta n}{\tilde{n}}\pm 2\delta_i m}\\
   \JUSTIFY{{by \eqref{eq:corv}}}=
   &\Big(\sum_{I\in\freev(X)}\tfrac{1-\tfrac{m}{\ell}|\{\iJ\in\cJ:I\subset\iJ\}|}{|\cJ|}\Big)\cdot\tfrac{|X(\bbA,\bbC)|(\tilde{n}-j\delta n)^{\free(X)-1}}{m\tilde{n}^{\free(X)-1}}\pm \tfrac{10\delta_i}{\gamma|\cJ|}\,.
  \end{align*}
  
  Observe that, since the free vertex labels of $X\in\cX$ are distinct members of $\cI$, they are disjoint and hence 
  \begin{equation}\label{eq:UpperBoundX}
 \text{ any given vertex label is in $\chosenv{X'}{X}$ for at most one $X'\in X(\bbA,\bbC)$. }
  \end{equation}
  
  Let us now fix an interval $S_j$. We apply Lemma~\ref{lem:seqhoeff}, with $|S_j|$ many random variables $\big|\big\{X'\in X(\setVert_i^r,\setEdge_i^r):r_i^v\in\chosenv{X'}{X}\big\}\big|$ for $i\in S_j$ and with the event $\cE_t$. Observe that these random variables are upper-bounded by~1 by~\eqref{eq:UpperBoundX}. Then Lemma~\ref{lem:seqhoeff} states that if $\cE_t$ occurs, then with probability at least $1-2\exp\big(-\tfrac{2\delta^4n^2}{n}\big)\ge 1-n^{-10}$, we have
  \begin{equation}\label{eq:p:rvX}\begin{split}
   &\sum_{i\in S_j}\big|\big\{X'\in X(\setVert_i^r,\setEdge_i^r):r_i^v\in\chosenv{X'}{X}\big\}\big|\\
   =&|S_j|\Big(\sum_{I\in\freev(X)}\tfrac{1-\tfrac{m}{\ell}|\{\iJ\in\cJ:I\subset\iJ\}|}{|\cJ|}\Big)\cdot\tfrac{|X(\bbA,\bbC)|(\tilde{n}-j\delta n)^{\free(X)-1}}{m\tilde{n}^{\free(X)-1}}\pm\tfrac{10\delta n\delta_{j\delta n}}{\gamma|\cJ|}\pm\delta^2 n\\
   =&|S_j|\Big(\sum_{I\in\freev(X)}\tfrac{1-\tfrac{m}{\ell}|\{\iJ\in\cJ:I\subset\iJ\}|}{|\cJ|}\Big)\cdot\tfrac{|X(\bbA,\bbC)|(\tilde{n}-j\delta n)^{\free(X)-1}}{m\tilde{n}^{\free(X)-1}}\pm \tfrac{20\delta n\delta_{j\delta n}}{\gamma|\cJ|}\,.
  \end{split}\end{equation}
  
  We now turn to estimating the effects of the $r_i^e$. Let us fix an arbitrary history $\hist_{i-1}\subset \cE_t$ which leads to a given set $\setEdge_i^r$. By~\ref{quasi:edged}, for each $\iIe\in\cIe$ we have $|\setEdge_i^r\cap\iIe|=m\tfrac{\tilde{n}-j\delta n}{\tilde{n}}\pm 2\delta_i m$. Given any set $L\subset \setEdge_i^r$ of edge labels with $\max(L)-\min (L)\le m$, the set $L$ is contained in two consecutive intervals of $\cIe$. Let these be $\iIe^{(1)}$ and $\iIe^{(2)}$, and let $L_1$ and $L_2$ be the corresponding subsets of $L$. Suppose that $c\in\iIe^{(1)}\cup\iIe^{(2)}$. Then Fact~\ref{fact:alexandria}\ref{en:Ocasio} tells us that $\el{\iJ,\min(\iIe^{(1)})}=\el{\iJ,c}=\el{\iJ,\min(\iIe^{(2)})}=0$ for all but at most $2\tfrac{\ell}{m}$ choices of $\iJ\in\cJ$.  By Fact~\ref{fact:alexandria}\ref{en:Cortez} the three quantities never differ by more than $\tfrac{2m}{\ell^2}$. Thus we have
  \[\sum_{\iJ\in\cJ}m\el{\iJ,\min(\iIe^{(g)})}=\sum_{\iJ\in\cJ}m\el{\iJ,c}\pm8\tfrac{m}{\ell}\]
  for each $g=1,2$. Using this, we have for an arbitrary history $\hist_{i-1}\subset \cE_t$ which leads to the given set $\setEdge_i^r$, that
  \begin{align*}
   \Prob[r_i^e\in L|\hist_{i-1}]&=\frac{1-\sum_{\iJ\in\cJ}m\el{\iJ,\min(\iIe^{(1)})}}{|\cJ|}\cdot\frac{|L_1|}{m\tfrac{\tilde{n}-j\delta n}{\tilde{n}}\pm 2\delta_i m}\\
   &+\frac{1-\sum_{\iJ\in\cJ}m\el{\iJ,\min(\iIe^{(2)})}}{|\cJ|}\cdot\frac{|L_2|}{m\tfrac{\tilde{n}-j\delta n}{\tilde{n}}\pm 2\delta_i m}\\
   &=\frac{1\pm8\tfrac{m}{\ell}-\sum_{\iJ\in\cJ}m\el{\iJ,c}}{|\cJ|}\cdot\frac{|L|}{m\tfrac{\tilde{n}-j\delta n}{\tilde{n}}\pm 2\delta_i m}\\
   &=\frac{\big(1-\sum_{\iJ\in\cJ}m\el{\iJ,c}\big)|L|}{|\cJ|m\tfrac{\tilde{n}-j\delta n}{\tilde{n}}}\big(1\pm 10\gamma^{-1}\delta_i\big)\pm\frac{16|L|}{\ell|\cJ|\gamma}\,,
  \end{align*}
  where $c\in\iIe^{(1)}\cup\iIe^{(2)}$ is arbitrary.
  Recall that $\setVert_{i+1}=\setVert_i^r\setminus\{r_i^v\}$. Using the above calculation and~\eqref{eq:p:sizeX}, we obtain that when $i\in S_j$,
  \begin{align*}
  &\Exp\Big[\big|\big\{X'\in X(\setVert_{i+1},\setEdge_i^r):r_i^e\in\chosene{X'}{X}\big\}\big|\Big|\hist_i,r_i^v\Big]\\
   =&\Big(\tfrac{|X(\bbA,\bbC)|(\tilde{n}-j\delta n)^{\free(X)}}{\tilde{n}^{\free(X)}}\pm 2\delta_i m\Big)\sum_{\ed\in\freee(X)}\Big(\tfrac{(1-\sum_{\iJ\in\cJ}m\el{\iJ,\diff(\ed;X)}}{|\cJ|m\tfrac{\tilde{n}-j\delta n}{\tilde{n}}}\big(1\pm 10\gamma^{-1}\delta_i\big)\pm\tfrac{16}{\ell|\cJ|\gamma}\Big)\\
   =&\tfrac{|X(\bbA,\bbC)|(\tilde{n}-j\delta n)^{\free(X)}}{\tilde{n}^{\free(X)}}\sum_{\ed\in\freee(X)}\tfrac{(1-\sum_{\iJ\in\cJ}m\el{\iJ,\diff(\ed;X)}}{|\cJ|m\tfrac{\tilde{n}-j\delta n}{\tilde{n}}}\pm\tfrac{40m}{\ell|\cJ|\gamma}\pm \tfrac{50\delta_i}{\gamma|\cJ|}\\
   =&\tfrac{|X(\bbA,\bbC)|(\tilde{n}-j\delta n)^{\free(X)-1}}{\tilde{n}^{\free(X)-1}}\sum_{\ed\in\freee(X)}\tfrac{(1-\sum_{\iJ\in\cJ}m\el{\iJ,\diff(\ed;X)}}{|\cJ|m}\pm \tfrac{60\delta_i}{\gamma|\cJ|}\,.
  \end{align*}
  Now any given edge label is in at most four $X'\in X(\setVert_{i+1},\setEdge_i^r)$, and $\hist_i,r_i^v$ is a history, so by Lemma~\ref{lem:seqhoeff}, if $\cE_t$ occurs then with probability at least $1-2\exp\big(-\tfrac{2\delta^4n^2}{16n}\big)\ge 1-n^{-10}$ we have
  \begin{equation}\label{eq:p:reX}\begin{split}
   &\sum_{i\in S_j}\big|\big\{X'\in X(\setVert_i^r,\setEdge_i^r):r_i^e\in\chosene{X'}{X}\big\}\big|\\
   &=|S_j|\tfrac{|X(\bbA,\bbC)|(\tilde{n}-j\delta n)^{\free(X)-1}}{\tilde{n}^{\free(X)-1}}\Big(\sum_{\ed\in\freee(X)}\tfrac{(1-\sum_{\iJ\in\cJ}m\el{\iJ,\diff(\ed;X)})}{|\cJ|m}\Big)\pm \tfrac{60\delta n\delta_{j\delta n}}{\gamma|\cJ|}\pm\delta^2 n\\
   &=|S_j|\tfrac{|X(\bbA,\bbC)|(\tilde{n}-j\delta n)^{\free(X)-1}}{\tilde{n}^{\free(X)-1}}\Big(\sum_{\ed\in\freee(X)}\tfrac{(1-\sum_{\iJ\in\cJ}m\el{\iJ,\diff(\ed;X)})}{|\cJ|m}\Big)\pm \tfrac{70\delta n\delta_{j\delta n}}{\gamma|\cJ|}\,.
  \end{split}\end{equation}
  
  Putting together~\eqref{eq:p:pX},~\eqref{eq:p:rvX} and~\eqref{eq:p:reX}, with probability at least $1-n^{-9}$ we have that if $\cE_t$ occurs then 
  \begin{equation}\label{eq:p:Sj}
   \sum_{i\in S_j}(p_{X,i}+r_{X,i})=\frac{|X(\bbA,\bbC)|(\tilde{n}-j\delta n)^{\free(X)-1}}{\tilde{n}^{\free(X)-1}}|S_j|\cdot\frac{\free(X)}{m|\cJ|}\pm\frac{200\delta n\delta_{j\delta n}}{\gamma|\cJ|}\,.
  \end{equation}
  
  By the union bound over all $X\in\cX$, all $1\le t\le n$ and all $j$, we see that with probability at least $1-n^{-2}$, if $\cE_t$ occurs then the equation~\eqref{eq:p:Sj} holds for all $X\in\cX$, all times $t$ and all sets $S_j$. Now one part of $\cE_t$ is the assumption that $(\setVert_i,\setEdge_i)$ is $\delta_i$-quasirandom for each $1\le i<t$, and the other part is the good event of Claim~\ref{cl:sizeAC}. The latter event occurs with probability at least $1-n^{-1}$ by Claim~\ref{cl:sizeAC}, so that with probability at least $1-2n^{-1}$ the following holds. Whenever $t$ is such that $(\setVert_i,\setEdge_i)$ is $\delta_i$-quasirandom for $1\le i<t$, we have~\eqref{eq:p:Sj} for each $X\in\cX$ and $j$.
  
  Suppose now that, for some $t$, we have~\eqref{eq:p:Sj} for each $X\in\cX$ and $j$. To complete the proof of the claim, we need to show that putting together these partial sums on short intervals, we obtain the desired~\eqref{eq:sumpX}. Here we are implicitly verifying that we have a solution to a certain first-order differential equation (which we do not write down as we do not need to know it), and consequently an integral naturally appears.
  
  We have
  \[\int_{x=0}^t(\tilde{n}-x-\delta n)^{\free(X)-1}dx\le\sum_{j=1}^{\lceil\frac{t}{\delta n}\rceil}(\tilde{n}-j\delta n)^{\free(X)-1}|S_j|\le\int_{x=0}^t(\tilde{n}-x)^{\free(X)-1}dx\,.\]  
Plugging this into \eqref{eq:p:Sj}, we get
  \begin{align*}
   \sum_{i=1}^t(p_{X,i}+r_{X,i})&=\int_{x=0}^t\frac{|X(\bbA,\bbC)|(\tilde{n}-x)^{\free(X)-1}\free(X)}{\tilde{n}^{\free(X)-1}m|\cJ|}dx\pm\frac{\free(X)\delta n}{|\cJ|}\pm \sum_{j=1}^{\lceil \tfrac{t}{\delta n}\rceil}\tfrac{200\delta n\delta_{j\delta n}}{\gamma|\cJ|}\\
   &=|X(\bbA,\bbC)|\frac{\tilde{n}^{\free(X)}-(\tilde{n}-t)^{\free(X)}}{\tilde{n}^{\free(X)-1}m|\cJ|}\pm\tfrac14\delta_t m\,,
  \end{align*}
  as desired, where the final line follows from the choice of $\delta$, from~\eqref{eq:deltas}, and since $\tfrac{n}{|\cJ|}<2m$.
 \end{claimproof}
 
 We would now like to argue that $\sum_{i=1}^tp_{\iIe,i}$ is a good estimate for the number of edge labels in $\iIe$ used in the labelling process up to time $t$. However, we are not able to do this in one step. Instead, we define a \emph{fine estimate} $q_{\iIe,i}$, which plays the same r\^ole as $p_{\iIe,i}$ except that we condition on the behaviour of Algorithm~\ref{alg:label} up to and including the time $h$ at which we label $v_h=\prt{v_i}$. We will see that at least $\sum_{i=1}^tp_{\iIe,i}$ is a good estimate for $\sum_{i=1}^tq_{\iIe,i}$. We define similarly a fine estimate $q_{X,i}$ corresponding to $p_{X,i}$. We write down formulae valid for $i\ge 2$, when $v_h=\prt{v_i}$ exists. 
   \reminder{$q_{\iIe,i}$}
   \reminder{$q_{X,i}$}
  \begin{equation}\label{eq:defq}\begin{split}
  q_{\iIe,i}&:=\frac{\big|\big\{a\in\adm(\psi_h(v_h),\iJ(v_i);\setVert_h,\setEdge_h):|a-\psi_h(v_h)|\in\iIe\big\}\big|\tilde{n}^2}{|\setVert_h|^2\ell}  \\
  q_{X,i}&:=\hspace{-11mm}\sum_{a\in\adm(\psi_h(v_h),\iJ(v_i);\setVert_h,\setEdge_h)}\hspace{-11mm}\frac{\big|\big\{X'\in X(\setVert_h,\setEdge_h):a\in\chosenv{X'}{X}\text{ or }|a-\psi_h(v_h)|\in\chosene{X'}{X}\big\}\big|}{|\setVert_h|^2\ell\tilde{n}^{-2}}
 \end{split}\end{equation}
 Additionally, we define $q_{\iIe,1}=q_{X,1}=0$.
 
 Observe that $\tfrac{|\setVert_h||\setEdge_h|\ell}{\tilde{n}|\bbC|}$ is, by~\ref{quasi:X} with the structure $\Xthree{\psi_h(v_h),\iJ(v_i)}$, a good estimate for the size of the set of vertex labels from which we will label $v_i$, ignoring any changes that may occur between time $h$ and the time $i$ when we label $v_i$. Thus $q_{\iIe,i}$ is a good estimate for the expectation of labelling $v_i$ in such a way as to use an edge label in $\iIe$, if we ignore any changes that might occur between times $h$ and $i$. We will see that it is usually reasonable to ignore such changes. Similarly, $q_{X,i}$ is a good estimate for the expected number of structures following the pattern $X$ whose chosen labels contain either the vertex or edge label used at time $i$, ignoring any changes between times $h$ and $i$, and estimating the size of the set of vertex labels from which we label $v_i$ by $\tfrac{|\setVert_h||\setEdge_h|\ell}{\tilde{n}|\bbC|}$.
 
 We now show that the partial sums of the crude estimates are, with high probability, good estimates for the partial sums of the fine estimates. There are two parts to this. First, we will argue that if $v_i\prt{v_i}\not\in\mathcal{R}$, then we have $\Exp[q_{\iIe,i}|\psi_{h-1}]\approx p_{\iIe,i}$, and similarly for the $q_{X,i}$. In other words, $p_{\iIe,i}$ is a good estimate for the expectation of $q_{\iIe,i}$ conditioned on the labelling history up to the time immediately before labelling $\prt{v_i}$, whatever that history might be (as long as it maintains quasirandomness). This is the main combinatorial work in our proof. Second, we observe that the effect of the remaining terms where $v_i\prt{v_i}\in\mathcal{R}$ is small, simply because $\mathcal{R}$ is small, and apply Lemma~\ref{lem:seqhoeff} to argue that the sum of conditional expectations is with high probability close to the partial sum $\sum_{i=1}^tq_{\iIe,i}$, and similarly for the $q_{X,i}$.
 
 \begin{claim}\label{cl:q}
  With probability at least $1-4n^{-1}$ the following holds. For each $\iIe\in\cIe$, each $X\in\cX$, each $1\le t\le n$, and each $1\le k\le\big\lceil\tfrac{t}{\delta n}\big\rceil$, if $(\setVert_i,\setEdge_i)$ is $\delta_i$-quasirandom for each $1\le i<t$, we have
  \begin{align}
   \label{eq:sumqedged}\sum_{i=(k-1)\delta n+1}^{\max(t,k\delta n)}q_{\iIe,i}&=\sum_{i=(k-1)\delta n+1}^{\max(t,k\delta n)}p_{\iIe,i}\pm\frac{2000\delta_{k\delta n} \delta m}{\gamma^4}\;\mbox{, and}\\
   \label{eq:sumqX}\sum_{i=(k-1)\delta n+1}^{\max(t,k\delta n)}q_{X,i}&=\sum_{i=(k-1)\delta n+1}^{\max(t,k\delta n)}p_{X,i}\pm\tfrac{10^6\delta m \delta_{k\delta n}}{\gamma^7}\;.
  \end{align}
 \end{claim}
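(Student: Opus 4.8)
The plan is to prove~\eqref{eq:sumqedged} in full; \eqref{eq:sumqX} will follow by the same argument, the larger power of $\gamma^{-1}$ there absorbing the up to $\free(X)=3$ extra factors of $\tilde n/|A_h|$ that appear. Write $S_k:=\{(k-1)\delta n+1,\dots,\min(k\delta n,t)\}$ for the $k$-th block, $h(i)$ for the time at which $\prt{v_i}$ is labelled (so $h(i)<i$), and $\hist^\circ_i$ for the history of Algorithm~\ref{alg:label} up to immediately before labelling $\prt{v_i}$; note that $A_{h(i)}$, $C_{h(i)}$ and the label of $\prt{(\prt{v_i})}$ are $\hist^\circ_i$-measurable while $q_{\iIe,i}$ is measurable with respect to the history one step further on. I would split $\sum_{i\in S_k}q_{\iIe,i}-\sum_{i\in S_k}p_{\iIe,i}$ into three pieces: \emph{(a)} $\sum_{i\in S_k}\big(q_{\iIe,i}-\Exp[q_{\iIe,i}\mid\hist^\circ_i]\big)$, to be controlled by Lemma~\ref{lem:seqhoeff}; \emph{(b)} $\sum_{i\in S_k,\,v_i\prt{v_i}\notin\mathcal{R}}\big(\Exp[q_{\iIe,i}\mid\hist^\circ_i]-p_{\iIe,i}\big)$, to be bounded \emph{deterministically} on the event that quasirandomness holds up to time $t$; and \emph{(c)} the remaining terms with $i=1$ or $v_i\prt{v_i}\in\mathcal{R}$, to be bounded crudely using $|\mathcal{R}|\le\eps n$. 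The sequential structure noted above is exactly what makes Lemma~\ref{lem:seqhoeff} applicable to \emph{(a)}.

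The combinatorial heart is \emph{(b)}. Fixing $i\ge 2$ with $v_i\prt{v_i}\notin\mathcal{R}$, put $v_h:=\prt{v_i}$ and $v_g:=\prt{v_h}$, so $\iJ(v_i)=\overline{\iJ(v_h)}$ by~\ref{pre:comp}. I would condition on any $\hist^\circ_i$ fixing $A_h$, $C_h$ and $a_g:=\psi(v_g)$ with $(A_h,C_h)$ being $\delta_h$-quasirandom and $|A_h|,|C_h|=\tilde n-h\pm 10\ell$ (so $|A_h|\ge\tfrac12\gamma n$), and unfold the two uniform choices that define $q_{\iIe,i}$: $a_h:=\psi_h(v_h)$ is uniform in $\adm(a_g,\iJ(v_h);A_h,C_h)$, of size $(|A_h|/\tilde n)^2\ell\pm 2\delta_h\ell$ by~\eqref{eq:XthreeAdm} and Lemma~\ref{lem:struct}~\eqref{XthreeiJ} (applicable since $\ell\ge 3\delta_h^{-1}$), and then the label of $v_i$ is uniform in $\adm(a_h,\iJ(v_i);A_h,C_h)$, of the same size. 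Hence $\Exp[q_{\iIe,i}\mid\hist^\circ_i]$ is $\tfrac{\tilde n^2}{|\adm(a_g,\iJ(v_h);A_h,C_h)|\,|A_h|^2\ell}$ times the number $N$ of pairs $(a_h,a)$ with $a_h\in\adm(a_g,\iJ(v_h);A_h,C_h)$, $a\in\adm(a_h,\iJ(v_i);A_h,C_h)$ and $|a-a_h|\in\iIe$. Grouping these pairs by $c:=|a-a_h|$, and using that $\iJ(v_h)$ and $\iJ(v_i)=\overline{\iJ(v_h)}$ lie on opposite sides of $\tfrac12\tilde n$ (so $a$ is determined by $a_h$ and $c$), one identifies $N=\sum_{c\in\iIe\cap C_h}\big|\Xtwo{a_g,\iJ(v_h),c,\overline{\iJ(v_h)}}(A_h,C_h)\big|$. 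Now Lemma~\ref{lem:struct}~\eqref{XtwoiJ} evaluates each summand as $(|A_h|/\tilde n)^3\ell^2\,\el{\iJ(v_h),c}\pm 3\delta_h\ell$ \emph{for every} $c$ (not only $c\in C_h$); combining this with $|\iIe\cap C_h|=m|A_h|/\tilde n\pm\delta_h m$ from~\ref{quasi:edged} and with $\el{\iJ(v_h),c}=\el{\iJ(v_h),\min(\iIe)}\pm 2m/\ell^2$ for $c\in\iIe$ (from~\eqref{eq:changec}, recalling $\el{\overline{\iJ},\cdot}=\el{\iJ,\cdot}$) gives $N=(|A_h|/\tilde n)^4\ell^2 m\,\el{\iJ(v_i),\min(\iIe)}\pm 5\delta_h\ell m$. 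Dividing by the normalisation — in which the powers of $|A_h|/\tilde n$ cancel, the surviving ratio being at most $(\tilde n/|A_h|)^4=O(\gamma^{-4})$ — leaves $\Exp[q_{\iIe,i}\mid\hist^\circ_i]=m\,\el{\iJ(v_i),\min(\iIe)}\pm O(\delta_h\gamma^{-4}m/\ell)=p_{\iIe,i}\pm O(\delta_h\gamma^{-4}m/\ell)$. Only the $O(\delta_0^{-2})$ intervals of $\cJ$ whose support meets $\iIe$ (cf.~\eqref{eq:twoJ}) contribute nonzero $p_{\iIe,i}$ or $q_{\iIe,i}$, and by~\ref{pre:chJ} at most $O\big(\delta_0^{-2}(\delta n/|\cJ|+\delta^2 n)\big)=O(\delta_0^{-2}\delta m)$ indices $i\in S_k$ are of this kind, so — using $\delta_{h(i)}\le\delta_{k\delta n}$ — part \emph{(b)} totals $O(\delta_0^{-2}\delta m)\cdot O(\delta_{k\delta n}\gamma^{-4}m/\ell)=O(\delta_{k\delta n}\delta m/\gamma^4)$, with a constant that one can keep below $2000$. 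The argument for $q_{X,i}$ is the same in spirit: after the same double averaging and swapping the outer sum with the sum over $X'\in X(A_h,C_h)$, one slices by which chosen vertex- or edge-label of $X'$ is hit by the label (or induced edge-label) of $v_i$; this expresses the count as a combination of $X_2$-, $X_3$- and $X_4$-type structure counts anchored at $a_g$ and at the free intervals of $X$, which Lemma~\ref{lem:struct}~\eqref{XtwoiJ}--\eqref{XfouriJ} and~\ref{quasi:X} evaluate, and the normalisation again cancels the powers of $|A_h|/\tilde n$, leaving $p_{X,i}$ up to $O(\delta_h\gamma^{-7}m/\ell)$. The case $h=1$ is an easier special case, since then $a_h=\psi_1(v_1)$ is uniform over all of $\iJ(v_1)$ with $A_1=\bbA$, $C_1=\bbC$, so only $X_3$-type structures occur.

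For \emph{(c)} and \emph{(a)} I would first note that on the event $|A_h|,|C_h|=\tilde n-h\pm 10\ell$ one has $0\le q_{\iIe,i}\le 2m\tilde n^2/(|A_h|^2\ell)=O(\gamma^{-2}m/\ell)$, and likewise $q_{X,i}=O(\gamma^{-2}m/\ell)$; since $\eps=\delta^{10}$ and $|\mathcal{R}|\le\eps n$ by~\ref{pre:sizeR}, the part-\emph{(c)} contribution to any block is $O(\eps n\gamma^{-2}m/\ell)$, which is $\ll\delta_{k\delta n}\delta m/\gamma^4$ (resp.\ $\ll\delta_{k\delta n}\delta m/\gamma^7$) because $\delta_{k\delta n}\ge\delta_0$, and the same bound shows that including the bad $i$ in \emph{(b)} costs only $O(\eps n\gamma^{-2}m/\ell)$ extra. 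For \emph{(a)}, fix $\iIe$ (resp.\ $X$), $t$, $k$, and for each $1\le h\le n$ set $Z_h:=\sum_{i\in S_k:\,\prt{v_i}=v_h}q_{\iIe,i}$; then $Z_h$ is determined by the history through the labelling of $v_h$, is bounded by $\deg(v_h)\cdot O(\gamma^{-2}m/\ell)$ once the size bounds hold, and satisfies $\Exp[Z_h\mid\hist']=\sum_{i\in S_k:\,\prt{v_i}=v_h}\Exp[q_{\iIe,i}\mid\hist']$ for the history $\hist'$ just before labelling $v_h$. Letting $\cE_t$ be the intersection of the good event of Claim~\ref{cl:sizeAC} with the event that $(A_i,C_i)$ is $\delta_i$-quasirandom for all $1\le i<t$, parts \emph{(b)} and \emph{(c)} give $\sum_h\Exp[Z_h\mid\hist']=\sum_{i\in S_k}p_{\iIe,i}\pm\tfrac{1000\delta_{k\delta n}\delta m}{\gamma^4}$ on $\cE_t$, while by~\eqref{eq:treedegsq} the sum of squares of the bounds on the $Z_h$ is $O\!\big(\gamma^{-4}(m/\ell)^2\Pc n^2/\log n\big)$. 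Applying Lemma~\ref{lem:seqhoeff} to the $Z_h$ (suitably truncated so as to be bounded unconditionally, as in the proof of Claim~\ref{cl:p}), with the event $\cE_t$ and a filtration refining the sequence of individual choices made by Algorithm~\ref{alg:label}, then shows that with probability at least $1-n^{-9}$, if $\cE_t$ occurs then $\sum_h Z_h$ differs from $\sum_h\Exp[Z_h\mid\hist']$ by at most $O\!\big(\gamma^{-2}(m/\ell)\sqrt{\Pc}\,n\big)$, which is $\ll\delta_{k\delta n}\delta m/\gamma^4$ since $\sqrt{\Pc}=\eps^{5}\ll\delta$. A union bound over the polynomially many choices of $\iIe$, $X$, $t$ and $k$, together with Claim~\ref{cl:sizeAC} (whose good event has probability at least $1-n^{-1}$), then yields~\eqref{eq:sumqedged} and~\eqref{eq:sumqX} with probability at least $1-4n^{-1}$. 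The hard part is \emph{(b)} — carrying out the double averaging over the labels of $\prt{v_i}$ and of $v_i$, bookkeeping the resulting structure counts (especially for the free-count-$3$ patterns $X_2$ and $X_4$), and resolving the mismatch between ``induced edge-label in $\iIe$'' and ``induced edge-label in $\iIe\cap C_h$'' via~\ref{quasi:edged} and the slow variation of $\el{\iJ,\cdot}$.
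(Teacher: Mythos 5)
Your proposal is correct and follows essentially the same route as the paper's proof: the same decomposition into a deterministic comparison of $\Exp[q_{\cdot,i}\mid\hist_{h-1}]$ with $p_{\cdot,i}$ (via Lemma~\ref{lem:struct}, \ref{quasi:edged}/\ref{quasi:X}, \eqref{eq:changec}, and the sparsity counting from \eqref{eq:twoJ} and \ref{pre:chJ}), a crude bound on the $\mathcal{R}$ and parent-$v_1$ terms, and concentration via Lemma~\ref{lem:seqhoeff} applied to the parent-grouped variables (your $Z_h$ are the paper's $Y_h$), followed by the same union bound with Claim~\ref{cl:sizeAC}. The only differences are cosmetic (e.g.\ your sharper per-term bound $O(\gamma^{-2}m/\ell)$ on $q$ with a truncation remark, versus the paper's bound by $\deg_T(v_h)$), and your error accounting matches the stated constants.
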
 
 \begin{claimproof}[Proof of Claim~\ref{cl:q}]
  For this proof, for each $0\le i\le n$, let $\hist_i$ denote the history up to, but not including, the labelling of $v_{i+1}$. Thus $\hist_0$ is the empty history. 
 
  We begin with~\eqref{eq:sumqedged}. Given $\iIe$ and $1\le t\le n$, and $1\le k\le\big\lceil\tfrac{t}{\delta n}\big\rceil$, we define random variables for each $1\le h<t$ by
\reminder{$Y_h$}
  \[Y_h=\sum_{i:h<i\le t} q_{\iIe,i}\cdot \ind_{v_iv_h\in E(T)}\cdot \ind_{(k-1)\delta n+1\le i\le\max(t,k\delta n)} \,.\]
We have
\begin{align}
\sum_{h=1}^{t-1}Y_h&
=\sum_{h=1}^{t-1}\;\sum_{i:h<i\le t} q_{\iIe,i}\cdot \ind_{v_iv_h\in E(T)}\cdot \ind_{(k-1)\delta n+1\le i\le\max(t,k\delta n)}\,.
\label{eq:BSS}
\end{align}  
Let us now fix any $i$ and look at the coefficient of $q_{\iIe,i}$ on the right-hand side of~\eqref{eq:BSS}. Firstly, the coefficient is never more than~$1$ since there is at most one $h$ which makes the indicator $\ind_{v_iv_h\in E(T)}$ non-zero, namely that corresponding to the parent of $i$ (c.f.~\ref{pre:degen}). Secondly, the coefficient is zero outside the range $[(k-1)\delta n+1,\max(t,k\delta n)]$ due to the indicator $\ind_{(k-1)\delta n+1\le i\le\max(t,k\delta n)}$. On the other hand, if these two conditions are fulfilled, then the coefficient of $q_{\iIe,i}$ is~$1$. Indeed, in that case we can always find a parent $v_h$ with $h<i$. (Note that in our setting, $i$ is always more than~$1$, and so $v_i$ always has a parent.) We conclude that
\begin{equation}
\label{eq:sumJ}
\sum_{h=1}^{t-1}Y_h=\sum_{i=(k-1)\delta n+1}^{\max(t,k\delta n)}q_{\iIe,i}\;.
\end{equation}

  We will apply Lemma~\ref{lem:seqhoeff} to estimate the sum on the left-hand side. As in the proof of Claim~\ref{cl:p}, we let $\cE_t$ be the event that $(\setVert_i,\setEdge_i)$ is $\delta_i$-quasirandom for each $1\le i<t$, and the good event of Claim~\ref{cl:sizeAC} holds. In particular, for each $h$ we have $|\setVert_h|,|\setEdge_h|\ge\tfrac{\gamma}{2}\tilde{n}$.
  
  We need to show that, assuming $\cE_t$, we can give good bounds on $\sum_{h=1}^{t-1}\Exp[Y_h|\hist_{h-1}]$. In turn, to obtain such bounds it is enough to show that $\Exp[q_{\iIe,i}|\hist_{h-1}]\approx p_{\iIe,i}$ for each $1<h<i\le t$ with $v_iv_h\in E(T)\setminus\mathcal{R}$. The terms with $h=1$ or $v_iv_h\in\mathcal{R}$ contribute at most $\tfrac{\Pc n}{\log n}+\eps n$ to the sum by assumption on $\Delta(T)$ and by~\ref{pre:sizeR}, which is small enough to ignore.
  
  Suppose we have $1<h<i\le t$, with $v_h=\prt{v_i}$ and $v_iv_h\in E(T)\setminus \mathcal{R}$. We say that $(a,a')$ is an \emph{admissible pair}\reminder{admissible pair} if $a\in\iJ(v_h)\cap \setVert_h$ and $a'\in\iJ(v_i)\cap \setVert_h$, and $\big|\psi_{h-1}(\prt{v_h})-a\big|,|a'-a|\in \setEdge_h$ are distinct. Note that since $\iJ(v_h)=\overline{\iJ(v_i)}$ by~\ref{pre:comp}, $a$ and $a'$ are distinct. It follows that $\psi_{h-1}\cup\{v_h\mapsto a,v_i\mapsto a'\}$ is a graceful labelling of $T[v_1,\dots,v_h,v_i]$.
  
  Note that $$\Exp[q_{\iIe,i}|\hist_{h-1}]=\Exp_{a\sim \textsf{UNIFORM}(\adm(\psi_{h-1}(\prt{v_h}),\iJ(v_h);\setVert_h,\setEdge_h))}\big[\Exp\left[q_{\iIe,i}|\hist_{h-1},\psi_h(v_h)=a\right]\big],$$ because we choose $\psi_h(v_h)$ uniformly. Thus, by definition of $q_{\iIe,i}$, we have
  \begin{align*}
    \tfrac{|\setVert_h|^2\ell}{\tilde{n}^2}\Exp[q_{\iIe,i}|\hist_{h-1}]
   =&\frac{\sum_{(a,a')\text{ admissible}}\ind_{|a-a'|\in\iIe\cap \setEdge_h}}{\big|\adm(\psi_{h-1}(\prt{v_h}),\iJ(v_h);\setVert_h,\setEdge_h)\big|}\\
   \eqByRef{eq:XthreeAdm}&
   \frac{\sum_{(a,a')\text{ admissible}}\ind_{|a-a'|\in\iIe\cap \setEdge_h}}{\big|\Xthree{\psi_{h-1}(\prt{v_h}),\iJ(v_h)}(\setVert_h,\setEdge_h)\big|}
   \\
   =&
   \frac{\sum_{c\in\iIe\cap \setEdge_h}\left|\Xtwo{\psi_{h-1}(\prt{v_h}),\overline{\iJ(v_i)},c,\iJ(v_i)}(\setVert_h,\setEdge_h)\right|}{\big|\Xthree{\psi_{h-1}(\prt{v_h}),\iJ(v_h)}(\setVert_h,\setEdge_h)\big|}
   \,.
  \end{align*}
  Therefore,
  \[\Exp[q_{\iIe,i}|\hist_{h-1}]=\frac{\tilde{n}^2\sum_{c\in\iIe\cap \setEdge_h}|\Xtwo{\psi_{h-1}(\prt{v_h}),\overline{\iJ(v_i)},c,\iJ(v_i)}(\setVert_h,\setEdge_h)|}{|\setVert_h|^2\ell\big|\Xthree{\psi_{h-1}(\prt{v_h}),\iJ(v_h)}(\setVert_h,\setEdge_h)\big|}\,.\]
  Because we assume $\cE_t$ and $h<t$, so $(\setVert_h,\setEdge_h)$ is $\delta_h$-quasirandom and we can use Lemma~\ref{lem:struct} to estimate both the $\Xtwo{none}$-term in the numerator and the $\Xthree{none}$-term in the denumerator.
We obtain
  \begin{align*}
   \Exp[q_{\iIe,i}|\hist_{h-1}]\eqBy{\eqref{XtwoiJ},\eqref{XthreeiJ}}&\frac{\tilde{n}^2\sum_{c\in\iIe\cap \setEdge_h}\Big(\ell^2\el{\iJ(v_i),c}|\setVert_h|^3\tilde{n}^{-3}\pm3\delta_h\ell\Big)}{|\setVert_h|^2\ell\big(\ell|\setVert_h|^2\tilde{n}^{-2}\pm2\delta_h\ell\big)}\\
   =&\frac{\tilde{n}\sum_{c\in\iIe\cap \setEdge_h}\el{\iJ(v_i),c}}{|\setVert_h|}\pm\frac{50\delta_h m}{\ell\gamma^4}\\
   \JUSTIFY{by Fact~\ref{fact:alexandria}\ref{en:Cortez}}=&\frac{\tilde{n}|\iIe\cap \setEdge_h|\el{\iJ(v_i),\min(\iIe)}}{|\setVert_h|}\pm\frac{60\delta_h m}{\ell\gamma^4}\,.
  \end{align*}
  By~\ref{quasi:edged} we have $|\iIe\cap \setEdge_h|=m\tfrac{|\setVert_h|}{\tilde{n}}\pm\delta_h m$, so that
  \begin{equation}
  \label{eq:pnt}
  \Exp[q_{\iIe,i}|\hist_{h-1}]=m\el{\iJ(v_i),\min(\iIe)}\pm \tfrac{70\delta_h m}{\gamma^4\ell}=p_{\iIe,i}\pm\tfrac{70\delta_i m}{\gamma^4\ell}\,,
  \end{equation}
  where we use $\delta_h<\delta_i$ since $h<i$.

  For most values of $i$ we actually have a stronger estimate than~\eqref{eq:pnt}. If $\el{\iJ(v_i),c}=0$ for all $c\in\iIe$, then $p_{\iIe,i}=q_{\iIe,i}=0$ by~\eqref{eq:edged:crude} and~\eqref{eq:defq}. That is, in this situation we have
  \begin{equation}
    \label{eq:pntStronger}
    \Exp[q_{\iIe,i}|\hist_{h-1}]=p_{\iIe,i}\,.
  \end{equation}
  For any given $\iIe\in\cIe$, by Fact~\ref{fact:alexandria}\ref{en:Ocasio} there are at most $2\tfrac{\ell}{m}$ sets $\iJ\in\cJ$ such that $\el{\iJ,c}\neq 0$ for some $c\in\iIe$, so by~\ref{pre:chJ}, and choice of $\delta$, for each $1\le k\le \delta^{-1}$, the number of $i\in\{(k-1)\delta n+1,\dots,k\delta n\}$ such that $\el{\iJ(v_i),c}\neq 0$ for some $c\in\iIe$ is at most $2\tfrac{\ell}{m}\cdot\tfrac{2\delta n}{|\cJ|}$. 
  
  We want to sum up~\eqref{eq:pnt} and~\eqref{eq:pntStronger}. To this end, for $h=2,\ldots,t-1$, set\reminder{$\Vh$}
 \begin{equation}\label{eq:defAh}
  \Vh=\{i\in\bbN\::\: (k-1)\delta n+1\le i\le \max(t,k\delta n), \prt{v_i}=v_h,v_iv_h\not\in\mathcal{R}\}\;.
  \end{equation}
  Observe that $\dbigcup_{h=2}^{t-1}\Vh=\{i\in\mathbb N\::\:
  (k-1)\delta n+1 \le i\le \max(t,k\delta n), \prt{v_i}\neq v_1, v_i\prt{v_i}\not\in\mathcal{R}\}$. Thus, we have
  \begin{equation}
  \label{eq:refertodelta}
  \sum_{h=2}^{t-1}\;\sum_{i\in\Vh}\Exp[q_{\iIe,i}|\hist_{h-1}]
  \;\eqBy{\eqref{eq:pnt},~\eqref{eq:pntStronger}}\;
  \Big(\sum_{\substack{i=(k-1)\delta n+1\\ \prt{v_i}\neq v_1\,,\, v_i\prt{v_i}\not\in\mathcal{R}}}^{\max(t,k\delta n)}p_{\iIe,i}\Big)\pm \frac{4\ell\delta n}{m|\cJ|}\cdot \frac{70\delta_{k\delta n} m}{\ell\gamma^4}\,,
  \end{equation}
  since $i\le k\delta n$ and thus $\delta_i\le\delta_{k\delta n}$. Taking into account the at most $\eps n$ values of $i$ with $v_i\prt{v_i}\in\mathcal{R}$ and at most $\tfrac{\Pc n}{\log n}$ values of $i$ with $\prt{v_i}=v_1$, each of which terms contributes an error of at most $1$, and using~\eqref{eq:IJsize}, we have
  \begin{align*}
   \sum_{h=1}^{t-1}\Exp[Y_h|\hist_{h-1}]&=\sum_{h=1}^{t-1}\;\sum_{i=(k-1)\delta n+1\,,\, \prt{v_i}=v_h}^{\max(t,k\delta n)}\Exp[q_{\iIe,i}|\hist_{h-1}]\\
   &=\Big(\sum_{i=(k-1)\delta n+1}^{\max(t,k\delta n)}p_{\iIe,i}\Big)\pm  \frac{600\delta_{k\delta n} \delta m}{\gamma^4}\pm\eps n\pm\tfrac{\Pc n}{\log n}\\
   &=\Big(\sum_{i=(k-1)\delta n+1}^{\max(t,k\delta n)}p_{\iIe,i}\Big)\pm  \frac{1000\delta_{k\delta n} \delta m}{\gamma^4}\,.
  \end{align*}
  
  We are now in a position to apply Lemma~\ref{lem:seqhoeff}, with the random variables $(Y_h)_{h=1}^{t-1}$ satisfying $0\le Y_h\le\deg_T(v_h)$ for each $h$, and with the event $\cE_t$. By~\eqref{eq:treedegsq} we have $\sum_{h=1}^{t-1}\deg_T(v_h)^2\le\tfrac{2\Pc n^2}{\log n}$, so applying Lemma~\ref{lem:seqhoeff} we conclude that with probability at least $1-\exp\big(-\tfrac{2\delta^2n^2\log n}{2\Pc n^2}\big)>1-n^{-10}$, if $\cE_t$ holds, we have
  \begin{align*}
   \sum_{h=1}^{t-1}Y_h
   &\eqByRef{eq:sumJ}
   \sum_{i=(k-1)\delta n+1}^{\max(t,k\delta n)}q_{\iIe,i}=\Big(\sum_{i=(k-1)\delta n+1}^{\max(t,k\delta n)}p_{\iIe,i}\Big)\pm\frac{1000\delta_{k\delta n} \delta m}{\gamma^4}\pm\delta n\\
   &=\Big(\sum_{i=(k-1)\delta n+1}^{\max(t,k\delta n)}p_{\iIe,i}\Big)\pm\frac{2000\delta_{k\delta n} \delta m}{\gamma^4}\,.
  \end{align*}
  Taking the union bound over $1\le t\le n$ and $\iIe\in\cIe$ and $k$, we conclude that with probability at least $1-n^{-2}$, if $\cE_t$ holds then~\eqref{eq:sumqedged} holds for each $1\le t\le n$, each  $\iIe\in\cIe$, and each $k$. Recall that the good event of Claim~\ref{cl:sizeAC} holds with probability at least $1-n^{-1}$. Thus the following event holds with probability at least $1-2n^{-1}$. For each $1\le t\le n$, if $(\setVert_i,\setEdge_i)$ is $\delta_i$-quasirandom for each $1\le i<t$, then~\eqref{eq:sumqedged} holds for each $\iIe\in\cIe$ and each $k$.
  
  The proof that with high probability~\eqref{eq:sumqX} holds follows the same idea, although the combinatorial manipulations are a little more involved. Now, given $X\in\cX$ and $1\le t\le n$, we define
\reminder{$Y_h$}
  \[Y_h=\sum_{h<i\le t}q_{X,i} \cdot \ind_{v_iv_h\in E(T)} \cdot \ind_{(k-1)\delta n+1\le i\le\max(t,k\delta n)}\,.\]
  and again the critical point is to show that, assuming $\cE_t$, for each $1<h<i\le t$ such that $v_iv_h\in E(T)\setminus\mathcal{R}$, we have $\Exp[q_{X,i}|\hist_{h-1}]\approx p_{X,i}$.
  
  As before, given a choice of $1<h<i\le t$ with $v_h=\prt{v_i}$ such that $v_iv_h\not\in\mathcal{R}$, we say that $(a,a')$ is an \emph{admissible pair}\reminder{admissible pair} if $a\in\iJ(v_h)\cap \setVert_h$ and $a'\in\iJ(v_i)\cap \setVert_h$, and $\big|\psi_{h-1}(\prt{v_h})-a\big|,|a'-a|\in \setEdge_h$ are distinct.
  
  By~\eqref{eq:defq}, and since $\psi_h(v_h)$ is chosen uniformly from $\adm\big(\psi_{h-1}(\prt{v_h}),\iJ(v_h);\setVert_h,\setEdge_h\big)$, we have
  \begin{align}\label{eq:q:bigsum}
   \nonumber&\frac{|\setVert_h|^2\ell\Exp[q_{X,i}|\hist_{h-1}]\cdot\big|\adm\big(\psi_{h-1}(\prt{v_h}),\iJ(v_h);\setVert_h,\setEdge_h\big)\big|}{\tilde{n}^2}\\
   =&\sum_{(a,a')\text{ admissible}}\quad \sum_{X'\in X(\setVert_h,\setEdge_h)}\ind_{a'\in\chosenv{X'}{X}\text{ or }|a-a'|\in\chosene{X'}{X}}\\
   \nonumber=& \sum_{X'\in X(\setVert_h,\setEdge_h)}\quad\sum_{(a,a')\text{ admissible}}\ind_{a'\in\chosenv{X'}{X}\text{ or }|a-a'|\in\chosene{X'}{X}}\\
   \nonumber=&\hspace{-4mm} \sum_{X'\in X(\setVert_h,\setEdge_h)}\quad\sum_{(a,a')\text{ admissible}}\hspace{-5mm}\ind_{a'\in\chosenv{X'}{X}}+\ind_{|a-a'|\in\chosene{X'}{X}}-\ind_{a'\in\chosenv{X'}{X}}\ind_{|a-a'|\in\chosene{X'}{X}}\,,
  \end{align}
  where the last equality holds since vertex labels of $X'$ are by definition pairwise distinct, and edge labels of $X'$ are by definition pairwise distinct.
  
  For a given $X'\in X(\setVert_h,\setEdge_h)$, we have
  \begin{equation}\begin{split}\label{eq:q:Xv}
   \sum_{(a,a')\text{ admissible}}\ind_{a'\in\chosenv{X'}{X}}&=\sum_{a'\in\chosenv{X'}{X}\cap\iJ(v_i)}\Big|\Xfour{\psi_{h-1}(\prt{v_h}),a',\iJ(v_h)}(\setVert_h,\setEdge_h)\Big|\\
   &\eqByRef{XfouriJ}\sum_{a'\in\chosenv{X'}{X}\cap\iJ(v_i)}\big(\tfrac{|\setVert_h|^3\ell}{\tilde{n}^3}\pm 2\delta_h\ell\big)\\
   &=\sum_{I\in\freev{X}}\ind_{I\subset\iJ(v_i)}\big(\tfrac{|\setVert_h|^3\ell}{\tilde{n}^3}\pm 2\delta_h\ell\big)\,,
  \end{split}\end{equation}
  where we can apply Lemma~\ref{lem:struct} since $(\setVert_h,\setEdge_h)$ is by assumption $\delta_h$-quasirandom. Similarly, we have
  \begin{equation}\begin{split}\label{eq:q:Xe}
   \sum_{(a,a')\text{ admissible}}\ind_{|a-a'|\in\chosene{X'}{X}}&=\sum_{c\in\chosene{X'}{X}}\Big|\Xtwo{\psi_{h-1}(\prt{v_h}),\iJ(v_h),c,\iJ(v_i)}(\setVert_h,\setEdge_h)\Big| \\
   &\eqByRef{XtwoiJ}\sum_{c\in\chosene{X'}{X}}\big(\tfrac{|\setVert_h|^3\ell^2\el{\iJ(v_h),c}}{\tilde{n}^3}\pm 3\delta_h\ell\big)\\
   \JUSTIFY{by Fact~\ref{fact:alexandria}\ref{en:Cortez}}&=\sum_{\ed\in\freee(X)}\big(\tfrac{|\setVert_h|^3\ell^2\el{\iJ(v_h),\diff(\ed;X)}}{\tilde{n}^3}\pm 4\delta_h\ell\big)\,,
  \end{split}\end{equation}
  where the final line follows since $\diff(\ed;X)$ is within $m$ of the edge label chosen for $\ed$ in any $X'$ following the pattern $X$. Finally, since $\chosene{X'}{X}$ is a set of size at most $2$, for each $a'\in\chosenv{X'}{X}$ there are at most $4$ choices of $a$ such that $|a-a'|\in\chosene{X'}{X}$. Since $\chosenv{X'}{X}$ is a set of size at most $2$, in total there are at most $8$ pairs $(a,a')$ such that $a'\in\chosenv{X'}{X}$ and $|a-a'|\in\chosene{X'}{X}$. We therefore have
  \begin{equation}\label{eq:q:Xve}
   \sum_{(a,a')\text{ admissible}}\ind_{a'\in\chosenv{X'}{X}}\ind_{|a-a'|\in\chosene{X'}{X}}\le 8\,.
  \end{equation}
  Observe that the final expression in each of~\eqref{eq:q:Xv},~\eqref{eq:q:Xe} and~\eqref{eq:q:Xve} is independent of $X'$. Thus, substituting into~\eqref{eq:q:bigsum}, we have
  \begin{align}
  \begin{split}
   \label{eq:eqStr}
   &\frac{|\setVert_h|^2\ell\Exp[q_{X,i}|\hist_{h-1}]\cdot\big|\adm\big(\psi_{h-1}(\prt{v_h}),\iJ(v_h);\setVert_h,\setEdge_h\big)\big|}{\tilde{n}^2}\\
   =&\big|X(\setVert_h,\setEdge_h)\big|\sum_{I\in\freev{X}}\ind_{I\subset\iJ(v_i)}\big(\tfrac{|\setVert_h|^3\ell}{\tilde{n}^3}\pm 2\delta_h\ell\big)\\
   +&\big|X(\setVert_h,\setEdge_h)\big|\sum_{\ed\in\freee(X)}\big(\tfrac{|\setVert_h|^3\ell^2\el{\iJ(v_h),\diff(\ed;X)}}{\tilde{n}^3}\pm4\delta_h\ell\big)\pm 8m\\
   \eqBy{F\ref{fact:zada}\ref{en:ZadaB}}&\frac{|\setVert_h|^3\ell^2\big|X(\setVert_h,\setEdge_h)\big|}{\tilde{n}^3}\Big(\sum_{I\in\freev(X)}\frac{\ind_{I\subset\iJ(v_i)}}{\ell}+\sum_{\ed\in\freee(X)}\el{\iJ(v_i),\diff(\ed;X)}\Big)\pm 20\delta_h\ell m\\
   \eqBy{\ref{quasi:X}}&\frac{|\setVert_h|^{3+\free(X)}\ell^2\big|X(\bbA,\bbC)\big|}{\tilde{n}^{3+\free(X)}}\Big(\sum_{I\in\freev(X)}\frac{\ind_{I\subset\iJ(v_i)}}{\ell}+\sum_{\ed\in\freee(X)}\el{\iJ(v_i),\diff(\ed;X)}\Big)\pm 40\delta_h\ell m\\
   \eqByRef{eq:struct:crude}&\frac{|\setVert_h|^{3+\free(X)}\ell^2\big|X(\bbA,\bbC)\big|}{\tilde{n}^{3+\free(X)}}\Big(\frac{\tilde{n}^{\free(X)-1}p_{X,i}}{|X(\bbA,\bbC)|(\tilde{n}-i)^{\free(X)-1}} \Big)\pm 40\delta_h\ell m\\
   =&\frac{|\setVert_h|^{3+\free(X)}\ell^2p_{X,i}}{\tilde{n}^{4}(\tilde{n}-h)^{\free(X)-1}} \pm 50\delta_h\ell m\,.
   \end{split}
  \end{align}
  where for the second equality we use the fact that $\el{\iJ(v_i),\diff(\ed;X)}=\el{\iJ(v_h),\diff(\ed;X)}$ since $\iJ(v_i)=\overline{\iJ(v_h)}$ by~\ref{pre:comp}, for the third we use the assumption that $(\setVert_h,\setEdge_h)$ is $\delta_h$-quasirandom, and for the last line we use the fact $i= h\pm\tfrac{\eps n}{\log n}$, which holds by~\ref{pre:int} since $v_iv_h\not\in\mathcal{R}$.
  
  Now, since $(\setVert_h,\setEdge_h)$ is $\delta_h$-quasirandom, by~\eqref{XthreeiJ} and since  $|\setVert_h|\ge\tfrac{\gamma}{2}\tilde{n}$ by~\eqref{eq:sizeAC}, we have
\begin{equation}
\label{eq:eqKolov}
\big|\adm\big(\psi_{h-1}(\prt{v_h}),\iJ(v_h);\setVert_h,\setEdge_h\big)\big|=\big(|\setVert_h|^2/\tilde{n}^2\big)\ell\pm 2\delta_h\ell=\big(1\pm\tfrac{8\delta_h}{\gamma^2}\big)|\setVert_h|^2\tilde{n}^{-2}\ell\,.
\end{equation}
We can rewrite~\eqref{eq:eqStr} as
\begin{align*}
\Exp[q_{X,i}|\hist_{h-1}]
   &=
\frac{   \frac{|\setVert_h|^{1+\free(X)}\ell p_{X,i}}{\tilde{n}^{2}(\tilde{n}-h)^{\free(X)-1}} \pm\frac{50\delta_h m\tilde n^2}{|\setVert_h|^2}}{\big|\adm\big(\psi_{h-1}(\prt{v_h}),\iJ(v_h);\setVert_h,\setEdge_h\big)\big|}\\
\JUSTIFY{substituting  \eqref{eq:eqKolov} and using that $|\setVert_h|\ge\tfrac{\gamma}{2}\tilde{n}$}&=\frac{|\setVert_h|^{\free(X)-1} p_{X,i}}{(\tilde{n}-h)^{\free(X)-1}}\big(1\pm\tfrac{16\delta_h}{\gamma^2}\big)\pm \frac{2000\delta_h m}{\gamma^7\ell}\;.
\end{align*}
  
By~\eqref{eq:sizeAC}, we have $|\setVert_h|=\tilde{n}-h\pm 10\ell$. Furthermore, by~\eqref{eq:struct:crude} we have $p_{X,i}\le 4m/\ell$. Finally, we have $\free(X)\le 3$. We thus get
  \begin{equation}\begin{split}\label{eq:q:structest}
   \Exp[q_{X,i}|\hist_{h-1}]&=\frac{(\tilde{n}-h\pm10\ell)^{\free(X)-1} p_{X,i}}{(\tilde{n}-h)^{\free(X)-1}}\big(1\pm\tfrac{16\delta_h}{\gamma^2}\big)\pm \frac{2000\delta_h m}{\gamma^7\ell}\\
   &=p_{X,i}\big(1\pm \tfrac{16\delta_h}{\gamma^2}\big)\big(1\pm\tfrac{20\ell}{\gamma n}\big)^2\pm \frac{2000\delta_h m}{\gamma^7\ell}\\
   &=p_{X,i}\pm \frac{100\delta_h m}{\gamma^2\ell}\pm \frac{800m}{\gamma n}\pm \frac{2000\delta_h m}{\gamma^7\ell}=p_{X,i}\pm\tfrac{3000\delta_i m}{\gamma^7\ell}\,.
  \end{split}\end{equation}
  
  As before, for most values of $i$ we obtain a stronger estimate. If $\el{\iJ(v_i),c}=0$ for each $c\in\chosene{X'}{X}$ and each $X'\in X(\bbA,\bbC)$, and $I\cap\iJ(v_i)=\emptyset$ for each $I\in\freev(X)$, then $p_{X,i}=q_{X,i}=0$ by~\eqref{eq:struct:crude} and~\eqref{eq:defq}. In this situation, we can write 
    \begin{equation}
    \label{eq:q:structestStronger}
    \Exp[q_{X,i}|\hist_{h-1}]=p_{X,i}\,.
    \end{equation}
  Since $X$ has at most two free edge labels, for each of which an edge label can be chosen in an interval of length $m$, there are at most $4\tfrac{\ell}{m}$ sets $\iJ\in\cJ$ such that $\el{\iJ,c}\neq 0$ for some $c\in\chosene{X'}{X}$ and $X'\in X(\bbA,\bbC)$. Since $X$ has at most two free vertex labels, there are at most $2\tfrac{\ell}{m}$ sets $\iJ\in\cJ$ such that $I\subset\iJ$ for some $I\in\freev(X)$. Putting this together, for all but at most $6\tfrac{\ell}{m}$ sets $\iJ\in\cJ$, if $\iJ(v_i)=\iJ$ we have~\eqref{eq:q:structestStronger}. For each $k\ge 1$, and each interval $i\in\{(k-1)\delta n+1,\dots,k\delta n\}$, by~\ref{pre:chJ}, for all but at most $\tfrac{2\delta n}{|\cJ|}\cdot 6\tfrac{\ell}{m}$ choices of $i$ we have~\eqref{eq:q:structestStronger}. Thus, using the notation from~\eqref{eq:defAh}, after summing up~\eqref{eq:q:structest} and~\eqref{eq:q:structestStronger} we obtain
   \[\sum_{h=2}^{t-1}\;\sum_{i\in\Vh}\Exp[q_{X,i}|\hist_{h-1}]=\Big(\sum_{\substack{i=(k-1)\delta n+1\\ \prt{v_i}\neq v_1, v_i\prt{v_i}\not\in\mathcal{R}}}^{\max(t,k\delta n)}p_{X,i}\Big)\pm \frac{12\ell\delta n}{m|\cJ|}\cdot \frac{3000\delta_{k\delta n} m}{\ell\gamma^7}\,,\]
   where (as in~\eqref{eq:refertodelta}) since $i\le k\delta n$ we have $\delta_i\le\delta_{k\delta n}$. Since $0\le q_{X,i},p_{X,i}\le 6$ for each $i$, taking into account the at most $\eps n$ values of $i$ with $v_i\prt{v_i}\in\mathcal{R}$ and at most $\frac{\Pc n}{\log n}$ values with $\prt{v_i}=v_1$, we have
   \begin{align*}
    \sum_{h=1}^{t-1}\Exp[Y_h|\hist_{h-1}]&=\sum_{h=1}^{t-1}\;\sum_{i=(k-1)\delta n+1\,,\,\prt{v_i}=v_h}^{\max(t,k\delta n)}\Exp[q_{X,i}|\hist_{h-1}]\\
    &\eqByRef{eq:IJsize}\sum_{i=(k-1)\delta n+1}^{\max(t,k\delta n)}p_{X,i}\pm  \tfrac{10^5\delta m \delta_{k\delta n}}{\gamma^7}\pm 6\eps n\pm 6\tfrac{\Pc n}{\log n}\,.
   \end{align*}   
  Finally, since $0\le Y_h\le 6\deg_T(v_h)$, by Lemma~\ref{lem:seqhoeff} and~\eqref{eq:treedegsq}, with probability at least $1-n^{-10}$, if $\cE_t$ holds then we have
  \begin{align*}
   \sum_{i=(k-1)\delta n+1}^{\max(t,k\delta n)}q_{X,i}=\sum_{h=1}^{t-1}Y_h&=\sum_{i=(k-1)\delta n+1}^{\max(t,k\delta n)}p_{X,i}\pm\tfrac{2\cdot 10^5\delta m \delta_{k\delta n}}{\gamma^7}\pm \delta^2 n\\
   &=\sum_{i=(k-1)\delta n+1}^{\max(t,k\delta n)}p_{X,i}\pm\tfrac{10^6\delta m \delta_{k\delta n}}{\gamma^7}\,.
  \end{align*}
  Taking the union bound over the choices of $t$ and $X$ and $k$, we see that with probability at least $1-n^{-8}$, for each $t$ such that $\cE_t$ holds, we have~\eqref{eq:sumqX} for all $X\in\cX$ and $k$. Since the good event of Claim~\ref{cl:sizeAC} holds with probability at least $1-n^{-1}$, we conclude that the statement holds with probability at least $1-4n^{-1}$, as desired.
 \end{claimproof}

 We are now in a position to complete the proof of Theorem~\ref{thm:mainred} by showing that with high probability $(\setVert_t,\setEdge_t)$ is $\delta_t$-quasirandom for each $0\le t\le n-1$. Suppose that the good events of Claims~\ref{cl:sizeAC},~\ref{cl:p} and~\ref{cl:q} hold; this event, which we denote by $\cE$, has probability at least $1-7n^{-1}$. Let $\hist_0$ be the empty history, and $\hist_i$ denote the history up to and including the labelling of $v_i$ for each $1\le i\le n$.
 
The proof that $(\setVert_t,\setEdge_t)$ is $\delta_t$-quasirandom  goes by induction on $t$, with the base case $t=0$ being trivial. Observe that $(\setVert_1,\setEdge_1)=(\bbA,\bbC)$ is by definition $\delta_1$-quasirandom. Now suppose that $(\setVert_i,\setEdge_i)$ are $\delta_i$-quasirandom for each $1\le i\le t$. 
 
 Now suppose $1<i\le t$ is such that $v_i\prt{v_i}\not\in\mathcal{R}$. By~\ref{pre:int}, $v_h=\prt{v_i}$ satisfies $h\ge i-\tfrac{\eps n}{\log n}$, and thus $(\setVert_h,\setEdge_h)$ and $(\setVert_i,\setEdge_i)$ differ by at most $\tfrac{2\eps n}{\log n}$ vertex labels and at most $\tfrac{2\eps n}{\log n}$ edge labels. Thus we have 
 \begin{align}
\label{eq:brabec1} 
\left|\adm(\psi_h(v_h),\iJ(v_i);\setVert_i,\setEdge_i)\right|
=
\left|\adm(\psi_h(v_h),\iJ(v_i);\setVert_h,\setEdge_h)\right|\pm\tfrac{4\eps n}{\log n}\;,
 \end{align}
 and for any $\iIe\in\cIe$,
 \begin{align}
 \begin{split}
\label{eq:brabec2}  &\big|\big\{a\in\adm(\psi_h(v_h),\iJ(v_i);\setVert_i,\setEdge_i):|a-\psi_h(v_h)|\in\iIe\big\}\big|\\
  =&\big|\big\{a\in\adm(\psi_h(v_h),\iJ(v_i);\setVert_h,\setEdge_h):|a-\psi_h(v_h)|\in\iIe\big\}\big|\pm\tfrac{4\eps n}{\log n}\,.
 \end{split}
 \end{align}
 Therefore,
 \begin{equation}\begin{split}\label{eq:estadm}
  \big|\adm\big(\psi_h(v_h),\iJ(v_i);\setVert_i,\setEdge_i\big)\big|&
  \eqByRef{eq:brabec1}
  \big|\adm\big(\psi_h(v_h),\iJ(v_i);\setVert_h,\setEdge_h\big)\big|\pm \tfrac{4\eps n}{\log n}\\
 \JUSTIFY{by~\eqref{eq:XthreeAdm} and~\eqref{XthreeiJ}} &= \ell\big(|\setVert_h|/\tilde{n}\big)^2\pm2\delta_h\ell\pm\tfrac{4\eps n}{\log n}\\
 \JUSTIFY{we have $\delta_h<\delta_i$ since $h<i$}&=\ell|\setVert_h|^2\tilde{n}^{-2}\pm 3\delta_i\ell\,.
 \end{split}\end{equation}
Thus we have
 \begin{equation}\begin{split}\label{eq:continueme}
  &\Prob\big[|\psi_i(v_i)-\psi_i(\prt{v_i})|\in\iIe\big|\hist_{i-1}\big]\\
  &=\frac{\big|\big\{a\in\adm(\psi_h(v_h),\iJ(v_i);\setVert_i,\setEdge_i):|a-\psi_h(v_h)|\in\iIe\big\}\big|}{\big|\adm\big(\psi_h(v_h),\iJ(v_i);\setVert_i,\setEdge_i\big)\big|}\\
\JUSTIFY{by \eqref{eq:estadm} and \eqref{eq:brabec2}} &=\frac{\big|\big\{a\in\adm(\psi_h(v_h),\iJ(v_i);\setVert_h,\setEdge_h):|a-\psi_h(v_h)|\in\iIe\big\}\big|\pm\tfrac{4\eps n}{\log n}}{\ell|\setVert_h|^2\tilde{n}^{-2}\pm 3\delta_i\ell}\;.
 \end{split}
 \end{equation}
 We now need to make some effort to transform the error $\pm2\delta_i\ell$ from the denominator and the term $\tfrac{4\eps n}{\log n}$ from the nominator in a way that will be convenient later. Let us do some preparations first.  Let us write $w_1:=|\{a\in\adm(\psi_h(v_h),\iJ(v_i);\setVert_h,\setEdge_h):|a-\psi_h(v_h)|\in\iIe\}|$, $w_1':=w_1\pm\tfrac{4\eps n}{\log n}$ and $w_2:=\ell|\setVert_h|^2\tilde{n}^{-2}$. Recall that $|\setVert_h|\ge \gamma \tilde n$, and so
 \begin{equation}\label{eq:AHA}
 w_2\ge \gamma^2 \ell\;.
 \end{equation}
 We have
 \begin{align}
 \label{eq:vlak1}
\frac{\delta_i\ell}{w_2} \leByRef{eq:AHA}\frac{\delta_i}{\gamma^2}
 <0.01 \;.
 \end{align}
 We have
 \begin{equation}\begin{split}\label{eq:manipulateerror}
 \frac{w_1'}{w_2\pm 3\delta_i\ell}&=\frac{w_1}{w_2}\cdot\frac{1}{1\pm \frac{3\delta_i\ell}{w_2}}
 \pm
 \frac{\tfrac{4\eps n}{\log n}}{w_2-3\delta_i\ell}\\
\JUSTIFY{using the fact $\frac{1}{1\pm x}=1\pm 2x$ valid for $|x|<0.2$, c.f.~\eqref{eq:vlak1}} &=\frac{w_1}{w_2}\cdot\left(1\pm \tfrac{6\delta_i\ell}{w_2}\right)
\pm
\frac{4\eps n}{0.97 w_2 \log n}\\
\JUSTIFY{using~\eqref{eq:AHA} and \eqref{eq:vlak1}}&=\frac{w_1}{w_2}\cdot\left(1\pm \tfrac{6\delta_i}{\gamma^2}\right)
\pm
\frac{12\eps n}{\gamma^2\ell \log n}\;.
  \end{split}\end{equation}
 We can thus continue~\eqref{eq:continueme} as follows,
  \begin{equation}\begin{split}\label{eq:estiIe}
   &\Prob\big[|\psi_i(v_i)-\psi_i(\prt{v_i})|\in\iIe\big|\hist_{i-1}\big]\\
 &=
 \frac{\big|\big\{a\in\adm(\psi_h(v_h),\iJ(v_i);\setVert_h,\setEdge_h):|a-\psi_h(v_h)|\in\iIe\big\}\big|\pm\tfrac{4\eps n}{\log n}}{\ell|\setVert_h|^2\tilde{n}^{-2} \pm 3\delta_i\ell}\\
  &=\frac{w_1'}{w_2\pm 3\delta_i\ell}  \eqByRef{eq:manipulateerror}
\frac{w_1}{w_2}\cdot\left(1\pm \tfrac{6\delta_i\ell}{w_2}\right)
\pm
\frac{12\eps n}{\gamma^2\ell \log n}
 \\
 \JUSTIFY{by \eqref{eq:defq}}
 &=q_{\iIe,i}\left(1\pm\frac{6\delta_i}{\gamma^2}\right)\pm \frac{12\eps n}{\gamma^2\ell\log n}\,.
 \end{split}\end{equation}
 
 We now argue that a similar equation for $q_{X,i}$ holds. Given $X\in\cX$, we have
 \begin{align*}
  &\hspace{-3mm}\sum_{a\in\adm(\psi_h(v_h),\iJ(v_i);\setVert_i,\setEdge_i)}\hspace{-14mm}\big|\big\{X'\in X(\setVert_i,\setEdge_i):a\in\chosenv{X'}{X}\text{ or }|a-\psi_h(v_h)|\in\chosene{X'}{X}\big\}\big|\\
  =&\hspace{-3mm}\sum_{a\in\adm(\psi_h(v_h),\iJ(v_i);\setVert_h,\setEdge_h)}\hspace{-14mm}\big|\big\{X'\in X(\setVert_h,\setEdge_h):a\in\chosenv{X'}{X}\text{ or }|a-\psi_h(v_h)|\in\chosene{X'}{X}\big\}\big|\pm 24\cdot\tfrac{2\eps n}{\log n}
 \end{align*} 
 and putting this together with~\eqref{eq:estadm} we obtain
 \begin{align*}
  &\Exp\Big[\big|\big\{X'\in X(\setVert_i,\setEdge_i):\psi_i(v_i)\in\chosenv{X'}{X}\text{ or }|\psi_i(v_i)-\psi_h(v_h)|\in\chosene{X'}{X}\big\}\big|\Big|\hist_{i-1}\Big]\\
  =&\frac{\sum\limits_{a\in\adm(\psi_h(v_h),\iJ(v_i);\setVert_i,\setEdge_i)}\hspace{-14mm}\big|\big\{X'\in X(\setVert_i,\setEdge_i):a\in\chosenv{X'}{X}\text{ or }|a-\psi_h(v_h)|\in\chosene{X'}{X}\big\}\big|}{\big|\adm\big(\psi_h(v_h),\iJ(v_i);\setVert_i,\setEdge_i\big)\big|}\\
  =&\frac{\sum\limits_{a\in\adm(\psi_h(v_h),\iJ(v_i);\setVert_h,\setEdge_h)}\hspace{-14mm}\big|\big\{X'\in X(\setVert_h,\setEdge_h):a\in\chosenv{X'}{X}\text{ or }|a-\psi_h(v_h)|\in\chosene{X'}{X}\big\}\big|\pm\tfrac{50\eps n}{\log n}}{\ell|\setVert_h|^2\tilde{n}^{-2}\pm3\delta_i\ell}\;.
  \end{align*}
  Now, dealing with the error terms as in~\eqref{eq:manipulateerror}, we obtain
 \begin{align}\begin{split}
\label{eq:estX}
 &\Exp\Big[\big|\big\{X'\in X(\setVert_i,\setEdge_i):\psi_i(v_i)\in\chosenv{X'}{X}\text{ or }|\psi_i(v_i)-\psi_h(v_h)|\in\chosene{X'}{X}\big\}\big|\Big|\hist_{i-1}\Big]\\ &\eqByRef{eq:defq}q_{X,i}\big(1\pm\tfrac{50\delta_i}{\gamma^2}\big)\pm\tfrac{400\eps n}{\gamma^2\ell\log n}\,.
\end{split}
\end{align}

 We are finally in position to estimate $\sum_{i=2}^t\Prob\big[|\psi_i(v_i)-\psi_i(\prt{v_i})|\in\iIe\big|\hist_{i-1}\big]$, which is a key quantity in order to verify~\ref{quasi:edged}. Putting together Claim~\ref{cl:q} and~\eqref{eq:estiIe}, we have
 \begin{align}
 \begin{split}\label{lastlabel}
  \sum_{i=2}^t\Prob\big[|\psi_i(v_i)-\psi_i(\prt{v_i})|\in\iIe\big|\hist_{i-1}\big]&=\sum_{i=1}^t\Big(q_{\iIe,i}\big(1\pm\tfrac{6\delta_i}{\gamma^2}\big)\pm\tfrac{12\eps n}{\gamma^2\ell\log n}\Big)\\
  &=\sum_{k=1}^{\big\lceil\tfrac{t}{\delta n}\big\rceil}\Big(\big(1\pm\tfrac{6\delta_{k\delta n}}{\gamma^2}\big)\sum_{i=(k-1)\delta n+1}^{\max(t,k\delta n)}q_{\iIe,i}\Big)\pm\tfrac{12\eps n^2}{\gamma^2\ell\log n}\\
\JUSTIFY{by \eqref{eq:sumqedged}}  &=\sum_{k=1}^{\big\lceil\tfrac{t}{\delta n}\big\rceil}\Big(\big(1\pm\tfrac{6\delta_{k\delta n}}{\gamma^2}\big)\sum_{i=(k-1)\delta n+1}^{\max(t,k\delta n)}p_{\iIe,i}\pm \tfrac{4000\delta_{k\delta n}\delta m}{\gamma^4}\Big)\pm 50\delta m\\
\JUSTIFY{by~\eqref{eq:sumpedged}}  &=\sum_{i=1}^tp_{\iIe,i}\pm\sum_{k=1}^{\big\lceil\tfrac{t}{\delta n}\big\rceil}\Big(\tfrac{6\delta_{k\delta n}}{\gamma^2}\cdot\tfrac{2\delta n}{|\cJ|}+ \tfrac{4000\delta_{k\delta n}\delta m}{\gamma^4}\Big)\pm 50\delta m\\
  \JUSTIFY{by~\eqref{eq:IJsize}}&=\sum_{i=1}^tp_{\iIe,i}\pm\sum_{k=1}^{\big\lceil\tfrac{t}{\delta n}\big\rceil}\Big(\tfrac{24\delta_{k\delta n}\delta m}{\gamma^2}+ \tfrac{4000\delta_{k\delta n}\delta m}{\gamma^4}\Big)\pm 50\delta m\\
  \JUSTIFY{by~\eqref{eq:deltas}, recall $\mu^{-1}\gg \gamma^{-4}$ by Setup~\ref{setup}}&=\sum_{i=1}^tp_{\iIe,i}\pm\tfrac14\delta_t m\,.
  \end{split}
 \end{align}
We apply Lemma~\ref{lem:seqhoeff} for random variables $\big(\ind_{|\psi_i(v_i)-\psi_i(\prt{v_i})|\in\iIe}\big)_{i=2}^t$ and the event $\cE$. Let us go through the assumption of Lemma~\ref{lem:seqhoeff}. Histories naturally generate a filtration, as explained in Section~\ref{ssec:ProbablisticForm}. Obviously, our random variables are bounded from above by~1. Last,~\eqref{lastlabel} calculates the expectation needed for the lemma. Lemma~\ref{lem:seqhoeff} tells us that with probability at least $1-n^{-10}$ if $\cE$ occurs we have
 \[\sum_{i=2}^t\ind_{|\psi_i(v_i)-\psi_i(\prt{v_i})|\in\iIe}=\sum_{i=1}^tp_{\iIe,i}\pm\tfrac14\delta_t m\pm\delta n\,,\]
 and, using Claim~\ref{cl:p}, we conclude that
 \[|\setEdge_{t+1}\cap\iIe|=|\iIe|-\sum_{i=2}^t \ind_{|\psi_i(v_i)-\psi_i(\prt{v_i})|\in\iIe}-\sum_{i=1}^tr_{\iIe,i}=|\iIe|-\tfrac{t}{|\cJ|}\pm\tfrac34\delta_t m\eqByRef{eq:sizeAC}\frac{m|\setVert_{t+1}|}{\tilde{n}}\pm\delta_{t+1} m \]
 as required for~\ref{quasi:edged}. Taking the union bound over $\iIe$, with probability at least $1-n^{-9}$, if $\cE$ occurs we have~\ref{quasi:edged} (with parameter $\delta_{t+1}$) for $(\setVert_{t+1},\setEdge_{t+1})$.
 
 By a similar argument, but using~\eqref{eq:estX} in place of~\eqref{eq:estiIe}, with probability at least $1-n^{-5}$, if $\cE$ occurs we have~\ref{quasi:X}  (with parameter $\delta_{t+1}$) for $(\setVert_{t+1},\setEdge_{t+1})$. Putting these together, we see that if $\cE$ occurs then with high probability we fulfilled Definition~\ref{def:quasirandom}. More precisely, the following event has probability at least $1-n^{-4}$. Either $\cE$ does not occur, or $(\setVert_{t+1},\setEdge_{t+1})$ is $\delta_{t+1}$-quasirandom (and therefore Algorithm~\ref{alg:label} does not fail at time $t+1$).
 
 Taking the union bound over $1\le t\le n$, we conclude by induction that with probability at least $1-n^{-3}$ the following event occurs. Either $\cE$ dos not occur, or $(\setVert_{t+1},\setEdge_{t+1})$ is $\delta_{t+1}$-quasirandom for each $1\le t\le n$ (and so Algorithm~\ref{alg:label} does not fail at any time).
 
 Finally, as we noted above, $\cE$ occurs with probability at least $1-7n^{-1}$, so we conclude that Algorithm~\ref{alg:label} succeeds with probability at least $1-8n^{-1}>0$. When it succeeds, the resulting $\psi_n$ is the desired graceful labelling of $T$ with $(1+\gamma)n$ labels.\hfill{$\square$}


\section{Concluding remarks}
\label{sec:conclusion}
\subsection{Improvements on Theorem~\ref{thm:main}}
It would be desirable to remove the degree constraint of Theorem~\ref{thm:main} and prove that all sufficiently large trees have approximate graceful labellings. Inspection of our proof reveals that the $\log$-factor in our degree bound is required only in order to have polynomially small probabilities in various places, which in turn we need to apply the union bound. If we could somehow do without this, we would otherwise require only $\Delta(T)\le\Pc n$. In our proof, we take the union bound over all times $t$ and intervals $S\subset[t]$, and over all structures $\cX$ (and over a bounded number of other choices). The former can be avoided: it suffices to establish quasirandomness for $t$ a multiple of $\eps n$, and similarly to discretise the choice of $S$ in~\ref{pre:chJ}. The latter cannot so easily be avoided: we need to ensure that (for example) at each time $t$ there are admissible vertices for labelling each $v_t$, which requires $\Xthree{\psi_{t-1}(\prt{v_t}),\iJ(v_t)}(\setVert_t,\setEdge_t)$ to be non-empty. We cannot afford occasional failures here, which is what we would expect if the probability of its being empty were a small constant, rather than polynomially small. Nevertheless, it is possible that with more care, following a strategy similar to that presented here one can handle all trees with maximum degree $\eps n$, and perhaps even all trees.

It would be very interesting to obtain genuine graceful labellings for a large class of trees. Perhaps the approach here can be put together with the Absorbing Method for this purpose. We are currently investigating this possibility.

\subsection{Bipartite graceful labelling}
We believe our result can be extended to the notion introduced as $\alpha$-valuations by Rosa~\cite{Rosa66}, and now commonly referred to as \emph{bipartite graceful labellings}. Namely, we say that a labelling $\psi$ of a bipartite graph is \emph{bipartite graceful} if it is graceful and all vertex labels in one colour class $V_1$ are smaller than in the other $V_2$. However, already Rosa found out that the concept is too restrictive in that the bipartite version of Conjecture~\ref{conj:GTLC} does not hold (for example, a complete ternary tree of depth $2$ and with $13$ vertices does not have a bipartite graceful labelling).
However, it seems likely that small modifications to our proof can be used to show that the trees as in Theorem~\ref{thm:main} have approximate bipartite graceful labellings. That is, 
the labelling $\psi$ in Theorem~\ref{thm:main} can be taken such that $\psi(V_1)\subset \{1,\ldots,|V_1|+\frac{\gamma n}2\}$ and $\psi(V_2)\subset \{|V_1|+\frac{\gamma n}2+1,\ldots,n+\gamma n\}$. 

Let us briefly sketch the required modifications. By a similar reduction as that to Theorem~\ref{thm:mainred}, we can assume that $|V_1|,|V_2|\ge\gamma n$ in addition to any divisibility properties we need. We need to redefine $\iJ$ and `complementary interval' to replace the `midpoint' $\tfrac{1+\gamma}{2}n$ in our proof with $|V_1|+\tfrac{\gamma n}{2}$. We then need to alter our procedure of picking the intervals $\iJ(v_i)$, insisting that $\iJ(v_i)$ is always below the midpoint for $v_i\in V_1$, and always above for $v_i\in V_2$, but otherwise following the same random procedure. We will also need to alter the distributions $\Corv$ and $\Core$ appropriately. Finally, we apply Algorithm~\ref{alg:label}, and claim that it succeeds with high probability; doing so it returns an approximate bipartite graceful labelling.

In order to prove this, it is necessary to change~\ref{pre:chJ}, taking into account the ratio of vertices in $V_1$ and in $V_2$ on the interval $S$. Similarly, it is necessary to change the quasirandomness definition~\ref{quasi:X}, again taking into account the ratio of vertices in $V_1$ and $V_2$ up to time $t$. We expect that these modifications to our proof suffice, but these promise significant extra technical complexity, and we have not checked the details.

In light of this sketch, we do not think Theorem~\ref{thm:main} should be seen as strong evidence in favour of the Graceful Tree Conjecture. The above sketch would be similarly strong evidence in favour of the statement that all trees admit a bipartite graceful labelling: which is false.

\subsection{Harmonious labellings}\label{ssec:harmonious}
The concept of \emph{harmonious labellings} is the same as of graceful labellings, except that the formula $|f(x)-f(y)|$ defining the label induced on the edge $xy$ is replaced by  $(f(x)+f(y)) \mod q$, where $q$ is the number of edges of the graph in question. This concept was introduced by Graham and Sloane~\cite{GrahamSloane80} who also put forward the counterpart to Conjecture~\ref{conj:GTLC}.
\begin{conjecture}[Harmonious Tree Conjecture]
\label{conj:Harm}
For any $n$-vertex tree $T$ there exists an injective
labelling $\psi:V(T) \rightarrow [n]$ such that the values 
\begin{equation}\label{eq:harmmod}
\Big(\psi(u)+\psi(v)\mod (n-1)\Big)_{uv\in E(T)}
\end{equation}
are pairwise distinct.
\end{conjecture}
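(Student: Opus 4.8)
\medskip

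The plan is to transplant the whole apparatus of this paper --- the interval assignment of Lemma~\ref{lem:setup}, the randomized sequential labelling of Algorithm~\ref{alg:label}, the quasirandomness bookkeeping, and the concentration argument based on Lemma~\ref{lem:seqhoeff} --- from the ``difference'' world into the ``sum modulo $n-1$'' world, and then to close the gap between an approximate and an exact labelling by an absorbing argument. Recall that a tree $T$ on $n$ vertices has $n-1$ edges, so a harmonious labelling of $T$ is an injection $\psi\colon V(T)\to[n]$ whose induced edge map $uv\mapsto\psi(u)+\psi(v)\bmod(n-1)$ is a \emph{bijection} onto $\mathbb Z_{n-1}$. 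The first observation is that the additive, cyclic setting is in several respects \emph{better behaved}: the set $\bbC$ of available edge labels now lives in the group $\mathbb Z_{n-1}$, and the convolution of two near-uniform variables on $\mathbb Z_{n-1}$ is again near-uniform, so the intrinsic non-uniformity that afflicts $C_t$ in the graceful setting (discussed right after Setup~\ref{setup}, and which forced the introduction of $\Core$) largely disappears. One should re-run the proof of Theorem~\ref{thm:mainred} with $\cIe$ and the structures $\cX$ reinterpreted inside $\mathbb Z_{n-1}$, with the ``complementary'' interval $\overline{\iJ}$ of $\iJ$ chosen so that $\iJ+\overline{\iJ}$ is concentrated on a fixed residue rather than on $\ell(\tilde n+1)$, and with $\Corv,\Core$ replaced by their (much milder) cyclic analogues. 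The same first-failure-time analysis --- mirroring Claims~\ref{cl:sizeAC}, \ref{cl:p}, \ref{cl:q} and the final induction --- should show that $(A_t,C_t)$ stays $\delta_t$-quasirandom with high probability, yielding as a \emph{warm-up} an $m$-harmonious labelling into $[(1+\gamma)n]$ for all trees with $\maxdeg(T)\le\Pc n/\log n$, hence (via Moon's theorem \cite{Moon68}) for almost all trees.

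The genuinely new ingredient required for the exact Conjecture~\ref{conj:Harm} is an \textbf{absorber}. Before running the algorithm, set aside a small subtree $T_{\mathrm{abs}}$ --- most naturally a pendant path, or a caterpillar, of order $\Theta(\gamma n)$, since caterpillars are known to be harmonious and extremely flexible --- together with a reserved pool $R\subset[n]$ of $|V(T_{\mathrm{abs}})|$ vertex labels and a reserved set of residues. One runs the quasirandom algorithm on $T-T_{\mathrm{abs}}$ using only labels outside $R$, and proves that with high probability the still-unused vertex labels afterwards are exactly $R$ except for an ``error set'' of size $o(\gamma^2n)$, and similarly for the residues, \emph{and} that this leftover configuration is arithmetically well-spread. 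The decisive lemma is then a \emph{robust} completion statement: that a path (or caterpillar) admits a harmonious completion realising \emph{every} sufficiently spread-out prescribed set of $|V(T_{\mathrm{abs}})|$ vertex labels together with \emph{every} sufficiently spread-out prescribed set of sums. I expect this to be the single hardest point. In the cyclic setting the path case reduces to a zero-sum / rearrangement question for sequences in $\mathbb Z_{n-1}$, where tools such as the combinatorial Nullstellensatz and Hall--Paige-type rearrangement arguments apply, but a quantitatively strong, ``rainbow''-flavoured version of such results --- labelling a path into an \emph{arbitrary} spread label set with an arbitrary spread sum set --- does not appear to be in the literature and would have to be developed here.

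A secondary obstacle, already flagged in Section~\ref{sec:conclusion}, is that the union bound over the family $\cX$ currently costs a $\log n$ factor and so restricts the warm-up to $\maxdeg(T)\le\Pc n/\log n$, whereas Conjecture~\ref{conj:Harm} is degree-free. To remove this one would combine two absorbers: a ``high-degree absorber'' that first deals by hand with the (few) vertices of degree exceeding $\Pc n/\log n$ --- labelling their neighbourhoods directly from a reserved residue-rich pool, exploiting that in a tree the total such excess is $O(n)$ --- leaving a bounded-degree remainder on which the quasirandom stage is run, and then the path absorber above to finish exactly. The remaining work is to choose the two reserved pools disjointly, to verify that the quasirandom stage indeed delivers a leftover configuration inside the (large, but not all-encompassing) region for which the absorbers are designed, and to check the arithmetic of the mod-$(n-1)$ endgame. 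I do not see a fundamental barrier in these combinations, but the robustness lemma for the path absorber is where essentially all of the genuine difficulty --- and the risk that the programme does not go through --- is concentrated.
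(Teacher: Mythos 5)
This statement is not a theorem of the paper at all: it is the Harmonious Tree Conjecture of Graham and Sloane, which the paper records as Conjecture~\ref{conj:Harm} and explicitly describes as wide open. The paper offers no proof of it; in Section~\ref{ssec:harmonious} the authors only sketch (without checking details) how their machinery might yield an \emph{approximate} analogue of Theorem~\ref{thm:main} for harmonious labellings, i.e.\ a labelling into $[\lceil(1+\gamma)n\rceil]$ with modulus $\tilde n-1$, and only for trees with $\Delta(T)\le \Pc n/\log n$. So there is nothing in the paper for your argument to be measured against except that sketch, and your warm-up paragraph essentially reproduces it (cyclic intervals $\cJ$, no complementary-interval pairing needed, milder correction distributions); that part is plausible but, as in the paper, unverified.

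The genuine gap is that everything beyond the warm-up is a programme, not a proof, and you say so yourself. The exact conjecture requires an injection into $[n]$ (no spare $\gamma n$ labels at all) with the induced sums forming a \emph{bijection} onto $\mathbb Z_{n-1}$, and your route from the approximate statement to this exact one rests entirely on the ``robust completion'' lemma for the path/caterpillar absorber --- realising an arbitrary sufficiently spread prescribed set of leftover vertex labels together with an arbitrary spread prescribed set of residues. You state that no such result exists in the literature and that it ``would have to be developed here,'' but you do not develop it; without it the absorption step is vacuous. The same applies to the removal of the degree restriction: the paper's quasirandomness analysis needs polynomially small failure probabilities (hence the $\log n$ in the degree bound), and your ``high-degree absorber'' is described in one sentence with no mechanism for how the neighbourhoods of high-degree vertices are labelled compatibly with the reserved residue pool. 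In short, the proposal correctly identifies where the difficulty lies, but it does not close it, so it is not a proof of the statement --- and indeed no proof is known; the paper deliberately leaves this as a conjecture.
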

Actually, Conjecture~\ref{conj:Harm} can be generalized to Abelian groups. In that setting, the conjecture says that given an $n$-vertex tree $T$ and an Abelian group $\Gamma$ of order $n$ there exists an injective labelling $\psi:V(T) \rightarrow \Gamma$ such that the values $\psi(u)+\psi(v)$ on the edges $uv\in E(T)$ are pairwise distinct. Firstly, note that the  original Conjecture~\ref{conj:Harm} corresponds to $\Gamma=\bbZ_n$. Secondly, observe that if the generalized conjecture holds for all trees and all Abelian groups of the order exactly as of the tree, then it holds also for all trees and all Abelian groups of the order which is at least the order of the tree. 

Conjecture~\ref{conj:Harm} is, too, open. The strongest result by far, obtained very recently by Montgomery, Pokrovskiy, and Sudakov~\cite{MPS:EmbeddingRainbow}, is an asymptotic solution of the group-theoretic version of Conjecture~\ref{conj:Harm}.
\begin{theorem}\label{thm:MPSHarmonious}
For every $\gamma>0$ there exists $n_0\in \bbN$ such for every $n>n_0$, every $n$-vertex tree $T$ and every Abelian group $\Gamma$ of order at least $(1+\gamma)n$, there exists a map $\psi:V(T)\rightarrow \Gamma$ such that the values $\psi(u)+\psi(v)$ on the edges $uv\in E(T)$ are pairwise distinct.
\end{theorem}
Theorem~\ref{thm:MPSHarmonious} is a quick consequence of results on containment of rainbow trees, which are the main focus of~\cite{MPS:EmbeddingRainbow}. The most notable feature of Theorem~\ref{thm:MPSHarmonious}, compared to our Theorem~\ref{thm:main} as well as the tree packing results mentioned in Section~\ref{ssec:treepackings}, is that there is no upper bound on the maximum degree of $T$.

Since the current paper appeared before~\cite{MPS:EmbeddingRainbow}, and since the methods used in~\cite{MPS:EmbeddingRainbow} are very different, we would like to comment how the tools we introduced here may be used to obtain a counterpart of Theorem~\ref{thm:main} for harmonious labellings. The bound on the maximum degree of the tree $T$ would stay $\tfrac{\Pc n}{\log n}$. The relaxation compared to Conjecture~\ref{conj:Harm} would amount to $\psi$ mapping to $[\tilde n]$, $\tilde n:=\lceil(1+\gamma)n\rceil$, and modulus in~\eqref{eq:harmmod} being $\tilde n-1$.

In order for our analysis of Algorithm~\ref{alg:label} to work, we need the property that the marginal distributions of vertex labels and edge labels are close to uniform throughout the whole process. We obtained this by our careful choice of the sets $\iJ(v)$ for $v\in V(T)$. Once we have this, the remaining analysis does not essentially require gracefulness.

In order to modify our method to work for harmonious labellings we would choose $\cJ$ as follows. We consider $[\tilde{n}]$ with the natural cyclic order, and let $\cJ$ be the collection of intervals of length $\ell-1$ in this order starting at $1$, $m+1$,\dots, $\tilde{n}-m+1$. We would not need to define `complementary interval'. Then, in Lemma~\ref{lem:setup}, we would simply choose $\iJ(v)$ independently and uniformly at random from $\cJ$ for each $v\in V(T)$. It is now obvious that if each $v$ were labelled uniformly in $\iJ(v)$ then the result is a uniform distribution of vertex labels, and easy to check that the distribution of edge labels is also uniform. We expect that from this point one can simply follow the algorithm and analysis given, making the obvious small changes to obtain a harmonious rather than graceful labelling. However we have not checked the details.

Note that in our approach we cut the tree $T$ into small subtrees by removing the edges $\mathcal{R}$. This is not used only to assign the intervals $\iJ(v)$, but also to guarantee that most vertices are labelled shortly after their parent is labelled. The former property is not required for harmonious labelling, but the latter property is still required for the analysis.

\section{Acknowledgment}
Part of the project was done while CG and JH visited Max Planck Institute for Informatics. We thank Michal Adamaszek for his contribution to this project in its initial stages. We thank anonymous referees for their comments. 
We also thank Felix Joos who suggested a substantial simplification of the proof of Lemma~\ref{lem:seqhoeff}, and suggestions from Lutz Warnke in the same direction.

\medskip

The contents of this publication reflects only the authors' views and not necessarily the views of the European Commission of the European Union.


\bibliographystyle{amsplain} \bibliography{bibl}

\end{document}